\DeclareMathOperator{\Var}{Var} \DeclareMathOperator{\dist}{dist}
\DeclareMathOperator{\CR}{CR}
\theoremstyle{plain}
  \newtheorem{proposition}[]{Proposition}
  \newtheorem{lemma}[]{Lemma}
  \newtheorem{theorem}[]{Theorem}
  \newtheorem{corollary}[]{Corollary}
  \newtheorem{remark}[]{Remark}
  \newtheorem{question}[]{Question}
\theoremstyle{definition}
\title{Asymptotics for 2D critical and near-critical first-passage percolation}
\author{Chang-Long Yao}
\thanks{Academy of Mathematics and Systems Science,
CAS, Beijing, China (E-mail: deducemath@126.com)}
\begin{document}
\maketitle

\begin{abstract}
We study Bernoulli first-passage percolation (FPP) on the triangular
lattice in which sites have 0 and 1 passage times with probability
$p$ and $1-p$, respectively.  Denote by $\mathcal {C}_{\infty}$ the
infinite cluster with 0-time sites when $p>p_c$, where $p_c=1/2$ is
the critical probability.  Denote by $T(0,\mathcal {C}_{\infty})$
the passage time from the origin 0 to $\mathcal {C}_{\infty}$. First
we obtain explicit limit theorem for $T(0,\mathcal {C}_{\infty})$ as
$p\searrow p_c$.  The proof relies on the limit theorem in the
critical case, the critical exponent for correlation length and
Kesten's scaling relations.   Next, for the usual point-to-point
passage time $a_{0,n}$ in the critical case, we construct
subsequences of sites with different growth rate along the axis. The
main tool involves the large deviation estimates on the nesting of
CLE$_6$ loops derived by Miller, Watson and Wilson (2016). Finally,
we apply the limit theorem for critical Bernoulli FPP to a random
graph called cluster graph, obtaining explicit strong law of large
numbers for graph distance.

\textbf{Keywords}: percolation; first passage percolation;
correlation length; scaling limit; conformal loop ensemble

\textbf{AMS 2010 Subject Classification}: 60K35, 82B43
\end{abstract}

\section{Introduction}
\subsection{The model}
Standard first-passage percolation (FPP) is defined on the integer
lattice $\mathbb{Z}^d$, where i.i.d. non-negative random variables
are assigned to nearest-neighbor edges.  This setting is called the
bond version of FPP on $\mathbb{Z}^d$.  We refer the reader to the
recent survey \cite{ADH17}.  In this paper, we will focus on the
site version of FPP on the triangular lattice $\mathbb{T}$, since
our main results rely on the existence of the scaling limit of
critical site percolation on $\mathbb{T}$.

The model is defined as follows.  Let
$\mathbb{T}=(\mathbb{V},\mathbb{E})$ denote the triangular lattice
embedded in $\mathbb{C}$, where $\mathbb{V}=\{x+ye^{\pi
i/3}\in\mathbb{C}:x,y\in\mathbb{Z}\}$ is the set of sites, and
$\mathbb{E}$ is the set of bonds, connecting adjacent sites.  Let
$\{t(v):v\in\mathbb{V}\}$ be an i.i.d. family of nonnegative random
variables with common distribution function $F$.   A \textbf{path}
is a sequence $(v_0,\ldots,v_n)$ of distinct sites such that
$v_{i-1}$ and $v_i$ are neighbors for all $i=1,\ldots,n$.  For a
path $\gamma$, we define its passage time as $T(\gamma)=\sum_{v\in
\gamma}t(v).$  The \textbf{first-passage time} between two site sets
$A,B$ is defined as
\begin{equation*}
T(A,B):=\inf \{T(\gamma):\gamma \mbox{ is a path from a site in $A$
to a site in $B$}\}.
\end{equation*}
A \textbf{geodesic} is a path $\gamma$ from $A$ to $B$ such that
$T(\gamma)= T(A,B)$.

Define the point-to-point passage time $a_{0,n}:=T(\{0\},\{n\})$. It
is well known that, based on subadditive ergodic theorem, if
$\mathbf{E}[t(v)]<\infty$, there is a constant $\mu=\mu(F)$ called
the \textbf{time constant}, such that
\begin{equation*}
\lim_{n\rightarrow\infty}\frac{a_{0,n}}{n}=\mu\quad\mbox{a.s. and in
$L^1$}.
\end{equation*}
Kesten (Theorem 6.1 in \cite{Kes86}) showed that
\begin{equation*}
\mu=0\quad\mbox{if and only if}\quad F(0)\geq p_c,
\end{equation*}
where $p_c=1/2$ is the critical probability for Bernoulli site
percolation on $\mathbb{T}$ (see e.g. \cite{Gri99} for general
background on percolation).  So one gets little information from the
time constant when $F(0)\geq p_c$.  When $F(0)=p_c$, we call the
model \textbf{critical FPP} since there is a transition of the time
constant at $p_c$.

In this paper, we shall restrict ourselves to \textbf{Bernoulli FPP}
on $\mathbb{T}$:  For each $p\in [0,1]$, we define the measure
$\mathbf{P}_p$ as the one under which all coordinate functions
$\{t(v):v\in \mathbb{V}\}$ are i.i.d. with
\begin{equation*}
\mathbf{P}_p[t(v)=0]=p=1-\mathbf{P}_p[t(v)=1],
\end{equation*}
and refer to a site $v$ with $t(v)=0$ simply as \textbf{open};
otherwise, \textbf{closed}.  One can view our Bernoulli FPP as
Bernoulli site percolation on $\mathbb{T}$.  We usually represent it
as a random coloring of the faces of the dual hexagonal lattice
$\mathbb{H}$, each face centered at $v\in \mathbb{V}$ being blue
($t(v)=0$) or yellow ($t(v)=1$).  Sometimes we view the site $v$ as
the hexagon in $\mathbb{H}$ centered at $v$.  Denote by $c_n$ the
first-passage time from 0 to a circle of radius $n$ centered at 0.
See Figure \ref{fig1}.  Using conformal loop ensemble CLE$_6$, the
author \cite{Yao18} gave the following limit theorem in the critical
case.
\begin{figure}
\begin{center}
\includegraphics[height=0.35\textwidth]{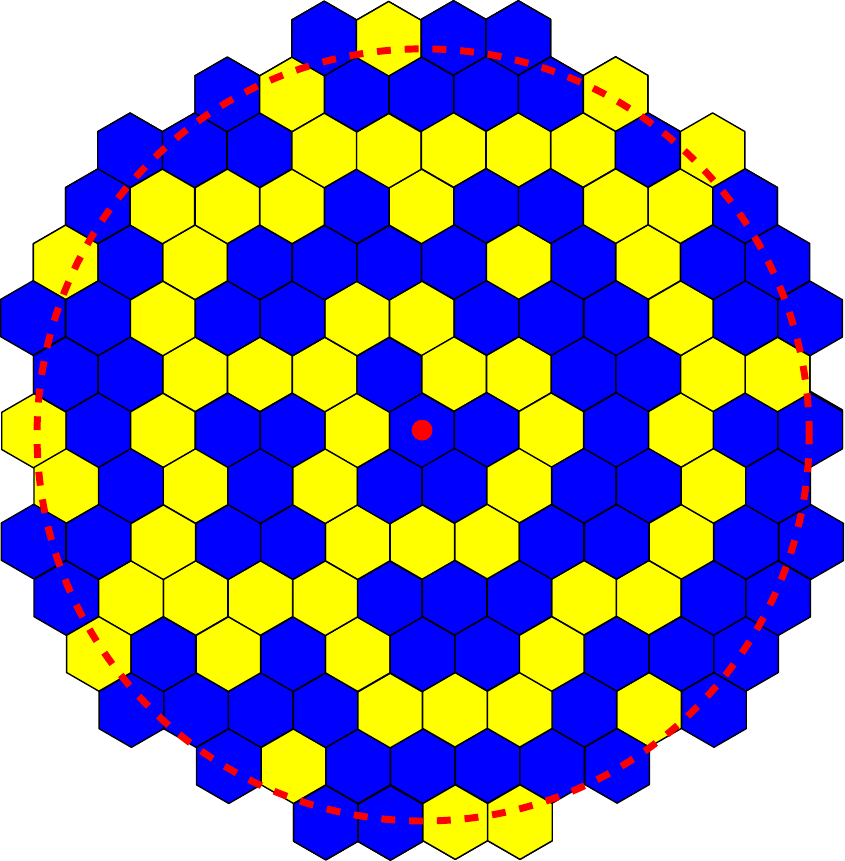}
\caption{Bernoulli site percolation on the triangular lattice
$\mathbb{T}$.  Each hexagon of the hexagonal lattice $\mathbb{H}$
represents a site of $\mathbb{T}$, and is colored blue ($t(v)=0$) or
yellow ($t(v)=1$). Here, the first-passage time
$c_6=2$.}\label{fig1}
\end{center}
\end{figure}

\begin{theorem}[\cite{Yao18}]\label{t4}
Under the critical Bernoulli FPP measure $\mathbf{P}_{1/2}$ on
$\mathbb{T}$,
\begin{align}
&\lim_{n\rightarrow\infty}\frac{c_n}{\log
n}=\frac{1}{2\sqrt{3}\pi}\quad a.s.,\label{t41}\\
&\lim_{n\rightarrow\infty}\frac{\mathbf{E}_{1/2}[c_n]}{\log
n}=\frac{1}{2\sqrt{3}\pi},\label{t42}\\
&\lim_{n\rightarrow\infty}\frac{\Var_{1/2}[c_n]}{\log
n}=\frac{2}{3\sqrt{3}\pi}-\frac{1}{2\pi^2},\label{t43}\\
&\lim_{n\rightarrow\infty}\frac{a_{0,n}}{\log
n}=\frac{1}{\sqrt{3}\pi}\quad\mbox{in probability but not
a.s.}\label{t44}
\end{align}
\end{theorem}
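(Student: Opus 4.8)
The plan is to deduce all four limits from the nesting of CLE$_6$ loops around the origin. Write $c_n(x)$ for the first-passage time from a site $x$ to the circle of radius $n$ about $x$, so that $c_n=c_n(0)$. The first step is planar max-flow/min-cut (Menger) duality: $c_n$ equals the maximal number of vertex-disjoint yellow circuits that surround $0$ and lie inside $B(0,n)$. As one moves outward from $0$, the monochromatic annular regions cut out by successive percolation interfaces alternate in colour, so this number equals $\tfrac12 N_n$ up to an $o(\log n)$ error --- from $\pm O(1)$ boundary effects and from ``thick'' yellow annuli which a geodesic must cross at cost exceeding one --- where $N_n$ is the number of interface loops around $0$ contained in $B(0,n)$. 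I would then invoke the convergence of critical site percolation on $\mathbb{T}$ to CLE$_6$ (Smirnov; Camia--Newman): for fixed $\delta>0$ the loops contributing to $N_n$ at scales $[\delta n,n]$ converge to the CLE$_6$ loops around $0$ in $\{\delta\le|z|\le 1\}$; letting $n\to\infty$ and then $\delta\to0$ and feeding in the Miller--Watson--Wilson results on CLE$_6$ nesting --- almost sure convergence of the normalized loop count, exact first- and second-moment asymptotics, and exponential large-deviation estimates --- gives \eqref{t42} and \eqref{t43} after inserting the $\kappa=6$ constants (so $\tfrac12 N_n\sim\tfrac1{2\sqrt3\pi}\log n$, and $\Var(c_n)\sim\tfrac14\Var(N_n)$), while a Borel--Cantelli argument along $n=2^{k}$ using the large-deviation bounds, combined with the monotonicity of $k\mapsto N_{2^{k}}$ and $n\mapsto c_n$, upgrades this to the a.s.\ statement \eqref{t41}.

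For \eqref{t44} the convergence in probability follows by squeezing $a_{0,n}$ between two essentially independent ``escape'' times. For the lower bound, a geodesic from $0$ to $n$ must cross every yellow circuit around $0$ lying in $B(0,n/3)$ and every yellow circuit around $n$ lying in $B(n,n/3)$, and these two families are vertex-disjoint (the discs being disjoint), so $a_{0,n}\ge c_{n/3}(0)+c_{n/3}(n)$ deterministically; each term is $\ge(\tfrac1{2\sqrt3\pi}-\epsilon)\log n$ with probability tending to $1$ by \eqref{t41} and translation invariance. For the upper bound, with probability tending to $1$ there is an open (zero-time) circuit surrounding $B(0,2n)$ at scale at most $n\log n$; routing from $0$ to that circuit, freely along it, and then in to $n$ gives $a_{0,n}\le c_{n\log n}(0)+c_{2n\log n}(n)\le(\tfrac1{\sqrt3\pi}+\epsilon)\log n$ with high probability, since $\log(n\log n)=\log n+o(\log n)$. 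Hence $a_{0,n}/\log n\to\tfrac1{\sqrt3\pi}$ in probability.

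To see that the convergence is not a.s.\ I would show $\liminf_n a_{0,n}/\log n<\tfrac1{\sqrt3\pi}$ almost surely. Fix $\theta\in(0,1)$ close to $1$ and a polynomially growing sequence $n_k=k^{A}$, with $A$ chosen (according to $\theta$) so that the discs $B(n_k,n_k^{\theta})$ are pairwise disjoint while $\sum_k n_k^{-5\theta/48}=\infty$. By the one-arm exponent $5/48$, the events ``$n_k$ is joined to $\partial B(n_k,n_k^{\theta})$ by an open path'' are then independent with non-summable probabilities, so almost surely infinitely many of them occur. On such an event there is no yellow circuit around $n_k$ at scales below $n_k^{\theta}$, so a near-optimal path from $0$ to $n_k$ need pay only for the yellow circuits around $0$ up to scale $\sim n_k$ --- which is $\tfrac1{2\sqrt3\pi}\log n_k+o(\log n_k)$ a.s.\ by \eqref{t41} --- and for the number $W_k$ of yellow circuits around $n_k$ at scales $[n_k^{\theta},\sim n_k]$, whose mean is $\tfrac{1-\theta}{2\sqrt3\pi}\log n_k$; the total is then $\tfrac{2-\theta}{2\sqrt3\pi}\log n_k<\tfrac1{\sqrt3\pi}\log n_k$ unless $W_k$ exceeds its mean by a fixed positive fraction of $\log n_k$. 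Since such an overshoot is by a fixed multiple of the mean of $W_k$, the MWW large-deviation estimate bounds its probability by $n_k^{-c}$ with $\sum_k n_k^{-c}<\infty$ for a suitable choice of $\theta$ and $A$; hence almost surely only finitely many $k$ have $W_k$ that large, and infinitely many $k$ satisfy $a_{0,n_k}\le(\tfrac1{\sqrt3\pi}-\eta)\log n_k$ for a fixed $\eta>0$. Combined with the in-probability limit $\tfrac1{\sqrt3\pi}$, this rules out a.s.\ convergence.

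The principal obstacle, shared by all parts, is the passage from the continuum CLE$_6$ statements to the discrete circuit counts uniformly over all scales up to $n$: one needs RSW estimates, quasi-multiplicativity of arm events and the BK inequality to show that $N_n$ --- together with its mean, its variance and its large deviations --- tracks the CLE$_6$ prediction within $o(\log n)$ (a.s.\ for \eqref{t41}), and that the discretization defect $c_n-\tfrac12 N_n$ coming from ``thick'' monochromatic annuli is negligible at every order. For \eqref{t43} one must in addition extract the exact second-order CLE$_6$ nesting constant from the Miller--Watson--Wilson analysis; and in \eqref{t44} the corresponding delicate point is to turn the absence of small-scale yellow circuits around $n_k$ into an explicit path realizing the reduced cost, while balancing the exponents $A,\theta,\eta$ so that the arm events stay non-summable yet the error term $W_k$ remains controlled by a summable large-deviation bound.
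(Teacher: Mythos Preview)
This theorem is quoted from \cite{Yao18} rather than proved in the present paper, but the paper does record the key combinatorial input (Proposition~\ref{p3}(iii)): under a monochromatic boundary condition, the passage time $T(\partial B(r),\partial B(R))=\rho(r,R)$ has \emph{the same distribution} as the interface-loop count $N(r,R)$. This equality in law is produced by a colour-switching bijection (sketched in the proof of Lemma~\ref{ll2}), and it is precisely what makes the CLE$_6$ nesting constants transfer directly to $c_n$.

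Your proposal replaces this distributional identity by a pointwise approximation $c_n\approx\tfrac12 N_n$, arguing that the monochromatic annuli alternate in colour and that ``thick'' yellow annuli contribute only $o(\log n)$. That step fails: the thick-annulus contribution is of order $\log n$, not $o(\log n)$. Indeed, if $\rho\approx N/2+o(\log n)$ held pointwise, combining it with $\rho\stackrel{d}{=}N$ would force $\mathbf{E}N\approx\mathbf{E}N/2$, which is absurd once $\mathbf{E}N\asymp\log n$. Concretely, the Schramm--Sheffield--Wilson formula gives $\mathbb{E}[Z_1]=\Lambda'(0)=2\sqrt3\pi$, so the CLE$_6$ nesting density equals $\tfrac{1}{2\sqrt3\pi}$, not $\tfrac{1}{\sqrt3\pi}$; plugging this into your halving would yield $c_n/\log n\to\tfrac{1}{4\sqrt3\pi}$, and your $\Var(c_n)\sim\tfrac14\Var(N_n)$ would miss \eqref{t43} by a factor of four. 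The remainder of your outline --- the max-flow/min-cut identification, the passage to CLE$_6$ at fixed scale ratios, the sandwich bounds for $a_{0,n}$, and the one-arm/large-deviation construction of an atypical subsequence --- is in the right spirit and close to what is done in \cite{Yao14,Yao18}; but the engine producing the explicit constants is the colour-switching identity $\rho\stackrel{d}{=}N$, not a pointwise halving of the loop count.
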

Let us mention that an analogous theorem for critical Bernoulli FPP
starting on the boundary was established in \cite{JY16}.  In
\cite{DLW17,KZ97}, the authors studied asymptotics for general
planar critical FPP.  From Theorem 1.6 in \cite{DLW17} (see also
(1.13) in \cite{KZ97}), we know that
\begin{equation*}
\frac{c_n-\mathbf{E}_{1/2}[c_n]}{\sqrt{\Var_{1/2}[c_n]}}\stackrel{d}\longrightarrow
N(0,1)\quad\mbox{as }n\rightarrow\infty.
\end{equation*}
This combined with (\ref{t42}) and (\ref{t43}) implies that there
exists a function $\delta(n)$ with $\delta(n)\rightarrow 0$ as
$n\rightarrow \infty$, such that
\begin{equation}\label{t46}
\frac{c_n-(1+\delta(n))\log
n/(2\sqrt{3}\pi)}{\sqrt{\left(2/(3\sqrt{3}\pi)-1/(2\pi^2)\right)\log
n}}\stackrel{d}\longrightarrow N(0,1)\quad\mbox{as
}n\rightarrow\infty.
\end{equation}
We conjecture, but can not prove, that one may choose $\delta\equiv
0$ in (\ref{t46}).  Let us point out that the explicit form of the
CLT in Corollary 1.2 of \cite{Yao18} should be replaced with a
similar weaker form.

In the present paper, we continue our study of Bernoulli FPP on
$\mathbb{T}$ from \cite{Yao18}.  The main purpose is threefold: to
derive exact asymptotics for Bernoulli FPP as $p\searrow p_c$, to
construct different subsequential limits for $a_{0,n}$ when $p=p_c$,
to obtain strong law of large numbers for a natural random graph by
applying the result for critical Bernoulli FPP.  See the next
subsection for details.

Throughout this paper, $C$ or $C_i$ stands for a positive constant
that may change from line to line according to the context.

\subsection{Main results}
Before stating our main results, we give some basic notation.  For
$r>0$, let $\mathbb{D}(r)$ denote the Euclidean disc of radius $r$
centered at 0 and $\partial\mathbb{D}(r)$ denote the boundary of
$\mathbb{D}(r)$.  Write $\mathbb{D}:=\mathbb{D}(1)$.  For $v\in
\mathbb{V}$, let $B(v,r)$ denote the set of hexagons of
$\mathbb{\mathbb{H}}$ that are contained in $v+\mathbb{D}(r)$.  We
will sometimes see $B(v,r)$ as a union of these closed hexagons. For
$B(v,r)$, denote by $\partial B(v,r)$ its topological boundary.
Write $B(r):=B(0,r)$ and $T(0,\partial
B(r)):=T(0,\mathbb{C}\backslash B(r))$.

Recall the standard coupling of the percolation measures
$\mathbf{P}_p,0\leq p\leq 1$: Take i.i.d. random variables $U_v$ for
each site $v$ of $\mathbb{V}$, with $U_v$ uniformly distributed on
$[0,1]$.  We denote the underlying probability measure by
$\mathbf{P}$, the corresponding expectation by $\mathbf{E}$, and the
space of configurations by $([0,1]^{\mathbb{V}},\mathscr{F})$, where
$\mathscr{F}$ is the cylinder $\sigma$-field on
$[0,1]^{\mathbb{V}}$.  For each $p$, we obtain the measure
$\mathbf{P}_p$ by declaring each site $v$ to be $p$-open ($t(v)=0$)
if $U_v\leq p$, and $p$-closed ($t(v)=1$) otherwise.  Let
$\mathbf{E}_p$ denote the expectation with respect to
$\mathbf{P}_p$.  It is well known that almost surely for each
$p>1/2$, there is a unique infinite open cluster, denoted by
$\mathcal {C}_{\infty}=\mathcal {C}_{\infty}(p)$.  Under the
coupling measure $\mathbf{P}$, denote by $T_p(\cdot,\cdot)$ the
first-passage time between two site sets with respect to the
parameter $p$.

Let $L(p)$ denote the correlation length that will be defined in
Section \ref{pre}.

We use the standard notation to express that two quantities are
asymptotically equivalent.  Given two positive functions $f$ and
$g$, we write $f\asymp g$ if there are constants $0<C_1<C_2<\infty$
such that $C_1g\leq f\leq C_2g$.

\subsubsection{Near-critical behavior: supercritical phase} The
following theorem roughly says that $T_p(0,\mathcal
{C}_{\infty}(p))$ is well-approximated by $T_{1/2}(0,\partial
B(L(p)))$ under the coupling measure $\mathbf{P}$.
\begin{theorem}\label{t3}
There exists some absolute constant $C>0$, such that for all
$p>1/2$,
\begin{align}
&\mathbf{E}|T_p(0,\mathcal {C}_{\infty}(p))-T_{1/2}(0,\partial
B(L(p)))|\leq
C,\label{t31}\\
&|\Var_p[T(0,\mathcal {C}_{\infty})]-\Var_{1/2}[T(0,\partial
B(L(p)))]|\leq C.\label{t35}
\end{align}
\end{theorem}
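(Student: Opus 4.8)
The plan is to realize $T_p(0,\mathcal C_\infty(p))$ and $T_{1/2}(0,\partial B(L(p)))$ on the same probability space (the standard coupling $\mathbf P$) and to control their difference by a single $L^1$-bounded random variable, after which both \eqref{t31} and the variance estimate \eqref{t35} follow. Write $L=L(p)$ for brevity. The first step is the easy direction: since $\{U_v\le 1/2\}\subseteq\{U_v\le p\}$ for $p>1/2$, every $1/2$-open site is $p$-open, so a $p$-geodesic uses at most as many closed sites as the corresponding $1/2$-path; more importantly, there is a standard decoupling of the passage time at scale $L$. The key geometric input is that, by the definition of the correlation length $L(p)$, up to scale $L$ the parameter-$p$ configuration looks critical: with probability bounded below there is a $p$-open circuit in a fixed annulus at each dyadic scale $\le L$, and beyond scale $L$ the infinite $p$-cluster $\mathcal C_\infty(p)$ is ``everywhere dense'' in the sense that any box of side $\asymp L$ meets it. I would make this precise as the two bounds
\begin{align}
T_p(0,\mathcal C_\infty(p)) &\le T_{1/2}(0,\partial B(L)) + X_1,\label{ub}\\
T_p(0,\mathcal C_\infty(p)) &\ge T_{1/2}(0,\partial B(L)) - X_2,\label{lb}
\end{align}
where $X_1,X_2\ge 0$ are random variables with $\mathbf E[X_1],\mathbf E[X_2]\le C$ and, in fact, exponential tails uniformly in $p>1/2$.

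For the upper bound \eqref{ub}: run a $1/2$-geodesic $\gamma$ from $0$ to $\partial B(L)$; it is also a $p$-path, of $p$-passage time $\le T_{1/2}(0,\partial B(L))$. From its endpoint on $\partial B(L)$ we must reach $\mathcal C_\infty(p)$. By the a.s. existence and density of the infinite $p$-cluster and the fact that scale $L$ is exactly where crossing probabilities stop being bounded away from $0$ and $1$, the extra $p$-passage time to connect $\partial B(L)$ to $\mathcal C_\infty(p)$ is a random variable $X_1$ whose law is tight uniformly in $p$; concretely one builds, at the successively larger dyadic scales $2^k L$, $p$-open circuits surrounding the origin, and the infinite cluster is reached once one of these circuits is connected outward, which happens geometrically fast. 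Counting the closed sites traversed before success gives the exponential tail of $X_1$. For the lower bound \eqref{lb}: take a $p$-geodesic $\gamma_p$ from $0$ to $\mathcal C_\infty(p)$. If $\gamma_p$ already exits $B(L)$, then $T_p(\gamma_p)\ge T_p(\gamma_p\cap B(L))\ge T_{1/2}(0,\partial B(L)) - (\text{number of sites in }\gamma_p\cap B(L)\text{ that are }p\text{-open but }1/2\text{-closed})$, and the latter count is dominated by the number of sites $v$ in the lowest-crossing-type path with $U_v\in(1/2,p]$, which has bounded expectation by Kesten's scaling relation $p-p_c\asymp L^{-2}\pi_4(L)$ together with the expected size $\asymp L^2\pi_4(L)$ of a geodesic restricted to $B(L)$ — the product is $O(1)$. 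If instead $\gamma_p$ stays inside $B(L)$, then $\mathcal C_\infty(p)$ reaches inside $B(L)$, which again costs only an $O(1)$-in-expectation correction to relate $T_p(0,\mathcal C_\infty(p))$ to $T_{1/2}(0,\partial B(L))$ via a fixed annulus argument.

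Once \eqref{ub}–\eqref{lb} hold with $\mathbf E[X_1+X_2]\le C$, \eqref{t31} is immediate. For the variance bound \eqref{t35}, write $A:=T_p(0,\mathcal C_\infty(p))$ and $B:=T_{1/2}(0,\partial B(L))$, so $|A-B|\le X:=X_1+X_2$ with $\mathbf E[X]\le C$ and (from the exponential tails) $\mathbf E[X^2]\le C$. Then
\begin{align*}
|\Var(A)-\Var(B)|
&=|\mathbf E[(A-B)(A+B-\mathbf E A-\mathbf E B)]|\\
&\le \bigl(\mathbf E[X^2]\bigr)^{1/2}\bigl(\Var(A)^{1/2}+\Var(B)^{1/2}\bigr)\cdot 2,
\end{align*}
which is not yet a bound by an absolute constant, so instead I would argue directly: $\Var(A)=\Var(A-B)+2\,\mathrm{Cov}(A-B,B)+\Var(B)$, and bound $|\Var(A)-\Var(B)|\le \mathbf E[(A-B)^2]+2\sqrt{\mathbf E[(A-B)^2]\,\Var(B)}$. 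This still involves $\Var(B)$, so the genuinely necessary extra ingredient is an \emph{a priori} comparison of $\Var(B)$ and $\Var(A)$ on the same scale — but here I can invoke Theorem~\ref{t4}, specifically \eqref{t43}, which gives $\Var_{1/2}[c_m]\asymp\log m$, hence (by the same arguments relating $B$ to $c_L$ up to $O(1)$) $\Var(B)\asymp\log L$, and likewise an upper bound $\Var(A)\le C\log L$. That would make the cross term $O(\sqrt{\log L})$, which is \emph{not} $O(1)$. So the correct route is finer: one shows $A-B$ is not merely $L^1$-small but is \emph{asymptotically independent of the bulk fluctuations}, or more simply that $A-B$ has bounded variance \emph{and} bounded covariance with $B$ because the correction term $X$ is measurable with respect to the configuration \emph{outside} $B(L/2)$ (for $X_1$) or inside a bounded neighborhood of the geodesic's exit point (for $X_2$), regions on which $B$'s dependence is weak. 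The main obstacle is exactly this: upgrading the $L^1$ closeness of $A$ and $B$ to closeness of variances requires either a bounded-variance bound on the discrepancy \emph{and} a decorrelation statement, and establishing that decorrelation — that the ``near-critical correction at the boundary scale $L$'' contributes only $O(1)$ to the covariance with the $\Theta(\log L)$-fluctuating bulk — is the technically delicate step, handled via the BK/FKG-type inequalities and the fact that $c_n$'s fluctuations come from $\Theta(\log n)$ roughly independent annular increments, only $O(1)$ of which overlap the correction region.
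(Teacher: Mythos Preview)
Your outline for \eqref{t31} matches the paper's decomposition \eqref{e74} into (i) $T_{1/2}(0,\partial B(L))-T_p(0,\partial B(L))$ and (ii) $T_p(0,\mathcal C_\infty)-T_p(0,\partial B(L))$, and your treatment of (ii) via $X_1$ is essentially Lemma~\ref{l8}. But your argument for (i) via $X_2$ does not work as written. You bound $T_{1/2}-T_p$ by the number of sites on a $p$-geodesic $\gamma_p$ with $U_v\in(1/2,p]$; the expectation of that count is $\asymp (p-\tfrac12)\,\mathbf E|\gamma_p|$, and since any path from $0$ to $\partial B(L)$ has length at least $L$ while $p-\tfrac12\asymp L^{-3/4}$ by \eqref{e14}, this already gives $\gtrsim L^{1/4}\to\infty$. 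Your claimed ``expected size $\asymp L^2\pi_4(L)\asymp L^{3/4}$ of a geodesic'' is false --- that quantity is the expected number of \emph{pivotal} sites, not the geodesic length. The paper instead applies Russo's formula directly (Lemma~\ref{l7}): $\mathbf E_{1/2}[T(p)]-\mathbf E_p[T(p)]=\int_{1/2}^p\sum_v\mathbf P_h[\mathcal E_v]\,dh$, bounds each $\mathbf P_h[\mathcal E_v]$ by $C\,\mathbf P_{1/2}[\mathcal A_4(1,|v|)]$ (using that a pivotal site forces four alternating arms, with a monochromatic-versus-polychromatic comparison from Lemma~\ref{l4} to handle the outer scales), and sums to get $(p-\tfrac12)L^2\mathbf P_{1/2}[\mathcal A_4(1,L)]\asymp 1$ via Kesten's relation \eqref{e8}.

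For \eqref{t35} you correctly spot that Cauchy--Schwarz loses a factor $\sqrt{\log L}$ and that a decorrelation input is required, but the mechanism you propose fails: the discrepancy $T_{1/2}(0,\partial B(L))-T_p(0,\partial B(L))$ is \emph{not} localized near $\partial B(L)$ --- the pivotal flips contributing to it are spread throughout $B(L)$, with order one of them in each dyadic annulus, so they overlap precisely the annuli that carry the $\Theta(\log L)$ fluctuation of $B$. Your assertion of exponential tails for $X_2$ is likewise unsupported (and, given the above, $X_2$ as you defined it has divergent mean). The paper takes a different route: it applies Russo's formula to $\Var_h[T(p)]$ itself (Lemma~\ref{l11}), reducing the derivative to covariance-type terms $\mathbf E_h[\mathbf 1_{\mathcal E_v}T(p)]-\mathbf P_h[\mathcal E_v]\mathbf E_h[T(p)]$, and then proves the key pointwise decorrelation estimate \eqref{e25}, namely that this difference is $O(\mathbf P_{1/2}[\mathcal A_4(1,|v|)])$, by conditioning on the nearest open circuits on either side of $v$ and exploiting the independence this creates. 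The remaining comparison between $\Var_p[T(0,\partial B(L))]$ and $\Var_p[T(0,\mathcal C_\infty)]$ is handled by a separate martingale-increment decomposition (Lemmas~\ref{l15} and~\ref{l16}). Neither ingredient is present in your sketch.
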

Assume $p>1/2$, it is easy to show that
\begin{equation*}
\lim_{n\rightarrow \infty}c_n=T(0,\mathcal
{C}_{\infty})\quad\mbox{$\mathbf{P}_p$-a.s.},~~\lim_{n\rightarrow
\infty}\mathbf{E}_p[c_n]=\mathbf{E}_p[T(0,\mathcal
{C}_{\infty})],~~\lim_{n\rightarrow
\infty}\mathbf{E}_p[a_{0,n}]=2\mathbf{E}_p[T(0,\mathcal
{C}_{\infty})].
\end{equation*}
Zhang \cite{Zha95} proved analogous result for general supercritical
FPP on $\mathbb{Z}^d$ with $F(0)>p_c$.  Using Theorem \ref{t4} and
Theorem \ref{t3}, we obtain exact asymptotics for $T_p(0,\mathcal
{C}_{\infty}(p))$ as $p\searrow 1/2$.
\begin{corollary}\label{c1}
Suppose $p>1/2$.  We have
\begin{align}
&\lim_{p\searrow 1/2}\frac{T_p(0,\mathcal
{C}_{\infty}(p))}{-\frac{4}{3}\log (p-1/2)}=
\frac{1}{2\sqrt{3}\pi}\quad\mbox{$\mathbf{P}$-a.s.,}\label{t32}\\
&\lim_{p\searrow 1/2}\frac{\mathbf{E}_p[T(0,\mathcal
{C}_{\infty})]}{-\frac{4}{3}\log
(p-1/2)}=\frac{1}{2\sqrt{3}\pi},\label{t33}\\
&\lim_{p\searrow 1/2}\frac{\Var_p[T(0,\mathcal
{C}_{\infty})]}{-\frac{4}{3}\log
(p-1/2)}=\frac{2}{3\sqrt{3}\pi}-\frac{1}{2\pi^2}.\label{t34}
\end{align}
Furthermore, there exists a function $\eta(p)$ with
$\eta(p)\rightarrow 0$ as $p\searrow 1/2$, such that
\begin{equation}\label{t36}
\frac{T_p(0,\mathcal
{C}_{\infty}(p))+(1+\eta(p))\frac{2}{3\sqrt{3}\pi}\log(p-1/2)}{\sqrt{\left(\frac{1}{2\pi^2}-\frac{2}{3\sqrt{3}\pi}\right)\frac{4}{3}\log
(p-1/2)}}\stackrel{d}\longrightarrow N(0,1)\quad\mbox{as $p\searrow
1/2$}.
\end{equation}
\end{corollary}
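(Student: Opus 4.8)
The plan is to push Theorems~\ref{t4} and~\ref{t3} through the near-critical scaling of the correlation length. I would start from two deterministic ingredients supplied by Section~\ref{pre}: the scaling relation
\[
\log L(p)\sim-\tfrac{4}{3}\log(p-1/2)\qquad(p\searrow 1/2),
\]
which is exactly where the constant $-\tfrac43$ in the corollary comes from (the value $4/3$ of the correlation-length exponent, together with Kesten's scaling relations), and the elementary uniform increment bounds $\mathbf{E}_{1/2}[c_{n+1}-c_n]\le C$ and $\mathbf{E}_{1/2}[(c_{n+1}-c_n)^2]\le C$, obtained by extending a geodesic for $c_n$ radially outward across the $O(1)$ hexagons separating $B(n)$ from $B(n+1)$. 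Because $r\mapsto T_{1/2}(0,\partial B(r))$ and $n\mapsto c_n$ are nondecreasing, one has $c_{\lfloor r\rfloor}\le T_{1/2}(0,\partial B(r))\le c_{\lceil r\rceil}$, and since $\log\lfloor r\rfloor\sim\log\lceil r\rceil\sim\log r$, the statements \eqref{t41} and \eqref{t42} upgrade to
\[
\frac{T_{1/2}(0,\partial B(r))}{\log r}\;\xrightarrow[r\to\infty]{\;\mathrm{a.s.}\;}\;\frac{1}{2\sqrt3\pi},\qquad
\frac{\mathbf{E}_{1/2}[T(0,\partial B(r))]}{\log r}\;\xrightarrow[r\to\infty]{}\;\frac{1}{2\sqrt3\pi};
\]
and, writing $T_{1/2}(0,\partial B(r))=c_{\lfloor r\rfloor}+Y$ with $0\le Y\le c_{\lceil r\rceil}-c_{\lfloor r\rfloor}$ and using $|\Var[c_n+Y]-\Var[c_n]|\le 2\sqrt{\Var[c_n]\,\mathbf{E}[Y^2]}+\mathbf{E}[Y^2]$ together with \eqref{t43} and the increment bounds, also $\Var_{1/2}[T(0,\partial B(r))]/\log r\to \tfrac{2}{3\sqrt3\pi}-\tfrac{1}{2\pi^2}$ as $r\to\infty$.

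Evaluating the last two limits at $r=L(p)$ and inserting the scaling relation gives $\mathbf{E}_{1/2}[T(0,\partial B(L(p)))]\sim\tfrac{1}{2\sqrt3\pi}\big(-\tfrac43\log(p-1/2)\big)$ and $\Var_{1/2}[T(0,\partial B(L(p)))]\sim\big(\tfrac{2}{3\sqrt3\pi}-\tfrac{1}{2\pi^2}\big)\big(-\tfrac43\log(p-1/2)\big)$. Since \eqref{t31} and \eqref{t35} bound $|\mathbf{E}_p[T(0,\mathcal C_\infty)]-\mathbf{E}_{1/2}[T(0,\partial B(L(p)))]|$ and $|\Var_p[T(0,\mathcal C_\infty)]-\Var_{1/2}[T(0,\partial B(L(p)))]|$ by an absolute constant, while the denominator $-\tfrac43\log(p-1/2)\to\infty$, the expectation and variance statements \eqref{t33} and \eqref{t34} follow immediately.

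The almost sure statement \eqref{t32} is the main obstacle, because \eqref{t31} only controls the difference in $L^1$. The extra structural input I would exploit is a monotonicity in $p$: under the coupling $\mathbf P$, for $p'<p$ every $p'$-open site is $p$-open, so $\mathcal C_\infty(p')\subseteq\mathcal C_\infty(p)$, and $T_{p'}\ge T_p$ on every path; hence $p\mapsto T_p(0,\mathcal C_\infty(p))$ is nonincreasing. I would then choose the subsequence $p_k:=1/2+e^{-k^2}$, for which $-\log(p_k-1/2)=k^2$. By \eqref{t31} and Markov's inequality, $\mathbf P\big(|T_{p_k}(0,\mathcal C_\infty(p_k))-T_{1/2}(0,\partial B(L(p_k)))|\ge\varepsilon k^2\big)\le C/(\varepsilon k^2)$, which is summable, so Borel--Cantelli gives that this difference is $o(k^2)$ a.s.; combined with the a.s. real-variable limit above at $r=L(p_k)\to\infty$ and the scaling relation, $T_{p_k}(0,\mathcal C_\infty(p_k))\big/\big(-\tfrac43\log(p_k-1/2)\big)\to\tfrac1{2\sqrt3\pi}$ a.s. Finally, since $-\log(p_{k+1}-1/2)\big/\big(-\log(p_k-1/2)\big)=(k+1)^2/k^2\to1$, the monotonicity lets me sandwich $T_p(0,\mathcal C_\infty(p))$ for $p\in[p_{k+1},p_k]$ between the two consecutive subsequence values, suitably renormalized, both of which tend to $\tfrac1{2\sqrt3\pi}$; this yields \eqref{t32} for $p\searrow1/2$ through the continuum.

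For the central limit theorem \eqref{t36}, set $n(p):=\lfloor L(p)\rfloor\to\infty$ and apply \eqref{t46} along $n=n(p)$. Writing $S_p:=T_p(0,\mathcal C_\infty(p))$, one has $S_p-c_{n(p)}=[S_p-T_{1/2}(0,\partial B(L(p)))]+[T_{1/2}(0,\partial B(L(p)))-c_{n(p)}]$, whose two brackets are $O(1)$ in $L^1$ by \eqref{t31} and by $0\le T_{1/2}(0,\partial B(L(p)))-c_{n(p)}\le c_{n(p)+1}-c_{n(p)}$ respectively. Dividing by $\sqrt{(2/(3\sqrt3\pi)-1/(2\pi^2))\log n(p)}\to\infty$ and using Slutsky's theorem, $S_p$ obeys the same CLT as $c_{n(p)}$ with centering $(1+\delta(n(p)))\log n(p)/(2\sqrt3\pi)$ and that same normalization. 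To match \eqref{t36} I note that the ratio of normalizations is $\sqrt{\log n(p)\big/\big(-\tfrac43\log(p-1/2)\big)}\to1$ by the scaling relation, and I set $1+\eta(p):=(1+\delta(n(p)))\log n(p)\big/\big(-\tfrac43\log(p-1/2)\big)\to1$ so that the two centerings coincide; a final application of Slutsky then gives \eqref{t36}. The only genuinely model-specific fact used anywhere is the correlation-length exponent behind $\log L(p)\sim-\tfrac43\log(p-1/2)$, taken from Section~\ref{pre}; the remaining delicate point is the Borel--Cantelli/monotone-sandwich argument needed to reach the almost sure conclusion \eqref{t32} from the purely $L^1$ control in Theorem~\ref{t3}.
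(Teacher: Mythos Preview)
Your proposal is correct and follows essentially the same route as the paper: combine the correlation-length exponent \eqref{e14} with Theorems~\ref{t4} and~\ref{t3} for \eqref{t33}--\eqref{t34}, run a Borel--Cantelli plus monotone-sandwich argument along a sparse subsequence of $p$'s for \eqref{t32}, and use \eqref{t46} together with Slutsky for \eqref{t36}. The paper's only cosmetic difference is the subsequence spacing: it takes $-\log(p_k-1/2)=k^4C^2$ with Markov threshold $k^2C$, so that the ratio $k^2C/(k^4C^2)\to 0$ directly, whereas your choice $-\log(p_k-1/2)=k^2$ with threshold $\varepsilon k^2$ requires intersecting over countably many $\varepsilon$ to conclude $o(k^2)$.
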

\begin{remark}
It is natural to ask what will happen when $p\nearrow 1/2$ for the
subcritical Bernoulli FPP.  Denote by $\mu(p)$ the corresponding
time constant.  Chayes, Chayes and Durrett \cite{CCD86} proved that
$\mu(p)\asymp 1/L(p)$. This result together with (\ref{e14}) implies
that $\mu(p)=(1/2-p)^{4/3+o(1)}$ as $p\nearrow 1/2$.  Denote by
$\mathcal {B}(p)$ the limit shape in the classical ``shape theorem"
(see e.g. Section 2 in \cite{ADH17}).  It will be proved in
\cite{Yao18+} that $(1/L(p))\mathcal {B}(p)$ converges to a
Euclidean disk as $p\nearrow 1/2$.  The proof relies on the scaling
limit of near-critical percolation constructed by Garban, Pete and
Schramm \cite{GPS18}.
\end{remark}

\subsubsection{Subsequential limits for critical Bernoulli
FPP}\label{subseqmain} Notice that in (\ref{t44}), we have
convergence of $a_{0,n}/\log n$ only in probability, not almost
surely.  In fact, to show that the convergence does not occur almost
surely, in the proof of Theorem 1.1 in \cite{Yao14}, we constructed
a random subsequence that converges to one half of the typical
limiting value almost surely. Using large deviation estimates on the
nesting of CLE$_6$ loops derived by Miller, Watson and Wilson
\cite{MWW16}, we get the following theorem. Loosely speaking, it
says that one can find many subsequences of sites with different
growth rate, growing unusually quickly or slowly.
\begin{theorem}\label{t1}
Let $\nu_1$ be the constant defined in Section \ref{subseq}.
$\mathbf{P}_{1/2}$-almost surely, for each $\nu\in[0,\nu_1]$, there
exists a random subsequence $\{n_i:i\geq 1\}$ depending on $\nu$,
such that
\begin{equation}\label{t12}
\lim_{i\rightarrow\infty}\frac{a_{0,n_i}}{\log
n_i}=\frac{1}{2\sqrt{3}\pi}+\nu.
\end{equation}
\end{theorem}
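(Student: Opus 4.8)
The plan is to reduce (\ref{t12}) to a statement about the nesting of closed (yellow) circuits around the far endpoint $n$, and then to feed the large‑deviation estimates of \cite{MWW16} into a second‑moment/many‑locations argument carried out along the coordinate axis.

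\emph{Step 1: a decomposition of $a_{0,n}$.} By the max‑flow/min‑cut (Menger/LP) duality for planar site percolation, $a_{0,n}$ equals the maximal number of pairwise (vertex‑)disjoint yellow circuits separating $0$ from $n$. Each such circuit either surrounds $0$ but not $n$ or surrounds $n$ but not $0$, and in either case has Euclidean diameter $\lesssim n$; the ``small'' ones (radius $\le n/3$) around $0$ and around $n$ live in disjoint regions. Hence
\[
a_{0,n}=c_n+q(n)+o(\log n)\qquad \mathbf{P}_{1/2}\text{-a.s.},
\]
where $q(n)$ denotes the maximal number of disjoint yellow circuits surrounding the hexagon $n$ and of radius at most $n/3$ (a translate of $c_{n/3}$ in distribution). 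Since $c_n/\log n\to \tfrac1{2\sqrt3\pi}$ a.s.\ by (\ref{t41}), it suffices to construct, for each $\nu\in[0,\nu_1]$, a random subsequence $n_i$ with $q(n_i)/\log n_i\to\nu$. Note also that $a_{0,n}\ge c_{n/3}$ forces $a_{0,n}/\log n\ge \tfrac1{2\sqrt3\pi}-o(1)$ a.s., which is why the natural range of $\nu$ begins at $0$.

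\emph{Step 2: transfer to CLE$_6$ and invoke the large‑deviation principle.} Using the convergence of critical percolation interfaces to CLE$_6$ exactly as in \cite{Yao18}, $q$ restricted to radii in $[1,\rho]$ coincides, up to negligible error, with the number of CLE$_6$ loops surrounding a point inside a ball of radius $\rho$; by \cite{MWW16} this nesting obeys an LDP,
$\mathbf{P}\bigl(\text{nesting}\approx \nu'\log\rho\bigr)=\rho^{-\gamma(\nu')+o(1)}$,
with $\gamma$ the explicit convex rate function (Legendre dual to the Miller--Watson--Wilson cumulant exponent), vanishing precisely at the typical value $\nu_{\mathrm{typ}}=\tfrac1{2\sqrt3\pi}$, and with $\gamma(0)$ a known arm exponent, in particular $\gamma<1$ on $[0,\nu_{\mathrm{typ}}]$.

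\emph{Step 3: the construction.} Fix $\nu\in(0,\nu_1]$, choose an exponent $\beta=\beta(\nu)\in(0,1)$ and set $\nu'=\bigl(\nu-(1-\beta)\nu_{\mathrm{typ}}\bigr)/\beta\ge0$; then a site $n$ for which the configuration in $B(n,n^{\beta})$ carries the atypical nesting $\approx\nu'\beta\log n$ while the scales between $n^{\beta}$ and $n/3$ around $n$ are typical (contributing $\approx(1-\beta)\nu_{\mathrm{typ}}\log n$) satisfies $q(n)\approx\nu\log n$. At each dyadic scale $m=2^{k}$ one places $\asymp m^{\,1-\beta}$ well‑separated candidate sites at distance $\asymp m$ from the origin on the axis, inserts around each of them an intermediate open (blue) circuit at radius $\asymp m^{\beta}$ to make the small‑scale and large‑scale pictures conditionally independent, and asks for the prescribed atypical nesting inside. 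The inside events are genuinely independent across candidates and each has probability $m^{-\beta\gamma(\nu')+o(1)}$, so the expected number of good candidates at scale $m$ is $m^{\,1-\beta(1+\gamma(\nu'))+o(1)}$; the constant $\nu_1$ from Section \ref{subseq} is exactly the supremum of those $\nu$ for which $\beta$ can be chosen with $\beta\bigl(1+\gamma(\nu')\bigr)<1$. For such $\nu$ a second‑moment estimate gives a good candidate at scale $m$ with probability bounded below, and Borel--Cantelli yields infinitely many scales $n_i$ with $q(n_i)/\log n_i\to\nu$. The value $\nu=\nu_{\mathrm{typ}}$ is obtained by extracting an a.s.\ convergent subsequence from (\ref{t44}), and the endpoint $\nu=0$ together with the small‑$\nu$ range not reached by the displayed exponent is handled by the bounded‑probability blue‑circuit construction of \cite{Yao14} (interpolated, if needed, with a controlled fraction of atypical scales), giving the full interval $[0,\nu_1]$.

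I expect the main obstacle to lie in Step~3. The quantity $q(n)$ depends on the configuration out to radius $\asymp n$, which overlaps the small‑scale windows of the neighbouring candidates, so the event ``large scales around $n$ are typical'' is not independent of which candidate turns out to be good; forcing genuine conditional independence through the intermediate monochromatic circuits, and simultaneously verifying that the expected‑count exponent $1-\beta(1+\gamma(\nu'))$ is \emph{sharp} — so that it really pins down $\nu_1$ rather than merely giving a lower bound — while matching the numerical constants with \cite{MWW16}, is the delicate part of the argument.
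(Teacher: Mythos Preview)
Your Steps~1--2 line up with the paper's: the decomposition $a_{0,n}\approx c_n+q(n)$ appears there as (\ref{e90}), and the transfer of the MWW large-deviation lower bound to the discrete circuit count is done in Lemmas~\ref{l1}--\ref{ll2}. The divergence is in Step~3, and there the gap is real.

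The paper does \emph{not} use a two-scale scheme with well-separated candidates. Around each $n\in[2^j,2^{j+1})$ it forces the atypical nesting rate $\nu$ at \emph{every} dyadic scale from $1$ up to $\asymp 2^{j}$ (the events $\mathcal{F}(n;j^*)$), and then runs the second-moment argument over \emph{all} $\asymp 2^j$ integer sites, not $m^{1-\beta}$ of them. What makes this work is the correlation bound of Lemma~\ref{l3}: for two candidates at distance $d$, the atypical structure at scales larger than $d$ is essentially shared, so one pays the large-scale cost only once and obtains
\[
\mathbf{P}[\mathcal{F}(0;j)\cap\mathcal{F}(n;j)]\le C\,n^{-(\gamma(\nu)+\eta)}(M/\varepsilon)^{j(\gamma(\nu)+\eta)}\bigl(\mathbf{P}[\mathcal{F}(0;j)]\bigr)^2,
\]
which after summation gives $\mathbf{E}[X_j^2]\le C(\mathbf{E}[X_j])^2$ precisely when $\gamma(\nu)<1$. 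That is why the threshold is $\nu_1$ with $\gamma(\nu_1)=1$.

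Your construction throws this sharing away. By spacing candidates at $\asymp m^{\beta}$ and letting the scales in $[m^{\beta},m]$ be merely typical, you reduce the count from $m$ to $m^{1-\beta}$ and your success exponent becomes $1-\beta(1+\gamma(\nu'))$, not $1-\gamma(\nu)$. These do not match: for $\nu$ close to the paper's $\nu_1$ one has $\gamma(\nu)$ close to $1$, but taking $\beta$ near $1$ gives $\beta(1+\gamma(\nu'))\approx 2$, while taking $\beta$ small sends $\nu'\to\infty$ and (since $\gamma(x)\asymp x^2$) $\beta\gamma(\nu')\to\infty$ as well. The infimum over $\beta$ of $\beta(1+\gamma(\nu'))$ stays strictly above $1$ for $\nu$ near $\nu_1$, so your scheme only reaches a \emph{strictly smaller} interval. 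In particular your assertion that ``$\nu_1$ is exactly the supremum of those $\nu$ for which $\beta(1+\gamma(\nu'))<1$'' is incorrect: the paper defines $\nu_1$ by $\gamma(\nu_1)=1$, and that characterization is what the all-scales construction with the correlation lemma achieves. The missing idea is precisely Lemma~\ref{l3} (and the multi-scale event $\mathcal{F}(n;i,j)$ that makes it provable), not an optimization over~$\beta$.
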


Let us mention that $\nu_1>1/(2\sqrt{3}\pi)$; one can derive more
accurate approximation of $\nu_1$ from its definition.  Note that
(\ref{t41}) implies that $\liminf_{n\rightarrow\infty}a_{0,n}/\log
n\geq 1/(2\sqrt{3}\pi)$ a.s.  For the $\limsup$ we propose the
following question.
\begin{question}\label{q1}
Show that under the measure $\mathbf{P}_{1/2}$,
\begin{equation*}
\limsup_{n\rightarrow\infty}\frac{a_{0,n}}{\log
n}=\frac{1}{2\sqrt{3}\pi}+\nu_1\quad\mbox{a.s.}
\end{equation*}
\end{question}
Question \ref{q2} below is a discrete analog of Lemma 3.2 of
\cite{MWW16}, and is related to Question \ref{q1}.
\begin{question}\label{q2}
Suppose $\nu\geq 0$.  Show that for all functions $\delta(n)$
decreasing to 0 sufficiently slowly as $n\rightarrow \infty$, we
have
\begin{equation*}
\left\{
\begin{aligned}
&\mathbf{P}_{1/2}\left[\nu\leq \frac{c_n}{\log n}\leq
\nu+\delta(n)\right]=n^{-\gamma(\nu)+o(1)}, &\mbox{for }\nu>0,\\
&\mathbf{P}_{1/2}\left[\frac{\delta(n)}{2}\leq \frac{c_n}{\log
n}\leq \delta(n)\right]=n^{-5/48+o(1)}, &\mbox{for }\nu=0.
\end{aligned}
\right.
\end{equation*}
\end{question}
\begin{remark}
Lemma \ref{l20} implies that the left hand side of the above
equation is not smaller than $n^{-\gamma(\nu)+o(1)}$. The remaining
task is to bound it in the other direction.

Note that $5/48$ in the above equation is the value of 1-arm
exponent (see e.g. Theorem 21 of \cite{Nol08}).  It is clear that
\begin{equation*}
\mathbf{P}_{1/2}[c_n=0]=(1/2)\mathbf{P}_{1/2}[\mathcal
{A}_1(1,n)]=n^{-5/48+o(1)}.
\end{equation*}
(See Section \ref{pre} for the definition of 1-arm event $\mathcal
{A}_1(1,n)$.) Proposition 18 of \cite{Nol08} concerns arms with
``defects" (i.e. sites of the opposite color), and implies that for
each fixed number $d\in \mathbb{N}$,
\begin{equation*}
\mathbf{P}_{1/2}[c_n\leq d]\asymp (1+\log n)^d
\mathbf{P}_{1/2}[\mathcal {A}_1(1,n)]=n^{-5/48+o(1)}.
\end{equation*}
\end{remark}

\subsubsection{Application to cluster graph}

In this section, we shall introduce a model called cluster graph. It
is a natural object constructed from critical percolation.  Then we
give an application of the limit theorem for critical Bernoulli FPP
to this model.

\begin{figure}
\begin{center}
\includegraphics[height=0.3\textwidth]{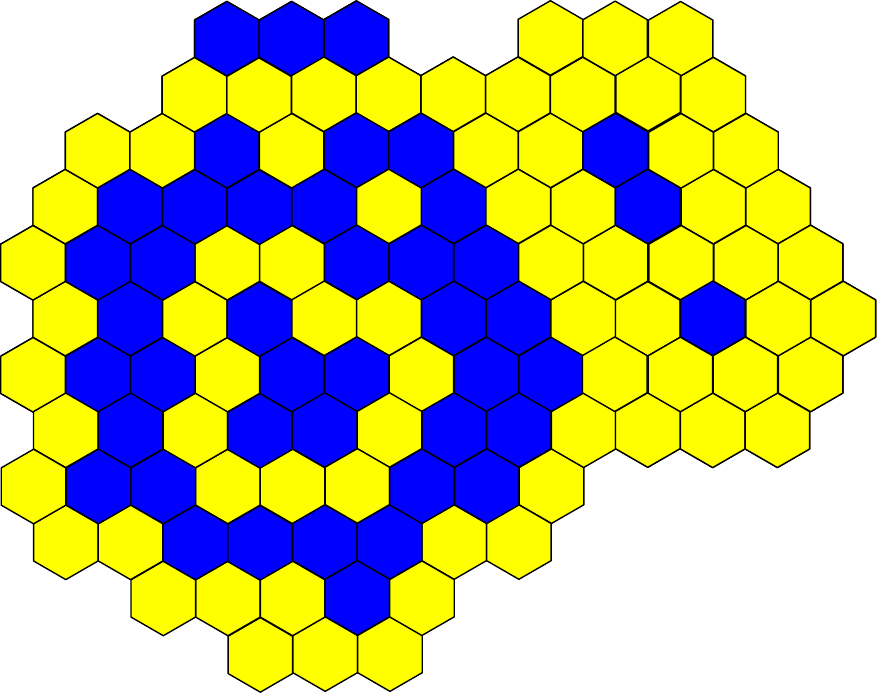}
\caption{The three open (blue) clusters on the left belong to the
same component of cluster graph.  The two open clusters on the right
are neighbors and belong to a finite component containing only the
two clusters.}\label{fig2}
\end{center}
\end{figure}

\emph{Cluster graph}.  Benjamini \cite{Ben13} studied some random
metric spaces modeled by graphs.  Based on the bond percolation on
$\mathbb{Z}^d$, in Section 10.2 of \cite{Ben13} he defined a random
graph called Contracting Clusters of Critical Percolation (CCCP) by
the following rule: Contract each open cluster into a single vertex
and define a new edge between the clusters $\mathcal {C},\mathcal
{C}'$ for every closed edge that connects them in $\mathbb{Z}^d$.
Similarly, let us define \textbf{cluster graph} based on critical
site percolation on $\mathbb{T}$: Contract each open cluster into a
single vertex and define a new edge between any pair of clusters
$\mathcal {C},\mathcal {C}'$ if there exists a closed hexagon
touching both of $\mathcal {C}$ and $\mathcal {C}'$.  Unlike
Benjamini's CCCP which is almost surely a connected multi-graph, our
cluster graph is a simple graph and has infinitely many components
almost surely.  See Figure \ref{fig2}.  We embed the cluster graph
in the plane in a natural way: Each open cluster is viewed as a
vertex of the cluster graph.  One may imagine open clusters as
islands and closed clusters as lakes, so one cannot cross the water
of width larger than the diameter of one hexagon.
\begin{proposition}\label{p5}
Cluster graph has a unique infinite component almost surely, denoted
by $\mathscr{C}$.  There exists a constant $C>0$, such that for any
$k\geq 1$,
\begin{equation}\label{e91}
\mathbf{P}_{1/2}[\dist(0,\mathscr{C})\geq k]\leq \exp(-Ck),
\end{equation}
where $\dist(\cdot,\cdot)$ denotes Euclidean distance.
\end{proposition}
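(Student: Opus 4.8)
The plan is to realise $\mathscr{C}$ through a coarse-graining of $\mathbb{T}$, reducing the statement to standard facts about supercritical percolation. Fix a large integer $L$ and tile $\mathbb{C}$ by congruent blocks of side of order $L$, indexed by a block lattice $\mathbb{T}_L$. I would call a block $b$ \emph{good} if, in a bounded neighbourhood of $b$, the configuration contains a blue cluster $\mathcal{C}_b$ reaching every face of $b$ shared with a $\mathbb{T}_L$-neighbour and, near the midpoint of each such face, agreeing with a prescribed bounded \emph{bottleneck gadget}: a single yellow hexagon having two blue neighbours on non-adjacent sides, one joined inside $b$ to $\mathcal{C}_b$ and the other reaching into the adjacent block. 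Because the two blocks sharing a face prescribe this gadget on the same finite set of hexagons, whenever two neighbouring blocks $b,b'$ are both good the common gadget forces $\mathcal{C}_b$ and $\mathcal{C}_{b'}$ to be adjacent in the cluster graph; hence the clusters attached to any $\mathbb{T}_L$-connected set of good blocks all lie in a single component of the cluster graph. The good-block field is $1$-dependent, and by Russo--Seymour--Welsh bounds (each gadget occupying $O(1)$ hexagons) $\mathbf{P}_{1/2}[b\text{ good}]$ is bounded below by a positive constant.

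The crux, which I expect to be the main obstacle, is to set up the coarse-graining so that the good-block field dominates a \emph{supercritical} percolation on $\mathbb{T}_L$ --- equivalently, so that the ``enhanced'' connectivity of the cluster graph (blue connections together with jumps across a single yellow hexagon) crosses a large box with overwhelming probability. Plain circuit or crossing events will not do this: at criticality hard-direction crossing probabilities remain bounded away from $1$, and two consecutive nested blue circuits around a point can lie in different components of the cluster graph (for instance when separated by a thick monochromatic yellow annulus), so naive Borel--Cantelli chaining of circuits fails, and one cannot simply boost a single-scale good event by insisting on extra gadgets. I would instead run a multi-scale argument in which the bottleneck jumps are used iteratively to splice many nearby blue crossings together; the self-matching structure of $\mathbb{T}$ and RSW for both colours are the tools, and one should end up with good (or ``good at scale $L^{2}$'') events of probability so close to $1$ that the Liggett--Schonmann--Stacey domination of finitely dependent fields by product measures applies.

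Granting such a supercritical coarse-graining, the remaining assertions are routine. The good-block field then has a unique infinite $\mathbb{T}_L$-cluster a.s., which yields an infinite component of the cluster graph; uniqueness of the latter follows from translation ergodicity of $\mathbf{P}_{1/2}$ (the number of infinite components is a.s.\ constant), a finite-energy surgery ruling out a finite number $\ge 2$ --- two such could be merged by a bounded RSW modification of the configuration near a common block, contradicting invariance --- and a Burton--Keane trifurcation count ruling out infinitely many. For \eqref{e91}, one uses the exponential decay of the radius of finite clusters in the supercritical coarse-grained model: with probability $\ge 1-e^{-cm}$ the origin's block lies within $\mathbb{T}_L$-distance $m$ of the infinite good-block cluster, and the associated chain of good blocks and their cluster-graph-connected clusters then places a vertex of the unique infinite component $\mathscr{C}$ within Euclidean distance $O(Lm)$ of the origin. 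Taking $m\asymp k$ and relabelling the constant gives the bound; the only point needing care is to estimate $\dist(0,\mathscr{C})$ by the diameter of the blocks traversed along the geodesic of good blocks rather than by the possibly larger diameters of the clusters $\mathcal{C}_b$.
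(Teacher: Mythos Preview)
Your proposal has a genuine gap at exactly the point you yourself flag as ``the crux''. At $p=1/2$ the crossing and circuit probabilities for blue are bounded away from $1$ by RSW, so no single-scale good-block event of the type you describe can be pushed above the Liggett--Schonmann--Stacey threshold by taking $L$ large. You then appeal to an unspecified multi-scale splicing argument using bottleneck jumps; but you give no mechanism for why iterating across scales should drive the probability of a renormalised good event to $1$, and in fact the scale-invariance of critical percolation makes any such naive iteration stall --- the enhanced (blue plus one-yellow-jump) crossing probability of a box is a priori a fixed number in $(0,1)$ independent of the box size, so a renormalisation that only uses crossing events will not bootstrap. Without this step the rest of the argument (LSS domination, uniqueness via Burton--Keane, exponential decay of finite clusters) never gets off the ground.

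The paper bypasses the supercriticality question entirely by working on the \emph{dual} side. It introduces the notion of a closed \emph{double circuit} (two nested closed circuits with every site of one adjacent to the other) and proves two combinatorial facts: (i) any closed double circuit contains a closed circuit in the sublattice $\widetilde{\mathbb{Z}^2}\subset\mathbb{T}$ obtained by deleting one of the three bond directions; (ii) an open cluster lies in a finite component of the cluster graph if and only if it is surrounded by a closed double circuit. Since $\widetilde{\mathbb{Z}^2}$ is a copy of $\mathbb{Z}^2$ and $p_c^{\mathrm{site}}(\mathbb{Z}^2)>1/2$, closed site percolation on $\widetilde{\mathbb{Z}^2}$ at parameter $1/2$ is strictly \emph{subcritical}, and the radius of a closed $\widetilde{\mathbb{Z}^2}$-cluster through a given site decays exponentially. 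A union bound over the intersection of a putative large closed double circuit with the positive real axis then gives $\mathbf{P}_{1/2}[\text{closed double circuit of diameter}\ge k\text{ around }0]\le e^{-Ck}$, from which both the existence/uniqueness of $\mathscr{C}$ and \eqref{e91} follow directly. The moral is that the ``enhanced'' blue connectivity being supercritical is equivalent, via the double-circuit characterisation, to ordinary yellow site percolation on $\mathbb{Z}^2$ being subcritical at $1/2$ --- a known strict inequality --- and no renormalisation is needed.
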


To state the following theorem, we need some notation.  Let
$D(\cdot,\cdot)$ denote the graph distance in the cluster graph. For
$n\in\mathbb{N}$, denote by $\mathcal {C}_n$ the innermost open
cluster surrounding $B(n)$.  Let $\mathcal {C}_0$ be the open
cluster containing 0.  Note that if 0 is closed, $\mathcal
{C}_0=\emptyset$. Using (\ref{t41}), we derive the following strong
law of large numbers for the graph distance in cluster graph.
\begin{theorem}\label{t6}
Under the conditional probability measure
$\mathbf{P}_{1/2}[\cdot|\mathcal {C}_0\in \mathscr{C}]$,
\begin{equation*}
\lim_{n\rightarrow\infty}\frac{D(\mathcal {C}_0,\mathcal
{C}_n)}{\log n}=\frac{1}{2\sqrt{3}\pi}\quad\mbox{a.s.}
\end{equation*}
\end{theorem}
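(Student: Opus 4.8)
The plan is to relate the graph distance $D(\mathcal{C}_0,\mathcal{C}_n)$ in the cluster graph to the first-passage time $c_n$ of critical Bernoulli FPP, and then invoke \eqref{t41}. The key observation is that a path in the cluster graph from $\mathcal{C}_0$ to $\mathcal{C}_n$ corresponds, after ``uncontracting'' the clusters, to a sequence of open clusters linked consecutively by closed hexagons. Its graph length equals the number of edges traversed, which is one less than the number of distinct open clusters visited, and this in turn is closely tied to the number of closed hexagons one must pass through when moving from a neighborhood of $0$ out past $B(n)$. So the heart of the matter is the two-sided comparison $D(\mathcal{C}_0,\mathcal{C}_n) = c_n + O(1)$, or at least $D(\mathcal{C}_0,\mathcal{C}_n)/\log n \to \tfrac{1}{2\sqrt 3\pi}$ via \eqref{t41}.

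First I would prove the lower bound $D(\mathcal{C}_0,\mathcal{C}_n)\geq c_n - C$. Given any cluster-graph path $\mathcal{C}_0=\mathcal{D}_0,\mathcal{D}_1,\dots,\mathcal{D}_m=\mathcal{C}_n$, for each consecutive pair pick a closed hexagon $h_i$ touching both $\mathcal{D}_{i-1}$ and $\mathcal{D}_i$. Concatenating: a path inside $\mathcal{D}_0$ from $0$ (or from a site of $\mathcal{C}_0$ near $0$) to $h_1$, then $h_1$, then a path inside $\mathcal{D}_1$ to $h_2$, etc., yields a lattice path from $0$ to $\mathcal{C}_n$ whose passage time is at most $m+O(1)$, since the open clusters contribute $0$ and we cross at most $m$ closed hexagons (plus a bounded correction for getting from $0$ into $\mathcal{C}_0$ when $0$ is closed, handled by conditioning on $\mathcal{C}_0\in\mathscr{C}$, and for the fact that $\mathcal{C}_n$ surrounds $B(n)$ so any such path must reach radius $n$). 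Minimizing over cluster-graph paths gives $c_n\leq D(\mathcal{C}_0,\mathcal{C}_n)+C$.

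For the upper bound $D(\mathcal{C}_0,\mathcal{C}_n)\leq c_n + C$, I would start from a geodesic $\gamma$ for $c_n$, i.e.\ a lattice path from $0$ to $\partial B(n)$ with exactly $c_n$ closed hexagons. Walking along $\gamma$, the open hexagons it passes through lie in open clusters, and each maximal run of open hexagons lies in a single open cluster; consecutive such clusters along $\gamma$ are either equal, or adjacent through one of the $\leq c_n$ closed hexagons on $\gamma$. Thus the sequence of distinct open clusters met by $\gamma$ forms a walk in the cluster graph of length at most $c_n$, from $\mathcal{C}_0$ to some open cluster touching $\partial B(n)$; one then needs that this terminal cluster can be connected to $\mathcal{C}_n$ (the innermost open cluster surrounding $B(n)$) in a bounded number of cluster-graph steps, which follows from standard RSW / arm-event estimates showing that near $\partial B(n)$ all large open clusters are cluster-graph-close with high probability, or more robustly by noting that $\mathcal{C}_n$ itself is reached by extending $\gamma$ slightly. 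Combining the two bounds, $|D(\mathcal{C}_0,\mathcal{C}_n)-c_n|\leq C$, and dividing by $\log n$ and applying \eqref{t41} finishes the proof; the a.s.\ convergence survives conditioning on the positive-probability event $\{\mathcal{C}_0\in\mathscr{C}\}$.

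The main obstacle I expect is the clean handling of the endpoints, especially the upper-bound step of certifying that the open cluster where the $c_n$-geodesic exits $B(n)$ is within $O(1)$ cluster-graph distance of the designated cluster $\mathcal{C}_n$, and symmetrically that $\mathcal{C}_0\in\mathscr{C}$ lets us start the walk at $0$ without paying more than a bounded cost. A delicate point is that $\mathcal{C}_n$ is defined as the \emph{innermost} surrounding open cluster, whereas a geodesic for $c_n$ need not touch it; one must argue — via the existence (with the conditioning, almost surely for large $n$) of open circuits in annuli $B(2n)\setminus B(n)$ and the fact that any two open clusters both touching such a circuit are cluster-graph neighbors or near-neighbors — that the discrepancy is $O(1)$ uniformly, or absorb it into a lower-order term that vanishes after division by $\log n$. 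Everything else is routine once this endpoint bookkeeping is pinned down.
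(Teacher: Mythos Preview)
Your overall strategy---relate $D(\mathcal{C}_0,\mathcal{C}_n)$ to $c_n$ and invoke \eqref{t41}---is the paper's strategy too, and your lower bound $c_n \leq D(\mathcal{C}_0,\mathcal{C}_n)+O(1)$ is fine. But the upper bound has a genuine gap, and it is not the endpoint issue you flagged.

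You assert that along a geodesic $\gamma$, ``consecutive such clusters along $\gamma$ are either equal, or adjacent through one of the $\leq c_n$ closed hexagons on $\gamma$.'' This fails whenever $\gamma$ contains two or more \emph{consecutive} closed sites. If $\gamma$ reads $\ldots, a, c_1, c_2, b, \ldots$ with $a,b$ open and $c_1,c_2$ closed, then $c_1$ touches the cluster of $a$ and $c_2$ touches the cluster of $b$, but no single closed hexagon need touch both clusters, so they are not cluster-graph neighbors. Nothing in the notion of geodesic rules this out: by Proposition~\ref{p3}(ii) the closed sites on $\gamma$ lie in disjoint surrounding closed circuits, but adjacent sites can perfectly well sit in distinct nested circuits. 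So your walk in the cluster graph may have length strictly larger than $c_n$, and you have no control on the excess.

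The paper closes exactly this gap via Proposition~\ref{p7}. A pair of neighboring closed sites lying on an infinite geodesic from $0$ is called a \emph{bad site}; around such a site a $6$-arm type event (polychromatic $\mathcal{A}_6$ at small scales, degenerating to monochromatic $\mathcal{A}_{(111111)}$ at larger scales) must occur out to scale $\asymp |v|$. Since the $6$-arm exponent exceeds $2$ (and Lemma~\ref{l4} controls the monochromatic piece), the expected number of bad sites is finite, hence a.s.\ finite. One then fixes a random $k_0$ with no bad sites outside $B(2^{k_0})$ and an open circuit in $A(2^{k_0},2^{k_0+1})$; beyond that scale every geodesic has isolated closed sites, so $D(\mathcal{C}_m,\mathcal{C}_n)=T(\mathcal{C}_m,\mathcal{C}_n)$ for all $2^{k_0}\leq m\leq n$, and the contribution inside $B(2^{k_0})$ is an $O(1)$ random constant. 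The endpoint bookkeeping you anticipated (comparing $T(0,\mathcal{C}_n)$ with $T(0,\partial B(n))$) is handled by a routine Borel--Cantelli using Lemma~\ref{l10}; the bad-sites step is where the real work lies.
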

It is worth mentioning another application of critical Bernoulli FPP
to loop graph defined below.  We will just state the result without
giving the proof.  We note that the proof is very similar to that
for cluster graph.

\emph{Loop graph}.  It is well known that Camia and Newman
\cite{CN06} proved that the scaling limit of cluster boundaries of
critical site percolation on $\mathbb{T}$ is CLE$_6$.  Several
properties of this scaling limit are established; the third item in
Theorem 2 of \cite{CN06} was called ``finite chaining" property in
Proposition 2.7 in \cite{GPS13}.  That is, for the full-plane
CLE$_6$, almost surely any two loops are connected by a finite
¡°path¡± of touching loops.  It is natural to define a discrete
version of this notion.  Similarly to cluster graph, for critical
site percolation on $\mathbb{T}$, we contract each cluster boundary
loop into a single vertex and define a new edge between any pair of
loops $\mathcal {L},\mathcal {L}'$ if there exists a hexagon
touching both of $\mathcal {L}$ and $\mathcal {L}'$.  Then we get a
random graph called \textbf{loop graph}, whose vertices correspond
to cluster boundary loops.

Let $D_l(\cdot,\cdot)$ denote the graph distance in the loop graph.
For $n\in \mathbb{N}$, denote by $\mathcal {L}_n$ the innermost
cluster boundary loop surrounding $\mathbb{D}(n)$. Let $\mathcal
{L}_0$ be the innermost cluster boundary loop surrounding 0.

Similarly to cluster graph, loop graph also has a unique infinite
component almost surely, denoted by $\mathscr{C}_l$.  Moreover,
under $\mathbf{P}_{1/2}[\cdot|\mathcal {L}_0\in \mathscr{C}_l]$,
\begin{equation*}
\lim_{n\rightarrow\infty}\frac{D_l(\mathcal {L}_0,\mathcal
{L}_n)}{\log n}=\frac{1}{\sqrt{3}\pi}\quad\mbox{a.s.}
\end{equation*}

\section{Notation and preliminaries}\label{pre}
Our proofs rely heavily on critical and near-critical percolation.
In this section we collect some results that are needed.

A \textbf{circuit} is a path $(v_1,\ldots,v_n)$ with $n\geq 3$, such
that $v_1$ and $v_n$ are neighbors.  Note that the bonds
$(v_1,v_2),\ldots,(v_n,v_1)$ of the circuit form a Jordan curve, and
sometimes the circuit is viewed as this curve.  For a circuit
$\mathcal {C}$, define
\begin{equation*}
\overline{\mathcal {C}}:= \mathcal {C}\cup\mbox{ interior sites of
}\mathcal {C}.
\end{equation*}

If $W$ is a set of sites, then its (external) site boundary is
\begin{equation*}
\Delta W:=\{v:v\notin W\mbox{ but $v$ is adjacent to $W$}\}.
\end{equation*}

Given $\varepsilon\in(0,1)$ and $p\in (1/2,1]$, we define the
\textbf{correlation length} (or characteristic length) by
\begin{equation*}
L_{\varepsilon}(p):=\min\{n\in\mathbb{N}: \mathbf{P}_p[\mbox{there
is an open horizontal crossing of $[0,n]^2$}]\geq 1-\varepsilon\},
\end{equation*}
and by $L_{\varepsilon}(p):=L_{\varepsilon}(1-p)$ for $p<1/2$.  We
will take the convention $L_{\varepsilon}(1/2)=\infty$.

For two positive functions $f$ and $g$ from a set $\mathcal {X}$ to
$(0,\infty)$, we write $f(x)\asymp g(x)$ to indicate that
$f(x)/g(x)$ is bounded away from 0 and $\infty$, uniformly in $x\in
\mathcal {X}$. It is well known (see e.g. \cite{Nol08}) that there
exists $\varepsilon_0>0$ such that for all
$0<\varepsilon_1,\varepsilon_2\leq \varepsilon_0$ we have
$L_{\varepsilon_1}(p)\asymp L_{\varepsilon_2}(p)$.  For simplicity
we will write $L(p):=L_{\varepsilon_0}(p)$ for the entire paper.

For $1\leq r\leq R$ and $v\in \mathbb{V}$, define annuli
\begin{align*}
A(v;r,R):=B(v,R)\backslash B(v,r),~~A(r,R):=A(0;r,R).
\end{align*}

The so-called arm events play a central role in studying
near-critical percolation.  A color sequence $\sigma$ is a sequence
$(\sigma_1,\sigma_2,\dots,\sigma_k)$ of ``blue" and ``yellow" of
length $k$.  We write 0 and 1 for blue and yellow, respectively.  We
identify two sequences if they are the same up to a cyclic
permutation.  For an annulus $A(v;r,R)$, we denote by $\mathcal
{A}_{\sigma}(v;r,R)$ the event that there exist $|\sigma|=k$
disjoint monochromatic paths called \textbf{arms} in $A(v;r,R)$
connecting the two boundary pieces of $A(v;r,R)$, whose colors are
those prescribed by $\sigma$, when taken in counterclockwise order.

For simplicity, for any $r\geq 1$, we let $\mathcal
{A}_{\sigma}(v;r,r)$ be the entire sample space $\Omega$.  Write
$\mathcal {A}_{\sigma}(r,R)=\mathcal {A}_{\sigma}(0;r,R)$ and
$\mathcal {A}_1=\mathcal {A}_{(0)}$, $\mathcal {A}_2=\mathcal
{A}_{(01)}$, $\mathcal {A}_4=\mathcal {A}_{(0101)}$, $\mathcal
{A}_6=\mathcal {A}_{(011011)}$.

Let $\mathcal {O}(r,R)$ denote the event that there exists a blue
circuit surrounding 0 in $A(r,R)$.  Note that $\mathcal
{O}(r,R)=\mathcal {A}_{(1)}^c(r,R)$.

We assume that the reader is familiar with the FKG inequality (see
Lemma 13 in \cite{Nol08} for generalized FKG), the BK (van den
Berg-Kesten) inequality, and the RSW (Russo-Seymour-Welsh)
technology.  Here we collect some classical results in near-critical
percolation that will be used. See e.g. \cite{Nol08,Wer09} and
Section 2.2 in \cite{BKN18}.

\begin{enumerate}[(i)]
\item \emph{A priori bounds for arm events}:  For any color sequence $\sigma$, there
exist $C_1(|\sigma|)$, $C_2(|\sigma|)$, $\alpha(|\sigma|)$,
$\beta(|\sigma|)>0$ such that for all $1\leq r<R$,
\begin{equation*}
C_1\left(\frac{r}{R}\right)^{\alpha}\leq \mathbf{P}_{1/2}[\mathcal
{A}_{\sigma}(r,R)]\leq C_2\left(\frac{r}{R}\right)^{\beta}.
\end{equation*}
\item \emph{Extendability}: For any color sequence $\sigma$,
\begin{equation*}
\mathbf{P}_p[\mathcal {A}_{\sigma}(r,2R)]\asymp
\mathbf{P}_p[\mathcal {A}_{\sigma}(r,R)]
\end{equation*}
uniformly in $p$ and $1\leq r\leq R\leq L(p)$.
\item \emph{Quasi-multiplicativity}: For any color sequence $\sigma$, there
exists $C(|\sigma|)>0$, such that
\begin{equation*}
C\mathbf{P}_p[\mathcal {A}_{\sigma}(r_1,r_2)]\mathbf{P}_p[\mathcal
{A}_{\sigma}(r_2,r_3)]\leq \mathbf{P}_p[\mathcal
{A}_{\sigma}(r_1,r_3)]
\end{equation*}
uniformly in $p$ and $1\leq r_1<r_2<r_3\leq L(p)$.
\item For any color sequence $\sigma$,
\begin{equation}\label{e7}
\mathbf{P}_p[\mathcal {A}_{\sigma}(r,R)]\asymp
\mathbf{P}_{1/2}[\mathcal {A}_{\sigma}(r,R)]
\end{equation}
uniformly in $p$ and $1\leq r<R\leq L(p)$.
\item As $p\rightarrow 1/2$,
\begin{equation}\label{e8}
|p-1/2|(L(p))^2\mathbf{P}_{1/2}[\mathcal {A}_4(1,L(p))]\asymp 1.
\end{equation}
\item  \emph{Exponential decay with respect to $L(p)$}.  There are constants $C_1,C_2>0$, such that for all
$p>1/2$ and $R\geq L(p)$ (see item (ii) in Section 2.2 of
\cite{BKN18}),
\begin{align}
&\mathbf{P}_p[\mathcal {A}_1(R,\infty)]\geq
1-C_1\exp\left(-C_2\frac{R}{L(p)}\right),\label{e11}\\
&\mathbf{P}_p[\mathcal {O}(R,2R)]\geq
1-C_1\exp\left(-C_2\frac{R}{L(p)}\right).\label{e40}
\end{align}
\item There exist constants $\varepsilon,C>0$, such that for all $p$ and $1\leq r<R\leq L(p)$,
\begin{equation}\label{e9}
\mathbf{P}_p[\mathcal {A}_4(r,R)]\geq
C\left(\frac{r}{R}\right)^{2-\varepsilon}.
\end{equation}
\item There exist constants $\varepsilon,C>0$, such that for all $1\leq r<R$,
\begin{equation}\label{e67}
\mathbf{P}_{1/2}[\mathcal {A}_6(r,R)]\leq
C\left(\frac{r}{R}\right)^{2+\varepsilon}.
\end{equation}
\item When $p\rightarrow 1/2$,
\begin{equation}\label{e14}
L(p)=|p-1/2|^{-4/3+o(1)}.
\end{equation}
\end{enumerate}

It is well known that for a fixed number $j$ of arms, if its color
sequence is polychromatic (both colors are present), prescribing it
changes the probability only by at most a constant factor.  Beffara
and Nolin \cite{BN11} showed that the monochromatic $j$-arm exponent
is strictly between the polychromatic $j$-arm and $(j+1)$-arm
exponents.  The following result was essentially proved, see (2.10)
and the inequality just below (3.1) in \cite{BN11}.
\begin{lemma}[\cite{BN11}]\label{l4}
For any polychromatic color sequence $\sigma$, there exist
$\varepsilon,C>0$ (depending on $|\sigma|$), such that for all
$1\leq m<n$,
\begin{equation*}
\mathbf{P}_{1/2}[\mathcal {A}_{(\underbrace{1\ldots
1}_{|\sigma|})}(m,n)]\leq
C\left(\frac{n}{m}\right)^{-\varepsilon}\mathbf{P}_{1/2}[\mathcal
{A}_{\sigma}(m,n)].
\end{equation*}
\end{lemma}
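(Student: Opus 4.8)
\emph{Proof proposal.} The plan is to follow the multi-scale strategy of Beffara and Nolin \cite{BN11}, which compares the monochromatic $j$-arm event ($j:=|\sigma|\ge 2$) to a polychromatic one with the \emph{same} number of arms. I would begin with two routine reductions. By the standard consequence of RSW and the generalized FKG inequality that any two polychromatic color sequences of a given length have comparable arm probabilities at $p=1/2$, it suffices to prove the bound with $\mathcal{A}_\sigma$ replaced by $\mathcal{A}_{\sigma_0}$, where $\sigma_0=(1\ldots10)$ has $j-1$ yellow arms and one blue arm. After rescaling I may take $m=1$ and $n=K^N$ for a large integer $K$ to be chosen at the end, and tile $A(1,K^N)$ by the $K$-adic annuli $A_k:=A(K^k,K^{k+1})$, $0\le k\le N-1$. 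I would freely use the classical full quasi-multiplicativity of arm probabilities together with the separation lemmas (valid for monochromatic as well as polychromatic color sequences), the a priori bounds in item (i), and the classical fact that inserting one more polychromatic arm costs a power: $\mathbf{P}_{1/2}[\mathcal{A}_{\ell+1}(r,R)]\le C(r/R)^{\delta(\ell)}\mathbf{P}_{1/2}[\mathcal{A}_\ell(r,R)]$ with $\delta(\ell)>0$ (a consequence of quasi-multiplicativity and the strict monotonicity of the polychromatic arm exponents).

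The heart of the argument is a per-scale dichotomy. Fix $j$ disjoint yellow arms realizing $\mathcal{A}_{(1^j)}(1,K^N)$ and, for each $k$, consider the $j$ cyclically ordered sectors they cut out in $A_k$. Say $A_k$ is of \textbf{type I} if some sector is crossed by blue from its inner arc to its outer arc; then $A_k$ carries $j+1$ polychromatic arms, so by the insertion estimate this scale carries a factor $K^{-\delta}$ relative to a $j$-arm event. Otherwise $A_k$ is of \textbf{type II}: by planar duality every sector is then crossed by yellow ``the wide way'' joining the two bounding yellow arms, and a short topological argument (a radial segment anywhere in $A_k$ must meet one of the yellow arms or one of these wide crossings) shows $A_k$ contains a yellow circuit around $0$ threaded by the $j$ radial yellow arms. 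Since distinct $A_k$ involve disjoint sites, the per-annulus events are independent across $k$, so $\mathbf{P}_{1/2}[\mathcal{A}_{(1^j)}(1,K^N)]$ is bounded by a product over $k$ of per-annulus probabilities in which every type-I factor already carries a power gain; combining with $\mathbf{P}_{1/2}[\mathcal{A}_{\sigma_0}(1,K^N)]\asymp\prod_k\mathbf{P}_{1/2}[\mathcal{A}_{\sigma_0}(A_k)]$ (quasi-multiplicativity) handles the contribution in which \emph{all} scales are of type I, with a gain that is as large as we like once $K$ is large.

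The main obstacle is the type-II scales: a single yellow circuit in a fixed-modulus annulus has probability of order one, so there is no gain scale by scale, and this is exactly the delicate step of \cite{BN11}. I would handle it by organizing the type-II scales through the \emph{innermost} yellow circuit around $0$, deriving a renewal-type inequality of the shape
\begin{equation*}
\phi(1,n)\le (\text{power gain})\cdot\mathbf{P}_{1/2}[\mathcal{A}_{\sigma_0}(1,n)]+\sum_{\rho}\Big(\prod_{1\le K^i<\rho}K^{-\delta}\,\mathbf{P}_{1/2}[\mathcal{A}_{\sigma_0}(A_i)]\Big)\,\psi(\rho)\,\phi(\rho,n),
\end{equation*}
where $\phi$ denotes the monochromatic $j$-arm probability, the sum is over $K$-adic radii $\rho$, and $\psi(\rho)$ bounds the probability of the type-II configuration in $A(\rho,K\rho)$; one then solves this recursion inductively in $n/m$, using the a priori bounds, so that the polynomial suppression accumulated by the type-I annuli inside radius $\rho$ beats the logarithmic number of choices of $\rho$ (and of the circuit scale). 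The point that I expect to be genuinely hard is precisely this accumulation — showing that the type-II scales cannot conspire to hold the monochromatic probability up at the polychromatic level — which forces a careful choice of $K$ and a clean ``first type-II scale'' bookkeeping. Carrying this through yields $\phi(1,n)\le Cn^{-\varepsilon}\mathbf{P}_{1/2}[\mathcal{A}_{\sigma_0}(1,n)]$, and after undoing the reductions this is the asserted inequality, with $\varepsilon$ and $C$ depending only on $|\sigma|$.
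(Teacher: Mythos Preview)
The paper does not give its own proof of this lemma; it simply records that the statement follows from (2.10) and the inequality just below (3.1) in \cite{BN11}, and uses it as a black box. So there is nothing in the paper to compare your argument against beyond the citation.

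Your proposal is an attempt to reconstruct the Beffara--Nolin argument itself. The high-level architecture --- reduce to a single polychromatic reference sequence, tile by $K$-adic annuli, and run the per-scale dichotomy ``extra arm of the opposite color versus circuit of the same color'' --- is indeed the backbone of \cite{BN11}. Where your proposal remains a plan rather than a proof is exactly the place you flag: the type-II (circuit) scales. Your renewal inequality organized around the innermost yellow circuit is the right kind of bookkeeping, but you have not actually solved it, and the naive closing of the induction is delicate --- the first scale can already be type II (so no power has been accumulated), and the quasi-multiplicativity constants have to be beaten by a suitable choice of $K$ and $\varepsilon$. In \cite{BN11} this is handled carefully (and Reimer's inequality gives the soft bound $\mathbf{P}_{1/2}[\mathcal A_{(1\ldots1)}]\le \mathbf{P}_{1/2}[\mathcal A_{\sigma}]$ that anchors the comparison; cf.\ the paper's Remark after Lemma~\ref{l7}). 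For the purposes of the present paper, which only \emph{uses} Lemma~\ref{l4}, the citation to \cite{BN11} is the intended proof; your sketch is headed in the right direction but would need the recursion to be written out and closed to stand on its own.
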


We call a continuous map from the circle to $\mathbb{C}$ a
\textbf{loop}; the loops are identified up to reparametrization by
homeomorphisms of the circle with positive winding.  Let
$\textrm{d}(\cdot,\cdot)$ denote the uniform metric on loops:
\begin{equation*}
\textrm{d}(\gamma_1,\gamma_2):=\inf_{\phi}\sup_{t\in
\mathbb{R}/\mathbb{Z}}|\gamma_1(t)-\gamma_2(\phi(t))|,
\end{equation*}
where the infimum is taken over all homeomorphisms of the circle
which have positive winding.  The distance between two closed sets
of loops is defined by the induced Hausdorff metric as follows:
\begin{equation}\label{e66}
\dist(\mathcal {F},\mathcal {F}'):=\inf\{\varepsilon>0:\forall
\gamma\in \mathcal {F},\exists \gamma'\in\mathcal {F}'\mbox{ such
that }\textrm{d}(\gamma,\gamma')\leq\varepsilon\mbox{ and vice
versa}\}.
\end{equation}

For critical site percolation on $\mathbb{T}$, we orient a cluster
boundary loop counterclockwise if it has blue sites on its inner
boundary and yellow sites on its outer boundary, otherwise we orient
it clockwise.  We say $B(R)$ has \textbf{monochromatic (blue)
boundary condition} if all the sites in $\Delta B(R)$ are blue.
Based on Smirnov's celebrated work \cite{Smi01}, Camia and Newman
\cite{CN06} showed the following well-known result.

\begin{theorem}[\cite{CN06}]\label{t5}
As $\eta\rightarrow 0$, the collection of all cluster boundaries of
critical site percolation on $\eta\mathbb{T}$ in $\mathbb{D}$ with
monochromatic boundary conditions converges in distribution, under
the topology induced by metric (\ref{e66}), to a probability
distribution on collections of continuous nonsimple loops in
$\overline{\mathbb{D}}$.
\end{theorem}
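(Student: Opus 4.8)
The plan is to prove Theorem \ref{t5} in two stages: \emph{tightness} of the family of laws of the cluster-boundary collections as $\eta\to0$, and \emph{uniqueness of the subsequential limit}. The second stage rests on Smirnov's theorem \cite{Smi01}, which identifies the scaling limit of chordal percolation interfaces with chordal SLE$_6$.

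For tightness, I would first combine the a priori polynomial arm bounds (item (i) of Section \ref{pre}) with RSW to obtain Aizenman--Burchard-type estimates: for each integer $k$ there is $\lambda(k)$ with $\lambda(k)\to\infty$ such that, uniformly in $\eta$, the probability that some cluster boundary loop crosses an annulus $A(z;\rho,2\rho)$ at least $k$ times is at most $C\rho^{\lambda(k)}$. Taking $k$ large, this yields uniform H\"older equicontinuity for every loop, so that with probability tending to $1$ all loops lie in a fixed family of loops that is compact for the uniform metric $\mathrm{d}$; the same estimates bound, tightly in $\eta$, the number of loops of diameter at least $\delta$ for every $\delta>0$. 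Since the closed subsets of a compact metric space form a compact space under the Hausdorff metric, the family of laws is tight in the topology induced by \eqref{e66}.

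To identify the limit, fix a countable dense set $\{z_j\}\subset\mathbb{D}$ and a scale $\delta>0$. I would run a finite sequence of percolation exploration processes --- the first discovering the outermost cluster boundary loop, the subsequent ones carried out inside the connected components of its complement, and so on --- arranged so as to discover every cluster boundary loop of diameter at least $\delta$ that surrounds one of the $z_j$. Realizing each exploration step via chordal percolation interfaces, Smirnov's theorem gives convergence in law to the corresponding SLE$_6$ curves in the appropriate (random) subdomains, and the locality property of SLE$_6$ lets one glue these limits together consistently; hence the joint law of these finitely many interfaces --- and so of the finitely many ``large'' loops --- converges as $\eta\to0$. The monochromatic boundary condition enters only through the initial configuration of the first exploration and is controlled by RSW. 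Thus along any subsequence the restriction of the loop collection to loops of diameter $\ge\delta$ converges, to a limit not depending on the subsequence. Coupling two subsequential limits through this exploration so that for each $\delta$ their sets of loops of diameter $\ge\delta$ agree almost surely, and invoking the Aizenman--Burchard bound to see that loops of diameter $<\delta$ are $O(\delta)$-dense in $\overline{\mathbb{D}}$ and hence perturb \eqref{e66} by at most $O(\delta)$, one lets $\delta\to0$ to identify the two limits.

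The hard part will be making those last steps rigorous: one must show that finitely many exploration steps genuinely reveal \emph{all} loops of diameter $\ge\delta$ --- i.e. that every such loop is enclosed by crossings that an exploration process is forced to traverse --- and that this combinatorial bookkeeping, together with the measurability of the map from the exploration data to the loop collection, is stable as $\eta\to0$. Because CLE$_6$ loops touch one another and the boundary, the exploration curves are non-simple and the loops are non-simple, so controlling, uniformly in $\eta$, which exploration step discovers which loop is where essentially all of the work lies; the RSW, quasi-multiplicativity and arm-event estimates quoted in Section \ref{pre} are the main quantitative tools for this.
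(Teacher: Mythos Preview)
The paper does not prove Theorem~\ref{t5}; it is quoted verbatim from Camia and Newman \cite{CN06} and used as a black box (for instance in Lemma~\ref{l2} and in the discussion preceding it). So there is no ``paper's own proof'' to compare your proposal against.

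That said, your outline is a faithful sketch of the Camia--Newman strategy in \cite{CN06}: tightness via Aizenman--Burchard regularity estimates derived from RSW/arm bounds, and identification of the subsequential limit through an iterated exploration procedure whose chordal pieces converge to SLE$_6$ by Smirnov's theorem. You have also correctly located the genuinely difficult part --- showing that the exploration tree recovers \emph{all} macroscopic loops and that this identification passes to the limit despite the non-simplicity of the loops and their mutual touching. If you intend to actually carry this out rather than cite it, be aware that this last step is the bulk of \cite{CN06} and requires substantial work beyond the arm estimates collected in Section~\ref{pre}: one needs fine control of how exploration curves approach the boundary of the already-explored region, a careful definition of the continuum exploration in random non-smooth domains, and a proof that the discrete-to-continuum coupling of a single interface can be iterated through the tree. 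For the purposes of the present paper, citing \cite{CN06} is the appropriate route.
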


We call the continuum nonsimple loop process in Theorem \ref{t5} the
conformal loop ensemble CLE$_6$ in $\overline{\mathbb{D}}$.  General
CLE$_\kappa$ for $8/3<\kappa<8$ is the canonical conformally
invariant measure on countably infinite collections of noncrossing
loops in a simply connected planar domain, see \cite{She09,SW12}. We
denote by $\mathbb{P}$ the probability measure of CLE$_6$ in
$\overline{\mathbb{D}}$ and by $\mathbb{E}$ the expectation with
respect to $\mathbb{P}$.

Given an annulus $A(r,R)$,  define
\begin{align*}
\rho(r,R)&:=\mbox{the maximal number of disjoint yellow circuits
surrounding 0 in }A(r,R),\\
N(r,R)&:=\mbox{the number of cluster boundary loops surrounding 0 in
$A(r,R)$}.
\end{align*}
The following elementary proposition is crucial for enabling us to
use the scaling limit of critical site percolation on $\mathbb{T}$
to derive explicit limit theorem for our special FPP model.  Note
that item (i) implies item (ii) immediately.
\begin{proposition}[Proposition 2.4 in \cite{Yao18}]\label{p3}
Consider Bernoulli FPP on $\mathbb{T}$ with parameter $p$.  Suppose
$1\leq r<R$. Then we have:
\begin{itemize}
\item[(i)] $T(\partial B(r),\partial B(R))=\rho(r,R)$.
\item[(ii)] There exist $T(\partial B(r),\partial B(R))$ disjoint yellow circuits
surrounding 0 in $A(r,R)$, such that for any geodesic $\gamma$ from
$\partial B(r)$ to $\partial B(R)$ in $A(r,R)$, each closed site in
$\gamma$ is in one of these circuits, with different closed sites
lying in different circuits.
\item[(iii)] Assume that $p=1/2$ and
$B(R)$ has monochromatic boundary condition.  Then $T(\partial
B(r),\partial B(R))$ has the same distribution as $N(r,R)$.
\end{itemize}
\end{proposition}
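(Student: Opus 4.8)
We treat the three items in turn; (ii) follows easily from (i), while (iii) needs a genuinely probabilistic argument.

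\emph{Item (i).} The identity $T(\partial B(r),\partial B(R))=\rho(r,R)$ is a planar min-cut/max-flow duality, with the ``cuts'' being yellow circuits surrounding $0$ and the ``flow'' a single path crossing $A(r,R)$ that pays unit cost at each yellow site. The inequality $T(\partial B(r),\partial B(R))\ge\rho(r,R)$ is the easy half: if $\mathcal C_1,\dots,\mathcal C_k$ are vertex-disjoint yellow circuits surrounding $0$ in $A(r,R)$, then each $\mathcal C_i$ is a Jordan curve separating $B(r)$ from $\partial B(R)$, so any path $\gamma$ crossing $A(r,R)$ meets every $\mathcal C_i$; disjointness forces $\gamma$ to contain at least $k$ distinct yellow sites, hence $T(\gamma)\ge k$, and taking $k=\rho(r,R)$ with an infimum over $\gamma$ gives the bound.

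For the reverse inequality I would induct on $\rho(r,R)$, peeling off the innermost yellow circuit surrounding $0$. If $\rho(r,R)=0$ there is no such circuit, so by the annulus duality for the self-matching lattice $\mathbb T$ (in an annulus, either a monochromatic circuit of one colour surrounds the hole or a crossing of the other colour exists --- recall $\mathcal O(r,R)=\mathcal A_{(1)}^c(r,R)$) there is a blue crossing of $A(r,R)$, so $T=0$. If $\rho(r,R)=k\ge1$, let $\mathcal C$ be the innermost yellow circuit surrounding $0$ in $A(r,R)$. Between $\partial B(r)$ and $\mathcal C$ no yellow circuit surrounds $0$ (it would be enclosed by $\mathcal C$), so that region contains a blue path from $\partial B(r)$ to a neighbour of $\mathcal C$; outside $\mathcal C$ the maximal number of disjoint yellow circuits surrounding $0$ is at most $k-1$ (else, with $\mathcal C$, the maximality defining $\rho$ would be contradicted), so the induction hypothesis, applied to that topological annulus, yields a path from $\mathcal C$ to $\partial B(R)$ with at most $k-1$ yellow sites. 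Splicing these pieces through a single (yellow) site of $\mathcal C$ produces a crossing path with at most $k$ yellow sites, whence $T(\partial B(r),\partial B(R))\le k$.

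\emph{Item (ii).} Fix a maximal family $\mathcal C_1,\dots,\mathcal C_{\rho(r,R)}$ of disjoint yellow circuits surrounding $0$ in $A(r,R)$, which exists by (i). A geodesic $\gamma$ across $A(r,R)$ meets every $\mathcal C_i$ and so contains at least one site of each; but by (i) it has exactly $T(\partial B(r),\partial B(R))=\rho(r,R)$ yellow sites and the $\mathcal C_i$ are disjoint, so $\gamma$ contains exactly one site of each $\mathcal C_i$ and no yellow site outside $\bigcup_i\mathcal C_i$ --- which is the assertion of (ii).

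\emph{Item (iii).} In contrast to (i)--(ii) this is an equality in law only: a thick monochromatic yellow cluster surrounding $0$ contributes its whole radial thickness to $\rho=T$ but essentially only two boundary loops to $N$, so $T$ and $N$ do not agree as random variables. My plan is to use the monochromatic blue boundary condition on $B(R)$ to run an exploration of the cluster boundary loops surrounding $0$ from the outside inward, at each step revealing the outermost undiscovered such loop together with the monochromatic band just inside it. The boundary condition makes the sequence of band colours deterministic (blue outside the outermost loop, then yellow, blue, $\dots$, alternating), and by the domain Markov property the configuration strictly inside the last revealed loop is a fresh critical configuration with a monochromatic boundary condition of the appropriate colour. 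In this picture $N(r,R)$ is the number of revealed loops, while by (i) $T(\partial B(r),\partial B(R))$ is the sum over the yellow bands of the passage times across them; the blue/yellow symmetry of $\mathbf P_{1/2}$ lets one trade a yellow band of thickness $d$ against a nested alternating stack of thin bands, which is the mechanism by which the two counts can agree in distribution while differing pointwise. I expect the main obstacle to be the rigorous implementation of this last step: setting up the exploration and the accompanying coupling so that, band by band, the excess thickness of each fat band is matched by the loops that the refined configuration produces, thereby identifying the law of $T(\partial B(r),\partial B(R))$ with the law of $N(r,R)$.
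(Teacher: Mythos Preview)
Your arguments for items (i) and (ii) are correct and essentially the standard ones. The induction in (i) and the pigeonhole count in (ii) are exactly the right ideas; the only care needed is that the inductive step takes place in a topological annulus rather than a round one, which you already flag.

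For item (iii), however, your proposal has a genuine gap. The exploration you describe correctly identifies that the nested loops $\mathcal L_1,\mathcal L_2,\dots$ alternate between inner boundaries of blue and of yellow clusters, and that a single fat yellow band can contribute many disjoint yellow circuits to $\rho$ but only two loops to $N$. But the mechanism you then invoke --- ``trade a yellow band of thickness $d$ against a nested alternating stack of thin bands'' via blue/yellow symmetry --- is not actually specified, and it is not clear how an exploration run on a \emph{single} configuration would produce the required coupling. The domain Markov property tells you the conditional law inside each revealed loop, but it does not by itself relate the thickness of a yellow band to a count of loops in some other configuration.

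The argument in \cite{Yao18} (sketched in the present paper in the proof of Lemma~\ref{ll2}) bypasses exploration entirely and builds an explicit measure-preserving bijection. Given a configuration $\omega$ with $\rho(r,R)=n$, list the successive outermost yellow circuits $\mathcal C_1,\dots,\mathcal C_n$ surrounding $0$, with $D_k$ the interior of $\mathcal C_k$; then flip the colours of all sites in the alternate bands $D_1\setminus D_2,\ D_3\setminus D_4,\dots$ (the last band being $D_n$ or $D_{n-1}\setminus D_n$ according to the parity of $n$). Under $\mathbf P_{1/2}$ colour-flipping on any fixed set of sites is measure-preserving, and one checks that the image configuration has exactly $n$ cluster boundary loops surrounding $0$ in $A(r,R)$: the circuits $\mathcal C_1,\dots,\mathcal C_n$ become, after the flip, the outermost monochromatic circuits just inside $n$ nested loops of alternating orientation. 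This gives a bijection $\{\rho(r,R)=n\}\to\{N(r,R)=n\}$ for each $n$, hence the equality in law. The ``blue/yellow symmetry'' you allude to is precisely this colour-switching trick; once you name it, the proof of (iii) is a few lines rather than a delicate coupling.
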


For $1\leq r<R$, denote by $S(r,R)$ (resp. $S_r$) the maximal number
of disjoint yellow circuits surrounding 0 and intersecting $A(r,R)$
(resp. $\partial B(r)$).

\begin{lemma}\label{l10}
There exist constants $C_1,\ldots,C_4>0$ and $K>1$, such that for
all $1\leq r<R$ and $x\geq K\log_2(R/r)$,
\begin{align}
&\mathbf{P}_{1/2}[\rho(r,R)\geq x]\leq C_1\exp(-C_2x),\label{e80}\\
&\mathbf{P}_{1/2}[S(r,R)\geq x]\leq C_3\exp(-C_4x).\label{e79}
\end{align}
Hence, there exists a constant $C_5>0$, such that for all $1\leq
r\leq R/2$,
\begin{equation*}
\mathbf{E}_{1/2}[S^4(r,R)]\leq C_5\log^4(R/r).
\end{equation*}
\end{lemma}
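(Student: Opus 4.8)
The plan is as follows. It suffices to establish the two tail estimates (\ref{e80}) and (\ref{e79}); the fourth--moment bound is then routine, since
\begin{equation*}
\mathbf{E}_{1/2}[S^4(r,R)]=\int_0^\infty 4x^3\,\mathbf{P}_{1/2}[S(r,R)\ge x]\,dx\le \int_0^{K\log_2(R/r)}4x^3\,dx+\int_{K\log_2(R/r)}^\infty 4x^3 C_3e^{-C_4x}\,dx,
\end{equation*}
and, substituting $x=K\log_2(R/r)+y$ in the second integral, both terms are $\le C\log^4(R/r)$ once $r\le R/2$ (so that $\log_2(R/r)\ge1$).

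To prove (\ref{e80}), first use Proposition \ref{p3}(i) to write $\rho(r,R)=T(\partial B(r),\partial B(R))$, so that it is enough to produce, with probability $\ge 1-C_1e^{-C_2x}$, a path from $\partial B(r)$ to $\partial B(R)$ meeting fewer than $x$ closed sites. Let $m:=\lceil\log_2(R/r)\rceil$ and split $A(r,R)$ into the dyadic annuli $A^{(j)}:=A(r2^j,r2^{j+1})$, $0\le j\le m-1$. For each $j$ let $E_j$ be the event that $A^{(j)}$ contains a blue circuit surrounding $0$; by RSW, $\mathbf{P}_{1/2}[E_j]\ge c_0>0$ uniformly in $j,r,R$, and the $E_j$ are independent. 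The idea is to build a crossing path which, in every annulus $A^{(j)}$ on which $E_j$ holds, runs for free along such a blue circuit (this also allows angular repositioning at no cost), and which, on every maximal run of consecutive indices where $E_j$ fails, takes a geodesic across the corresponding sub-annulus. This yields
\begin{equation*}
\rho(r,R)\ \le\ \sum_{j=0}^{m-1}W_j,
\end{equation*}
where $W_j$ is a ``local cost'' random variable measurable with respect to the configuration in a bounded number of neighbouring dyadic annuli, dominated up to an absolute multiplicative constant by the number of disjoint yellow circuits surrounding $0$ in an annulus of bounded aspect ratio centered near scale $j$, and small on the (likely) event that the adjacent $E_{j\pm1}$ hold. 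A second, elementary RSW input is that the number of disjoint yellow circuits surrounding $0$ in, or of disjoint yellow crossings of, an annulus of bounded aspect ratio has an exponential tail, uniformly in the position and size of the annulus; hence each $W_j$ has a uniform finite exponential moment $\mathbf{E}_{1/2}[e^{\lambda W_j}]\le C_0$ for some fixed $\lambda>0$, and $(W_j)_{0\le j\le m-1}$ is a $k$--dependent sequence for an absolute $k$. Then (\ref{e80}) follows from a standard large--deviation bound: splitting $\{0,\dots,m-1\}$ into $k+1$ classes of mutually independent indices and applying a Chernoff estimate in each, one gets $\mathbf{P}_{1/2}[\sum_j W_j\ge x]\le C_1e^{-C_2x}$ as soon as $x$ exceeds a fixed multiple of $m\asymp\log_2(R/r)$, which fixes $K$.

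For (\ref{e79}), observe that any yellow circuit surrounding $0$ that merely \emph{intersects} $A(r,R)$ is contained in $A(r/2,2R)$, or has minimal radius $<r/2$ and hence crosses $A(r/2,r)$, or has maximal radius $>2R$ and hence crosses $A(R,2R)$; consequently
\begin{equation*}
S(r,R)\ \le\ \rho(r/2,2R)+\big(\text{\# disjoint yellow crossings of }A(r/2,r)\big)+\big(\text{\# disjoint yellow crossings of }A(R,2R)\big)+C .
\end{equation*}
The first term has the required tail by (\ref{e80}) (with $K$ enlarged by a bounded factor, since $\log_2(4R/r)\le\log_2(R/r)+2$), and the remaining two by the uniform exponential tail for crossing counts of bounded--aspect annuli. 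A union bound gives (\ref{e79}).

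I expect the main obstacle to be the second step: making the bound $\rho(r,R)\le\sum_j W_j$ precise, i.e.\ defining $W_j$ local and with uniformly bounded exponential moments so that it genuinely dominates the contribution of an optimal crossing path in and around $A^{(j)}$, including the angular bookkeeping on the (rare) runs where several consecutive annuli lack a blue circuit, and controlling the (at worst exponential in the run length, hence summable against the exponentially small probability of a long run) effect of conditioning on such runs. The remaining ingredients---the two RSW facts, the $k$--dependent Chernoff bound, and the reductions for $S(r,R)$ and for the fourth moment---are routine. Let me also note that one cannot simply quote a tail bound for $c_n$ here: since $\rho(r,R)\le c_R$ only trivially, and $\rho(r,R)$ for $r$ large is much smaller than $c_R$, the dyadic decomposition is needed precisely to capture the dependence on $R/r$ rather than on $R$.
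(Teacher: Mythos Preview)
Your approach is correct and the overall shape matches what the paper does, but the paper's own proof is much shorter because it outsources the hard part. For (\ref{e80}) the paper simply invokes Proposition~\ref{p3}(i) together with Lemma~2.5 of \cite{Yao18}; your dyadic/blue--circuit/Chernoff scheme is essentially a reconstruction of that cited lemma. So you are not doing anything different---you are just filling in what the paper quotes.

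One remark on your ``main obstacle'': the angular bookkeeping you worry about disappears once you use Proposition~\ref{p3}(i) properly. If $\mathcal C$ is any blue circuit surrounding $0$ inside $A(r,R)$, then
\[
T(\partial B(r),\partial B(R))=T(\partial B(r),\mathcal C)+T(\mathcal C,\partial B(R))
\]
\emph{exactly}, because movement along the blue circuit is free. Iterating over the successive blue circuits in the annuli where $E_j$ holds, the total cost is an honest sum of $\rho$'s over the maximal gaps where $E_j$ fails; no endpoint matching is needed. So your $W_j$'s can be taken to be the gap costs, and the only randomness to control is the gap lengths (geometric tails) versus $\rho$ across an aspect--ratio--$2^\ell$ annulus (mean $O(\ell)$, exponential tail by RSW$+$BK). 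This makes the $k$--dependent Chernoff step straightforward.

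For (\ref{e79}) you and the paper use the same idea with slightly different decompositions. The paper writes
\[
S(r,R)\le \rho(r,R)+S_r+S_R,
\]
where $S_\rho$ denotes the maximal number of disjoint yellow circuits surrounding $0$ that intersect $\partial B(\rho)$, and then shows each $S_\rho$ has a uniform exponential tail via RSW and BK (any such circuit either lies in $A(\rho/2,2\rho)$ or crosses one of $A(\rho/2,\rho)$, $A(\rho,2\rho)$). Your version, enlarging to $\rho(r/2,2R)$ plus two crossing counts, is equally valid; the paper's decomposition is a little cleaner because it avoids enlarging the annulus (so no adjustment of $K$ is needed) and the ``$+C$'' you append is unnecessary. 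The fourth--moment deduction is exactly as you wrote it.
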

\begin{proof}
Combining item (i) in Proposition \ref{p3} and Lemma 2.5 in
\cite{Yao18}, we get (\ref{e80}).

Using RSW, FKG and BK inequality, it is easy to prove that there
exists a constant $C_6>0$, such that for all $r\geq 1$ and $x\geq
1$,
\begin{equation*}
\mathbf{P}_{1/2}[S_r\geq x]\leq \exp(-C_6x).
\end{equation*}
Since $S(r,R)\leq \rho(r,R)+S_r+S_R$, the above inequality and
(\ref{e80}) imply (\ref{e79}) immediately.
\end{proof}

\section{Supercritical regime}
In this section, we will prove Theorem \ref{t3} and Corollary
\ref{c1}.  We first introduce Russo's formula for random variables
in Section \ref{Russo}.  This formula plays a central role in the
proof of Theorem \ref{t3}, since it allows us to study how the
expectation of a random variable varies when the percolation
parameter $p$ varies. Then we prove (\ref{t31}) and (\ref{t35}) in
Sections \ref{mean} and \ref{var}, respectively.  The proof of
Corollary \ref{c1} is given in Section \ref{cor}.

For convenience, in the proofs of this section we always assume
without loss of generality that $L(p)$ is large, say, $L(p)\geq 20$.
So we suppose that $p\leq p_0$ for some fixed $p_0\in(1/2,1)$.  It
is easy to see by (\ref{e11}) that $\mathbf{E}_p[T^2(0,\mathcal
{C}_{\infty})]$ is uniformly bounded for $p\in[p_0,1]$, which
implies that (\ref{t31}) and (\ref{t35}) hold for $p\in[p_0,1]$
immediately.
\subsection{Russo's formula}\label{Russo}
We begin with some notation.  Given a percolation configuration
$\omega=\{\omega(u)\}_{u\in \mathbb{V}}\in\Omega$ and a site
$v\in\mathbb{V}$, let
\begin{equation*}
\omega^v(u):=\left\{
\begin{aligned}
&\omega(u) &\mbox{ if }u\neq v,\\
&0 &\mbox{if }u=v.
\end{aligned}
\right. \quad\omega_v(u):=\left\{
\begin{aligned}
&\omega(u) &\mbox{ if }u\neq v,\\
&1 &\mbox{if }u=v.
\end{aligned}
\right.
\end{equation*}
(Note that in the percolation literature $\omega^v$ usually means
that we set $v$ to be 1; here we adopt the above setting since, for
our Bernoulli FPP, a site is open when it takes the value 0, while
in the percolation literature, a site is open usually means that the
site takes the value 1.)  For a random variable $X=X(\omega)$,
define the increment of $X$ at $v$ by
\begin{equation*}
\delta_vX(\omega):=X(\omega^v)-X(\omega_v).
\end{equation*}
\begin{lemma}[Russo's formula, see e.g. Theorem 2.32 in \cite{Gri99}]\label{l6}
Let $X$ be a random variable which is defined in terms of the states
of only finitely many sites of $\mathbb{T}$.  Then
\begin{equation*}
\frac{d}{dp}\mathbf{E}_p[X]=\sum_{v\in
\mathbb{V}}\mathbf{E}_p[\delta_v X].
\end{equation*}
\end{lemma}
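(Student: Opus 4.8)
The plan is to reduce the statement to a finite, entirely algebraic computation by exploiting the hypothesis that $X$ depends only on the states of finitely many sites $V_0=\{v_1,\dots,v_m\}\subset\mathbb{V}$. Writing $\eta\in\{0,1\}^{V_0}$ for a configuration on $V_0$ and using independence of the coordinates, we have
\begin{equation*}
\mathbf{E}_p[X]=\sum_{\eta\in\{0,1\}^{V_0}}X(\eta)\prod_{v\in V_0}q_v(\eta_v),\qquad q_v(0)=p,\quad q_v(1)=1-p,
\end{equation*}
so that $p\mapsto\mathbf{E}_p[X]$ is a polynomial in $p$ which is affine in each coordinate; it is visibly differentiable, and there are no integrability or interchange-of-limit issues since a function of finitely many Bernoulli sites is bounded.

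First I would differentiate this finite sum term by term. Each $q_v$ is affine in $p$ with $q_v'(0)=1$ and $q_v'(1)=-1$, so the product rule gives
\begin{equation*}
\frac{d}{dp}\mathbf{E}_p[X]=\sum_{v\in V_0}\ \sum_{\eta\in\{0,1\}^{V_0}}X(\eta)\bigl(\mathbf{1}_{\{\eta_v=0\}}-\mathbf{1}_{\{\eta_v=1\}}\bigr)\prod_{u\in V_0\setminus\{v\}}q_u(\eta_u).
\end{equation*}
For each fixed $v$ I would group the inner sum according to the value of $\eta_v$: using that $X(\omega^v)$ and $X(\omega_v)$ do not depend on $\omega(v)$, the terms with $\eta_v=0$ assemble into $\sum_{\eta'}X(\omega^v)\prod_{u\neq v}q_u(\eta'_u)=\mathbf{E}_p[X(\omega^v)]$ and those with $\eta_v=1$ into $-\mathbf{E}_p[X(\omega_v)]$, where $\eta'$ ranges over $\{0,1\}^{V_0\setminus\{v\}}$. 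Hence the inner sum is exactly $\mathbf{E}_p[X(\omega^v)-X(\omega_v)]=\mathbf{E}_p[\delta_v X]$.

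Finally I would observe that $\delta_v X\equiv 0$ whenever $v\notin V_0$, so $\sum_{v\in V_0}\mathbf{E}_p[\delta_vX]=\sum_{v\in\mathbb{V}}\mathbf{E}_p[\delta_vX]$ and the identity follows. There is no deep obstacle here; the only points demanding care are bookkeeping ones. The sign convention of this paper is nonstandard — an ``open'' site carries the value $0$, so $\omega^v$ forces the value $0$ (probability $p$) while $\omega_v$ forces the value $1$ (probability $1-p$) — and one must keep $q_v'(0)=+1$, $q_v'(1)=-1$ aligned with the definition $\delta_vX=X(\omega^v)-X(\omega_v)$ so that the signs match; one should also note explicitly that the ostensibly infinite sum on the right-hand side is in fact finite. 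As an alternative one could argue probabilistically via the standard coupling $\{U_v\}$: for the right derivative one writes the difference quotient $h^{-1}(\mathbf{E}_{p+h}[X]-\mathbf{E}_p[X])$, notes that increasing $p$ to $p+h$ flips each site of $V_0$ from value $1$ to value $0$ with probability exactly $h$, independently, that the event of two or more such flips in $V_0$ has probability $O(h^2)$ while $X$ stays bounded, and that a single flip at $v$ changes $X$ by $\delta_vX$ evaluated with the remaining coordinates $\mathbf{P}_p$-distributed up to an $O(h)$ total-variation correction; letting $h\downarrow 0$ yields $\sum_{v\in V_0}\mathbf{E}_p[\delta_vX]$, and the left derivative is handled symmetrically.
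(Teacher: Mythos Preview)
Your proof is correct; the algebraic differentiation of the finite product measure is the standard argument, and you have correctly tracked the nonstandard sign convention $\omega^v(v)=0$, $\omega_v(v)=1$ used in this paper. Note, however, that the paper itself does not give a proof of this lemma: it is stated as a known result with a reference to Theorem~2.32 in Grimmett's book, so there is nothing to compare against.
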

\subsection{Study of the mean}\label{mean}
Suppose $p>1/2$.  For simplicity of notation, let
$T(p):=T(0,\partial B(L(p)))$.  To prove (\ref{t31}), we write
\begin{align}
&\mathbf{E}|T_{1/2}(0,\partial
B(L(p)))-T_p(0,\mathcal {C}_{\infty}(p))|\nonumber\\
&\leq\mathbf{E}|T_{1/2}(0,\partial B(L(p)))-T_p(0,\partial
B(L(p)))|+\mathbf{E}|T_p(0,\partial B(L(p)))-T_p(0,\mathcal
{C}_{\infty}(p))|\nonumber\\
&=\{\mathbf{E}_{1/2}[T(p)]-\mathbf{E}_p[T(p)]\}+\{\mathbf{E}_p[T(0,\mathcal
{C}_{\infty})]-\mathbf{E}_p[T(p)]\}.\label{e74}
\end{align}
We will bound the two terms on the right-hand side of (\ref{e74})
separately, starting with the first term.
\begin{lemma}\label{l7}
There is a constant $C>0$ such that for all $p>1/2$,
\begin{equation*}
\mathbf{E}_{1/2}[T(p)]-\mathbf{E}_p[T(p)]\leq C.
\end{equation*}
\end{lemma}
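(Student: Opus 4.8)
The plan is to use Russo's formula (Lemma \ref{l6}) to control the derivative in $p$ of $\mathbf{E}_p[T(p)]$, where recall $T(p) = T(0,\partial B(L(p)))$ with $L(p)$ fixed once $p$ is fixed. The subtlety is that $T(p)$ as a random variable depends on $p$ only through the radius $L(p)$, so for the Russo computation I would first fix the radius: write $T^{(\ell)} := T(0,\partial B(\ell))$, a random variable depending on finitely many sites, and study $q \mapsto \mathbf{E}_q[T^{(\ell)}]$ for $q \in [1/2, p]$ with $\ell = L(p)$ held fixed. Then $\mathbf{E}_{1/2}[T(p)] - \mathbf{E}_p[T(p)] = \mathbf{E}_{1/2}[T^{(\ell)}] - \mathbf{E}_p[T^{(\ell)}] = -\int_{1/2}^{p} \frac{d}{dq}\mathbf{E}_q[T^{(\ell)}]\, dq = -\int_{1/2}^{p} \sum_{v} \mathbf{E}_q[\delta_v T^{(\ell)}]\, dq$. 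Since making a site open (value $0$) can only decrease a passage time, $\delta_v T^{(\ell)} = T^{(\ell)}(\omega^v) - T^{(\ell)}(\omega_v) \le 0$, so the integrand is nonpositive and the whole expression is $\le$ (a sum of absolute values); I need an upper bound on $\sum_v \mathbf{E}_q[|\delta_v T^{(\ell)}|]$ that is integrable against $dq$ over $[1/2,p]$ and yields a constant.

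The key step is the pivotality estimate. A site $v$ has $\delta_v T^{(\ell)} \ne 0$ only if $v$ is "pivotal" in the sense that flipping it changes the passage time, and then $|\delta_v T^{(\ell)}| \le 1$ because toggling a single site changes $T^{(\ell)}$ by at most one (one can reroute any geodesic around a single hexagon at cost at most $O(1)$; more carefully, $|\delta_v T^{(\ell)}| \le$ some absolute constant, using that adding/removing one closed site changes the minimal number of yellow circuits crossed by at most $O(1)$ — this is where Proposition \ref{p3}(i), $T(\partial B(r),\partial B(R)) = \rho(r,R)$, and the circuit picture help bound the increment). For $v$ to be pivotal, by the standard geometric characterization (as in the usual arm-event analysis of FPP geodesics), there must be a four-arm configuration around $v$ up to the distance separating $v$ from $\partial B(\ell)$ and from $0$: roughly, if $v$ is at Euclidean distance about $\dist(v)$ from the relevant boundary, pivotality forces $\mathcal{A}_4$ in an annulus of inner radius $1$ and outer radius comparable to $\min(\dist(v,0),\dist(v,\partial B(\ell)))$. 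So $\mathbf{P}_q[\delta_v T^{(\ell)} \ne 0] \le C\,\mathbf{P}_q[\mathcal{A}_4(1, \text{dist}_v)]$ where $\text{dist}_v$ denotes that minimal distance (and this is valid for $\text{dist}_v \le \ell = L(q)$ since we integrate $q$ over $[1/2,p]$, hence $L(q) \ge L(p) = \ell$ throughout the range — this monotonicity of $L$ is what makes the a priori arm estimates applicable uniformly).

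Then I would sum: $\sum_v \mathbf{P}_q[\mathcal{A}_4(1,\text{dist}_v)] \le C\sum_{k=1}^{\ell} k \cdot \mathbf{P}_q[\mathcal{A}_4(1,k)] \le C\sum_{k=1}^{\ell} k \cdot \mathbf{P}_{1/2}[\mathcal{A}_4(1,k)]$ using (\ref{e7}), and $k^2 \mathbf{P}_{1/2}[\mathcal{A}_4(1,k)] \asymp k^2 \cdot k^{-(2-\varepsilon)}$-type bounds won't quite suffice on their own; instead I want $\sum_v \mathbf{E}_q[\delta_v T^{(\ell)}]$ related to $\frac{d}{dq}\mathbf{E}_q[\rho(1,\ell)]$, and the crucial input is the near-critical scaling relation: by (\ref{e8}), $|q-1/2|\,L(q)^2\,\mathbf{P}_{1/2}[\mathcal{A}_4(1,L(q))] \asymp 1$, and combined with quasi-multiplicativity and the fact that $\ell = L(p) \le L(q)$, the sum $\sum_{k=1}^{\ell} k\, \mathbf{P}_{1/2}[\mathcal{A}_4(1,k)]$ is $\asymp \ell^2 \mathbf{P}_{1/2}[\mathcal{A}_4(1,\ell)] \asymp 1/(|q-1/2| \cdot (L(q)/\ell)^{?})$; plugging into $\int_{1/2}^p dq$ and using that $\int_{1/2}^{p} \frac{dq}{(p-1/2)^{?}}$-type integrals converge to a constant is exactly where Kesten's scaling relations (\ref{e8}), (\ref{e14}) make the bound come out as an absolute constant independent of $p$.

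The main obstacle I expect is precisely this last estimate: getting the bound $\sum_{v}\mathbf{E}_q[|\delta_v T^{(\ell)}|] \le C/(q - 1/2)$ (or a bound whose integral over $q \in [1/2, p]$ is uniformly bounded), which requires carefully combining (a) the geometric pivotality reduction to a four-arm event at scale $\min(\text{dist to }0, \text{dist to }\partial B(\ell))$, (b) extendability/quasi-multiplicativity to sum the four-arm probabilities over all $v \in B(\ell)$, and (c) the scaling relation (\ref{e8}) to convert $L(q)^2\mathbf{P}_{1/2}[\mathcal{A}_4(1,L(q))]$ into $1/|q-1/2|$, while correctly handling the ratio $L(q)/L(p)$ as $q$ ranges over $[1/2,p]$. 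One must also be slightly careful that the increment bound $|\delta_v T^{(\ell)}| \le C$ is genuinely uniform and that the exchange of $\frac{d}{dq}$ and the (finite) sum in Russo's formula is legitimate here, which it is since $T^{(\ell)}$ depends on only finitely many sites.
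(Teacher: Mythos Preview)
Your overall strategy --- Russo's formula, reduce pivotality to arm events, sum via quasi-multiplicativity, then integrate and invoke the scaling relation (\ref{e8}) --- matches the paper. But the crucial step, ``pivotality forces $\mathcal{A}_4$ in an annulus of inner radius $1$ and outer radius comparable to $\min(\mathrm{dist}(v,0),\mathrm{dist}(v,\partial B(\ell)))$'', is not justified and is in fact the whole difficulty. A geodesic for this Bernoulli FPP is \emph{not} monochromatic: it passes through closed (yellow) sites. So the two arms of the geodesic emanating from $v$ are not blue arms, and you do not get an alternating 4-arm event to the full scale for free. What Proposition~\ref{p3}(ii) gives you is that $v$ (closed in $\omega_v$) lies on a yellow circuit surrounding $0$, and the geodesic segments adjacent to $v$ are blue only up to the \emph{next} closed site on the geodesic, which lies on a \emph{different} yellow circuit. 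The paper's proof exploits this: it shows $\mathcal{E}_v\subset\mathcal{F}_v$, where $\mathcal{F}_v$ is the event that for some intermediate scale $r$ one has $\mathcal{A}_4(v;1,r)$ together with a \emph{monochromatic} four-arm event $\mathcal{A}_{(1111)}(v;r,2^K)$ out to scale $2^K\asymp|v|$. It then bounds $\mathbf{P}_h[\mathcal{F}_v]$ by summing over dyadic $r$, using Lemma~\ref{l4} (the monochromatic 4-arm probability is smaller than the polychromatic one by a power of the aspect ratio) and quasi-multiplicativity to recover $\mathbf{P}_h[\mathcal{F}_v]\le C\,\mathbf{P}_{1/2}[\mathcal{A}_4(1,|v|)]$. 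This is the missing idea in your plan. For $v$ near $\partial B(L(p))$ there is an additional boundary argument using the half-plane $3$-arm exponent.

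A smaller point: the final integration is cleaner than you suggest. Once you have $\mathbf{P}_h[\mathcal{E}_v]\le C\,\mathbf{P}_{1/2}[\mathcal{A}_4(1,|v|)]$, the dyadic sum over $v\in B(L(p))$ gives $\sum_v\mathbf{P}_h[\mathcal{E}_v]\le C\,L(p)^2\,\mathbf{P}_{1/2}[\mathcal{A}_4(1,L(p))]$ \emph{uniformly} in $h\in[1/2,p]$ (here (\ref{e7}) applies throughout since $|v|\le L(p)\le L(h)$). Integrating this constant over $h\in[1/2,p]$ gives $(p-1/2)L(p)^2\mathbf{P}_{1/2}[\mathcal{A}_4(1,L(p))]$, which is bounded by (\ref{e8}). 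There is no need for a bound of the form $C/(q-1/2)$, and no ratio $L(q)/L(p)$ enters.
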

\begin{proof}
For each $v\in B(L(p))$, define the event
\begin{equation*}
\mathcal {E}_v:=\{\mbox{for $\omega_v$, $\exists$ a geodesic
$\gamma$ from 0 to $\partial B(L(p))$ in $B(L(p))$ with $v\in
\gamma$}\}.
\end{equation*}
By Lemma \ref{l6}, applying Russo's formula to $T(p)$ for
$\mathbf{E}_h$, where $1/2\leq h\leq p$, one obtains
\begin{equation}\label{e73}
-\frac{d}{dh}\mathbf{E}_h[T(p)]=-\sum_{v\in
B(L(p))}\mathbf{E}_h[\delta_v T(p)]=\sum_{v\in
B(L(p))}\mathbf{P}_h[\mathcal {E}_v].
\end{equation}
Now let us show that there is a universal constant $C_1>0$, such
that for $v\in A(2,L(p))$,
\begin{equation}\label{e68}
\mathbf{P}_h[\mathcal {E}_v]\leq C_1\mathbf{P}_{1/2}[\mathcal
{A}_4(1,|v|)].
\end{equation}
Take $K=K(v)=\lfloor\log_2|v|\rfloor$.  We start by analyzing the
case that $v$ is far from the boundary of $B(L(p))$, that is, $v\in
A(2,L(p)/2)$.  Define the event
\begin{equation*}
\mathcal {F}_v:=\{\exists~r\mbox{ such that $1\leq r\leq 2^K$ and
$\mathcal {A}_4(v;1,r)\cap\mathcal {A}_{(1111)}(v;r,2^K)$ occurs}\}.
\end{equation*}
Note that $\mathcal {A}_4(v;1,2^K)\subset\mathcal {F}_v$ since we
have set $\mathcal {A}_{(1111)}(v;2^K,2^K)=\Omega$.

\begin{figure}
\begin{center}
\includegraphics[height=0.35\textwidth]{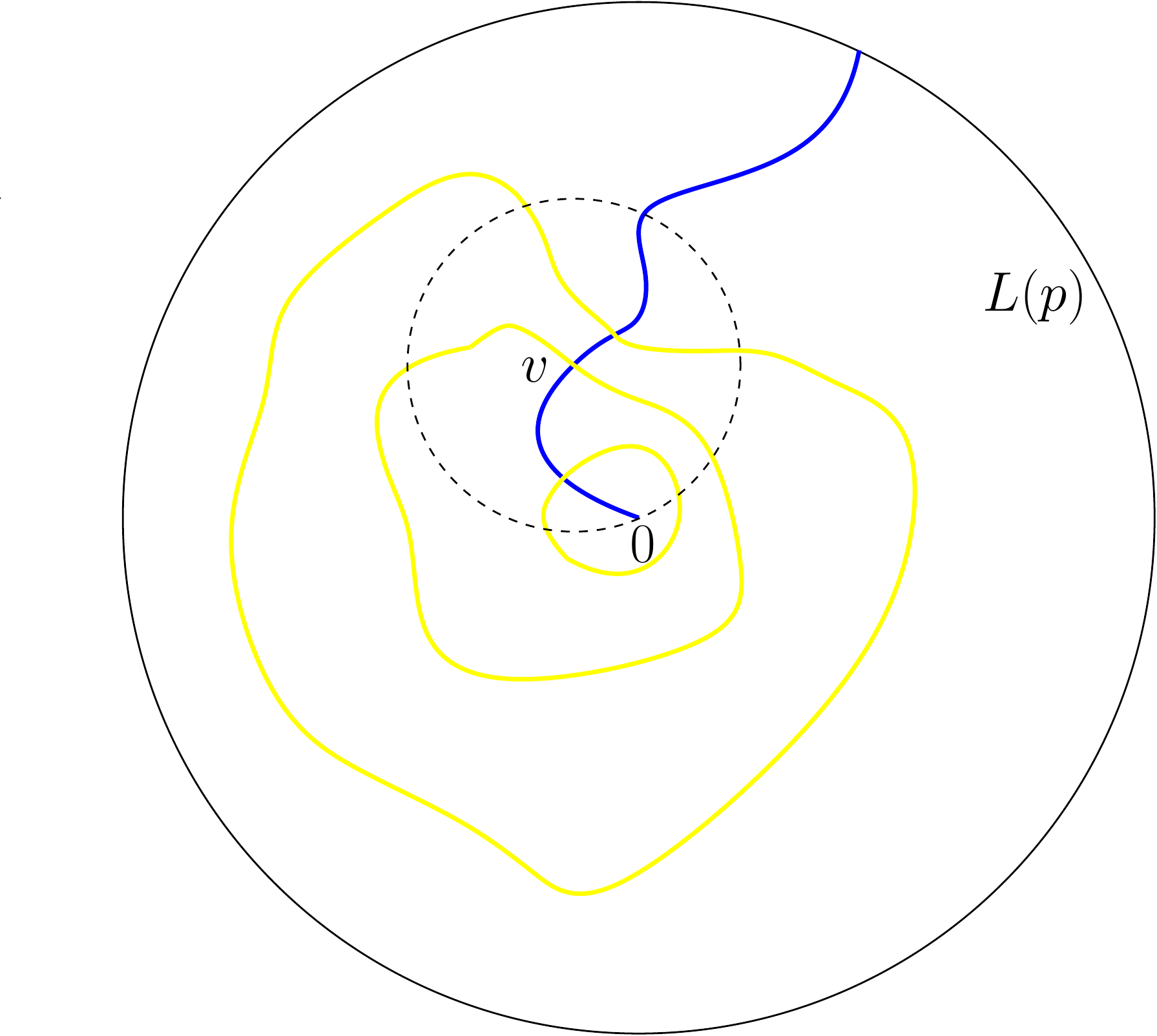}
\caption{For $v\in A(2,L(p)/2)$, the event $\mathcal
{E}_v\subset\mathcal {F}_v$.}\label{fig3}
\end{center}
\end{figure}

Assume that $v\in A(2,L(p)/2)$.  By item (ii) in Proposition
\ref{p3}, we have $\mathcal {E}_v\subset\mathcal {F}_v$.  See Figure
\ref{fig3}.  Let us bound the probability of the event $\mathcal
{F}_v$. By considering the smallest $r$ satisfying $\mathcal {F}_v$
with $2^i\leq r\leq 2^{i+1}$, we get that there exist universal
constants $\varepsilon,C_2,\dots,C_5>0$ such that
\begin{align}
\mathbf{P}_h[\mathcal {F}_v]&\leq
\sum_{i=0}^{K-1}\mathbf{P}_h[\mathcal
{A}_4(v;1,2^i)]\mathbf{P}_h[\mathcal {A}_{(1111)}(v;2^{i+1},2^K)]\nonumber\\
&\leq \sum_{i=0}^{K-1}C_2\mathbf{P}_{1/2}[\mathcal
{A}_4(1,2^i)]\mathbf{P}_{1/2}[\mathcal {A}_{(1111)}(2^{i+1},2^K)]~~\mbox{by (\ref{e7})}\nonumber\\
&\leq \sum_{i=0}^{K-1}C_3\mathbf{P}_{1/2}[\mathcal
{A}_4(1,2^i)]\mathbf{P}_{1/2}[\mathcal {A}_4(2^{i+1},2^K)]2^{-\varepsilon(K-i-1)}~~\mbox{by Lemma \ref{l4}}\nonumber\\
&\leq \sum_{i=0}^{K-1}C_4\mathbf{P}_{1/2}[\mathcal
{A}_4(1,2^K)]2^{-\varepsilon(K-i-1)}~~\mbox{ by
quasi-multiplicativity}\nonumber\\
&\leq C_5\mathbf{P}_{1/2}[\mathcal {A}_4(1,|v|)]~~\mbox{by
extendability.}\label{e70}
\end{align}
Then for $v\in A(2,L(p)/2)$, we get (\ref{e68}) from (\ref{e70})
since $\mathcal {E}_v\subset\mathcal {F}_v$.

Now we bound $\mathbf{P}_h[\mathcal {E}_v]$ for the sites $v$ which
are close to the boundary of $B(L(p))$, that is, $v\in
A(L(p)/2,L(p))$.  Let us mention that in the proof of Lemma
\ref{l7}, one can avoid analyzing this case by introducing an
intermediate measure $\widetilde{\mathbf{P}}_h$ satisfying
$\widetilde{\mathbf{P}}_h|_{A(L(p)/2,L(p))}=\mathbf{P}_{1/2}|_{A(L(p)/2,L(p))}$
and
$\widetilde{\mathbf{P}}_h|_{B(L(p)/2)}=\mathbf{P}_h|_{B(L(p)/2)}$.
However, in the study of the variance in Section \ref{var} we will
need to handle the boundary issue.  So we give the analysis here,
and will use it in Section \ref{var}.

Assume that $v\in A(L(p)/2,L(p))$ and $1<r\leq |v|$.  Define the
event
\begin{align*}
\widehat{\mathcal {A}}_4(v;1,r):=&\{\mbox{$\exists$ four alternating
arms from $v$ to $\partial (B(v,r)\cap B(L(p)))$ in $B(v,r)\cap B(L(p))$.}\\
&\mbox{Furthermore, three of them are from $v$ to $\partial B(v,r)$,
with color sequence $(101)$}\}.
\end{align*}
When $v$ touches $\partial B(L(p))$, we just interpret
$\widehat{\mathcal {A}}_4(v;1,r)$ as the event that there exist
three arms from $v$ to $\partial B(v,r)$ in $B(v,r)\cap B(L(p))$,
with color sequence $(101)$.  It is clear that when $r\leq
L(p)-|v|$, we have $\widehat{\mathcal {A}}_4(v;1,r)=\mathcal
{A}_4(v;1,r)$.

By using the fact that the polychromatic half-plane 3-arm exponent
is 2, which is larger than the 4-arm exponent, it is standard to
show that there is some universal constant $C_6>0$, such that
\begin{equation}\label{e71}
\mathbf{P}_h[\widehat{\mathcal {A}}_4(v;1,r)]\leq C_6
\mathbf{P}_{1/2}[\mathcal {A}_4(1,r)].
\end{equation}
Similarly to the event $\mathcal {F}_v$, for $v\in A(L(p)/2,L(p))$
we define the event
\begin{equation*}
\widehat{\mathcal {F}}_v:=\{\exists~r\mbox{ such that $1\leq r\leq
2^K$ and $\widehat{\mathcal {A}}_4(v;1,r)\cap\mathcal
{A}_{(1111)}(v;r,2^K)$ occurs}\}.
\end{equation*}
Suppose that $v\in A(L(p)/2,L(p))$.  Using item (ii) in Proposition
\ref{p3}, we have $\mathcal {E}_v\subset\widehat{\mathcal {F}}_v$.
By considering the smallest $r$ satisfying $\widehat{\mathcal
{F}}_v$ with $2^i\leq r\leq 2^{i+1}$,  we obtain that
\begin{align}
\mathbf{P}_h[\widehat{\mathcal {F}}_v]&\leq
\sum_{i=0}^{K-1}\mathbf{P}_h[\widehat{\mathcal
{A}}_4(v;1,2^i)]\mathbf{P}_h[\mathcal {A}_{(1111)}(v;2^{i+1},2^K)]\nonumber\\
&\leq \sum_{i=0}^{K-1}C_7\mathbf{P}_{1/2}[\mathcal
{A}_4(1,2^i)]\mathbf{P}_{1/2}[\mathcal {A}_4(2^{i+1},2^K)]2^{-\varepsilon(K-i-1)}~~\mbox{by (\ref{e71}), (\ref{e7}) and Lemma \ref{l4}}\nonumber\\
&\leq C_8\mathbf{P}_{1/2}[\mathcal {A}_4(1,|v|)]~~\mbox{by the proof
of (\ref{e70}).}\label{e72}
\end{align}
Then for $v\in A(L(p)/2,L(p))$, we derive (\ref{e68}) from
(\ref{e72}) since $\mathcal {E}_v\subset\widehat{\mathcal {F}}_v$.
This combined with the above argument for $v\in A(2,L(p)/2)$ ends
the proof of (\ref{e68}).

Take $M$ such that $2^M\leq L(p)<2^{M+1}$.  We have
\begin{align}
\sum_{v\in A(2,L(p))}\mathbf{P}_h[\mathcal {E}_v]&\leq
\sum_{i=1}^{M}\sum_{v\in
A(2^i,2^{i+1})}C_1\mathbf{P}_{1/2}[\mathcal {A}_4(1,|v|)]~~\mbox{ by (\ref{e68})}\nonumber\\
&\leq
\sum_{i=1}^{M}C_94^i\mathbf{P}_{1/2}[\mathcal {A}_4(1,2^i)]\nonumber\\
&\leq \sum_{i=1}^{M}C_{10}4^i\frac{\mathbf{P}_{1/2}[\mathcal
{A}_4(1,L(p))]}{\mathbf{P}_{1/2}[\mathcal {A}_4(2^i,L(p))]}~~\mbox{
by quasi-multiplicativity}\nonumber\\
&\leq
\sum_{i=1}^{M}C_{11}2^{-\varepsilon(M-i)}(L(p))^2\mathbf{P}_{1/2}[\mathcal
{A}_4(1,L(p))]~~\mbox{ by (\ref{e9})}\nonumber\\
&\leq C_{12}(L(p))^2\mathbf{P}_{1/2}[\mathcal
{A}_4(1,L(p))].\label{e92}
\end{align}
Finally, by integrating (\ref{e73}) from $1/2$ to $p$ and using
(\ref{e92}) and (\ref{e8}), we have
\begin{equation*}
\mathbf{E}_{1/2}[T(p)]-\mathbf{E}_p[T(p)]\leq
C_{13}+C_{12}(p-1/2)(L(p))^2\mathbf{P}_{1/2}[\mathcal
{A}_4(1,L(p))]\leq C,
\end{equation*}
which completes the proof.
\end{proof}
\begin{remark}
Let us mention that without using Lemma 1 one can derive (\ref{e70})
by a weaker result.  It is noted just below (2.8) of \cite{BN11}, by
using a theorem from Reimer (Theorem 3 in \cite{BN11}), one easily
obtains $\mathbf{P}_{1/2}[\mathcal {A}_{(111111)}(2^{i+1},2^K)]\leq
\mathbf{P}_{1/2}[\mathcal {A}_{(011111)}(2^{i+1},2^K)]$.  This
inequality together with $\mathbf{P}_{1/2}[\mathcal
{A}_6(2^{i+1},2^K)]\leq \mathbf{P}_{1/2}[\mathcal
{A}_4(2^{i+1},2^K)]2^{-\varepsilon(K-i-1)}$ enables us to derive
(\ref{e70}) by a more complicated argument.  We will not give the
details here.
\end{remark}

Let us now bound the second term of (\ref{e74}).
\begin{lemma}\label{l8}
There is a constant $C>0$ such that for all $p>1/2$,
\begin{equation*}
\mathbf{E}_p[T(0,\mathcal {C}_{\infty})-T(p)]\leq C.
\end{equation*}
\end{lemma}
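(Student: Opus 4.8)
The plan is to estimate $T(0,\mathcal{C}_\infty)-T(p)$ by comparing the geodesic from $0$ to $\partial B(L(p))$ with a geodesic from $0$ to the infinite cluster, where the difference is entirely controlled by what happens \emph{outside} $B(L(p))$. First I would observe that under $\mathbf{P}_p$ with $p>1/2$, on the event $\mathcal{O}(L(p),2L(p))$ (a blue circuit surrounding $0$ in the annulus $A(L(p),2L(p))$), any geodesic from $0$ to $\partial B(L(p))$ can be extended at zero extra cost to a path that reaches this blue circuit, and since that circuit lies in the infinite cluster with high probability, we would get $T(0,\mathcal{C}_\infty)\le T(p)$ on a large-probability event. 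More carefully, the extension reaches $\mathcal{C}_\infty$ provided the blue circuit in $A(L(p),2L(p))$ is connected to infinity by blue, which is guaranteed (up to a further high-probability event) by the one-arm estimate (\ref{e11}) at scale $2L(p)\ge L(p)$. So on an event of probability at least $1-C_1\exp(-C_2)$, we have $T(0,\mathcal{C}_\infty)-T(p)\le 0$.

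Next I would handle the complementary event. On $\mathcal{O}(L(p),2L(p))^c$ we use the crude deterministic bound $T(0,\mathcal{C}_\infty)-T(p)\le T(0,\mathcal{C}_\infty)$, and more generally for each $k\ge 1$ we would look at the first scale $2^kL(p)$ at which a blue circuit surrounding $0$ appears in $A(2^{k-1}L(p),2^kL(p))$ that connects to $\mathcal{C}_\infty$; on the event that the first such good scale is $k$, we have $T(0,\mathcal{C}_\infty)-T(p)\le T(0,\partial B(2^kL(p)))$, and by Proposition \ref{p3}(i) together with Lemma \ref{l10} (or directly, the exponential tail (\ref{e80}) applied at scale ratio $2^k$), $T(0,\partial B(2^kL(p)))$ is of order at most $Ck$ with exponential tails. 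Meanwhile, by (\ref{e40}) and (\ref{e11}), the probability that the good scale is $\ge k$ is at most $C_1\exp(-C_2 2^{k-1})$, which decays far faster than any polynomial in $k$. Summing $\sum_{k\ge 1}\mathbf{E}_p\big[(T(0,\mathcal{C}_\infty)-T(p))\mathbf{1}_{\{\text{good scale}=k\}}\big]$ over $k$ then gives a bound uniform in $p$.

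To make the second-moment inputs rigorous I would use that under $\mathbf{P}_p$, for $R\ge L(p)$ the variable $T(0,\partial B(R))$ is stochastically dominated by $T_{1/2}(0,\partial B(R))$ restricted to behavior that, via quasi-multiplicativity at scales $\le L(p)$ and exponential decay at scales $\ge L(p)$, has exponential tails with rate depending only on $R/L(p)$; this is exactly the content combining (\ref{e80}) with the exponential-decay estimates (\ref{e11})--(\ref{e40}). The main obstacle I anticipate is the bookkeeping near the boundary: the ``extension at zero cost'' argument needs the geodesic endpoint on $\partial B(L(p))$ to actually be a blue site or to be joinable to the surrounding blue circuit without accumulating more than $O(1)$ closed sites, so I would need a short argument — analogous to the boundary analysis via $\widehat{\mathcal{A}}_4$ in the proof of Lemma \ref{l7} — showing that the number of closed sites a geodesic picks up between $\partial B(L(p))$ and the first good blue circuit has a uniformly bounded expectation. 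Once that is in place, collecting the on-event bound (which is $\le 0$) and the off-event sum yields $\mathbf{E}_p[T(0,\mathcal{C}_\infty)-T(p)]\le C$ for all $p\in(1/2,p_0]$, and the range $p\in[p_0,1]$ is already covered by the uniform second-moment remark at the start of Section 3.
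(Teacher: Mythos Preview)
Your decomposition into the first ``good scale'' and the sum over scales is exactly the paper's approach, but there is a genuine gap in the bounds you use on each event. First, the claim that on $\mathcal{O}(L(p),2L(p))\cap\mathcal{A}_1(2L(p),\infty)$ one has $T(0,\mathcal{C}_\infty)-T(p)\le 0$ is false: the geodesic from $0$ to $\partial B(L(p))$ need not end at a blue site, and there may be further yellow circuits between $\partial B(L(p))$ and the blue circuit in $A(L(p),2L(p))$. You recognize this at the end and propose a $\widehat{\mathcal{A}}_4$-style fix, but that is unnecessary (see below). Second, and more seriously, on the event that the first good scale is $k\ge 1$, the bound $T(0,\mathcal{C}_\infty)-T(p)\le T(0,\partial B(2^kL(p)))$ is valid but too crude: $T(0,\partial B(2^kL(p)))$ is \emph{not} of order $Ck$. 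Its expectation under $\mathbf{P}_p$ is of order $\log(2^kL(p))\asymp k+\log L(p)$, and the sum over $k$ (even with the double-exponential weight $\exp(-C_22^{k-1})$) then leaves a residual $C\log L(p)$ term which is not uniform in $p$.

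Both issues disappear with one clean inequality, which is what the paper uses. On the event $\mathcal{W}_j:=\{j=\min\{i\ge 0:\mathcal{O}(2^iL(p),2^{i+1}L(p))\cap\mathcal{A}_1(2^iL(p),\infty)\text{ occurs}\}\}$, the blue circuit in $A(2^jL(p),2^{j+1}L(p))$ lies in $\mathcal{C}_\infty$ and surrounds $B(L(p))$, so by subadditivity and Proposition~\ref{p3}(i),
\[
T(0,\mathcal{C}_\infty)-T(p)\ \le\ T(\partial B(L(p)),\mathcal{C}_\infty)\ \le\ \rho(L(p),2^{j+1}L(p))\ \le\ S(L(p),2^{j+1}L(p)).
\]
This holds for all $j\ge 0$, including $j=0$. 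By Lemma~\ref{l10} and stochastic monotonicity in $p$, one has $\mathbf{E}_p[S^2(L(p),2^{j+1}L(p))]\le C(j+1)^2$ uniformly in $p$; Cauchy--Schwarz with $\mathbf{P}_p[\mathcal{W}_j]\le C_1\exp(-C_22^j)$ then gives the uniform bound $\sum_{j\ge 0}C(j+1)\exp(-C_22^j/2)<\infty$. No boundary analysis via $\widehat{\mathcal{A}}_4$ is needed.
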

\begin{proof}
By (\ref{e11}), (\ref{e40}) and FKG, there exist $C_1,C_2>0$, such
that for all $p>1/2$ and $R\geq L(p)$,
\begin{equation}\label{e12}
\mathbf{P}_{p}[\mathcal {O}(R,2R)\cap \mathcal {A}_1(R,\infty)]\geq
1-C_1\exp\left(-C_2\frac{R}{L(p)}\right).
\end{equation}
For $j\geq 0$, define the event
\begin{equation*}
\mathcal {W}_j:=\{j=\min\{i\geq 0: \mathcal
{O}(2^{i}L(p),2^{i+1}L(p))\cap\mathcal {A}_1(2^{i}L(p),\infty)\mbox{
occurs}\}\}.
\end{equation*}
Then we have
\begin{align*}
&\mathbf{E}_p[T(0,\mathcal {C}_{\infty})-T(p)]\\
&\leq\sum_{j=0}^{\infty}\mathbf{E}_p[S(L(p),2^{j+1}L(p))\mathbf{1}_{\mathcal {W}_j}]\\
&\leq
\sum_{j=0}^{\infty}\left\{\mathbf{E}_p[S^2(L(p),2^{j+1}L(p))]\right\}^{1/2}\left\{\mathbf{P}_p[\mathcal
{W}_j]\right\}^{1/2}~~\mbox{ by Cauchy-Schwarz inequality}\\
&\leq \sum_{j=0}^{\infty}C_3(j+1)\exp(-C_22^j)~~\mbox{ by
(\ref{e12}) and Lemma \ref{l10}},
\end{align*}
which concludes the proof.
\end{proof}
Combining Lemmas \ref{l7} and \ref{l8}, the two terms on the
right-hand side of (\ref{e74}) are bounded, and we obtain
(\ref{t31}).
\subsection{Study of the variance}\label{var}
Suppose $p>1/2$.  Recall $T(p):=T(0,\partial B(L(p)))$.  Let
$J(p):=\lceil\log_2(L(p))\rceil+1$.  To prove (\ref{t35}), we write
\begin{align}
&|\Var_{1/2}[T(p)]-\Var_p[T(0,\mathcal {C}_{\infty})]|\nonumber\\
&\leq|\Var_{1/2}[T(p)]-\Var_p[T(p)]|+|\Var_p[T(0,\mathcal
{C}_{\infty})]-\Var_p[T(0,\mathcal
{C}_{J(p)})]|\nonumber\\
&\quad+|\Var_p[T(0,\mathcal {C}_{J(p)})]-\Var_p[T(p)]|,\label{e75}
\end{align}
where $\mathcal {C}_{J(p)}$ is an open circuit surrounding 0 and is
defined in Section \ref{term2}.  In Sections \ref{term1},
\ref{term2} and \ref{term3}, we will bound the three terms on the
right-hand side of (\ref{e75}), respectively.  Let us now focus on
the first term.
\subsubsection{Bound on
$|\Var_{1/2}[T(p)]-\Var_p[T(p)]|$}\label{term1}
\begin{lemma}\label{l11}
There is a constant $C>0$ such that for all $p>1/2$,
\begin{equation*}
|\Var_{1/2}[T(p)]-\Var_p[T(p)]|\leq C.
\end{equation*}
\end{lemma}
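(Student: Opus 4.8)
The plan is to control the derivative $\frac{d}{dh}\Var_h[T(p)]$ for $1/2 \leq h \leq p$ and integrate, just as in the proof of Lemma \ref{l7}, but now applying Russo's formula to the random variable $T^2(p)$ (and $T(p)$) rather than $T(p)$ alone. Since $\Var_h[T(p)] = \mathbf{E}_h[T^2(p)] - (\mathbf{E}_h[T(p)])^2$, we have
\begin{equation*}
-\frac{d}{dh}\Var_h[T(p)] = -\sum_{v\in B(L(p))}\mathbf{E}_h[\delta_v(T^2(p))] + 2\mathbf{E}_h[T(p)]\left(-\frac{d}{dh}\mathbf{E}_h[T(p)]\right).
\end{equation*}
The second term is already manageable: by \eqref{e73} and \eqref{e92} the factor $-\frac{d}{dh}\mathbf{E}_h[T(p)]$ is bounded by $C(L(p))^2\mathbf{P}_{1/2}[\mathcal{A}_4(1,L(p))]$, and $\mathbf{E}_h[T(p)]$ is $O(\log L(p))$ uniformly in $h\in[1/2,p]$ (by Theorem \ref{t4} and monotonicity, or a direct first-moment bound via Lemma \ref{l10}); integrating $(p-1/2)$ times this against \eqref{e8} gives a contribution that is $O(\log L(p))$, which is \emph{not} bounded. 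So the naive splitting is too lossy, and one must instead exploit a cancellation: $\delta_v(T^2(p)) = (\delta_v T(p))(T(\omega^v) + T(\omega_v))$, and on the event $\mathcal{E}_v$ we have $\delta_v T(p) = T(\omega^v) - T(\omega_v) = -1$ (flipping $v$ from closed to open decreases the passage time by exactly $1$ when $v$ lies on a geodesic, by item (ii) of Proposition \ref{p3}), so that $-\mathbf{E}_h[\delta_v(T^2(p))] = \mathbf{E}_h[(T(\omega^v)+T(\omega_v))\mathbf{1}_{\mathcal{E}_v}]$.

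The key step, then, is to show
\begin{equation*}
\sum_{v\in B(L(p))}\mathbf{E}_h\!\left[(T(\omega^v)+T(\omega_v))\mathbf{1}_{\mathcal{E}_v}\right] - 2\mathbf{E}_h[T(p)]\sum_{v\in B(L(p))}\mathbf{P}_h[\mathcal{E}_v] = O\!\left((L(p))^2\mathbf{P}_{1/2}[\mathcal{A}_4(1,L(p))]\right),
\end{equation*}
i.e. that replacing $T(\omega^v)+T(\omega_v)$ by its unconditional mean $2\mathbf{E}_h[T(p)]$ inside the $\mathcal{E}_v$-sum costs at most a bounded amount after integration. Writing $T(\omega^v)+T(\omega_v) - 2\mathbf{E}_h[T(p)] = (T(\omega^v)-\mathbf{E}_h[T(p)]) + (T(\omega_v)-\mathbf{E}_h[T(p)])$, one bounds each term by conditioning on the event $\mathcal{E}_v$ (equivalently on the four-arm-type configuration near $v$ that forces $v$ onto a geodesic) and using that, \emph{given} this local event, the passage time $T(p)$ is the sum of the local structure near $v$ — which is $O(\log|v|)$ with exponential tails by Lemma \ref{l10}, hence contributes a bounded amount after being multiplied by $\mathbf{P}_h[\mathcal{E}_v]$ and summed — plus a ``far'' part whose conditional mean differs from $\mathbf{E}_h[T(p)]$ by $O(1)$ (the conditioning only affects a logarithmic-sized annulus). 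More concretely: $|T(\omega^v) - \mathbf{E}_h[T(p)]|$ can be split as the number of yellow circuits that cross $A(1,|v|)$ near $v$ (controlled by $S(1,C|v|)$, with fourth moment $O(\log^4|v|)$ by Lemma \ref{l10}) plus a term comparable to the fluctuation of $T$ restricted outside a ball around $v$, whose expectation under the conditional law is $O(1)$ uniformly. The same decomposition that produced \eqref{e68}–\eqref{e92} — splitting into $v\in A(2,L(p)/2)$ and the boundary annulus $A(L(p)/2,L(p))$, using the half-plane three-arm bound \eqref{e71} near $\partial B(L(p))$ — then shows each piece sums to $O((L(p))^2\mathbf{P}_{1/2}[\mathcal{A}_4(1,L(p))])$ after the extra logarithmic factor from the moment bounds is absorbed into $2^{-\varepsilon(M-i)}$.

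I expect the main obstacle to be precisely this decorrelation estimate: showing that conditioning on $\mathcal{E}_v$ shifts the conditional mean of $T(p)$ away from $\mathbf{E}_h[T(p)]$ by only $O(1)$ (not $O(\log|v|)$), uniformly in $v$ and in $h\in[1/2,p]$, and handling the boundary sites $v\in A(L(p)/2,L(p))$ where one must also argue that conditioning does not create an unbounded bias. This will likely require an independence/gluing argument: the event $\mathcal{E}_v$ is essentially measurable with respect to the configuration in a bounded-ratio annulus around $v$ (plus a connection to $\partial B(L(p))$), and outside that annulus the passage time behaves as in the unconditioned model up to an additive $S$-type correction with exponential tails. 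Once this is in place, integrating $-\frac{d}{dh}\Var_h[T(p)]$ from $1/2$ to $p$ against \eqref{e8} gives $|\Var_{1/2}[T(p)]-\Var_p[T(p)]| \leq C$, as claimed.
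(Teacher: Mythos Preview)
Your approach is essentially the paper's: differentiate $\Var_h[T(p)]$ via Russo's formula, reduce to the decorrelation estimate $\mathbf{E}_h[\mathbf{1}_{\mathcal{E}_v}T(p)]=\mathbf{P}_h[\mathcal{E}_v]\mathbf{E}_h[T(p)]+O(1)\mathbf{P}_{1/2}[\mathcal{A}_4(1,|v|)]$ (this is exactly the paper's claim (\ref{e25})), sum as in (\ref{e92}), and integrate against (\ref{e8}).

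The one point where your sketch is imprecise is the assertion that ``$\mathcal{E}_v$ is essentially measurable with respect to the configuration in a bounded-ratio annulus around $v$'': this is false as stated, since whether $v$ lies on a geodesic from $0$ to $\partial B(L(p))$ depends on the whole configuration. The paper's mechanism for your ``independence/gluing argument'' is to condition on events $\mathcal{G}_{i,j}=\mathcal{G}_i^-\cap\mathcal{G}_j^+$ asserting the existence of open circuits in $A(2^{-i-1}|v|,2^{-i}|v|)$ and $A(2^j|v|,2^{j+1}|v|)$; on $\mathcal{G}_{i,j}$ the event $\mathcal{E}_v$ \emph{does} become measurable with respect to $A(2^{-i-1}|v|,2^{j+1}|v|)$, hence independent of $T(0,\partial B(2^{-i-1}|v|))+T(\partial B(2^{j+1}|v|),\partial B(L(p)))$. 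To control the residual correlation with $\mathcal{G}_{i,j}$ itself, the paper replaces $\mathcal{E}_v$ by the local superset $\widetilde{\mathcal{E}}_v$ (measurable in $B(v,|v|/3)$, independent of $\mathcal{G}_{i,j}$, with $\mathbf{P}_h[\widetilde{\mathcal{E}}_v]\leq C\mathbf{P}_{1/2}[\mathcal{A}_4(1,|v|)]$), and bounds the passage time in the middle annulus by $X_v+Y_v(i,j)$ with $\mathbf{E}_h[X_v+Y_v(i,j)\mid\mathcal{G}_{i,j}]=O(i+j)$; the geometric tail $\mathbf{P}_h[\mathcal{G}_{i,j}]\leq e^{-C(i+j)}$ then kills the $(i+j)$ factor and yields the $O(1)$ you need, with no leftover $\log|v|$.
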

\begin{proof}
Similarly to the proof of Lemma \ref{l7}, we shall use Russo's
formula again, although the proof turns out to be more involved.
Recall the definition of the event $\mathcal {E}_v$ defined in the
proof of Lemma \ref{l7}.  For $1/2\leq h\leq p$, applying Lemma
\ref{l6}, one obtains
\begin{align}
&\frac{d}{dh}\{\mathbf{E}_h[T^2(p)]-\mathbf{E}_h^2[T(p)]\}\nonumber\\
&=\sum_{v\in B(L(p))}\{\mathbf{E}_h [\delta_v T^2(p)]-2\mathbf{E}_h[T(p)]\mathbf{E}_h [\delta_{v}T(p)]\}\nonumber\\
&=\sum_{v\in B(L(p))}\{\mathbf{E}_h [T^2(p)(\omega^v)-T^2(p)(\omega_v)]-2\mathbf{E}_h[T(p)]\mathbf{E}_h [T(p)(\omega^v)-T(p)(\omega_v)]\}\nonumber\\
&=\sum_{v\in
B(L(p))}\{\mathbf{E}_h[\mathbf{1}_{\mathcal {E}_v}]+2\mathbf{E}_h[\mathbf{1}_{\mathcal {E}_v}]\mathbf{E}_h[T(p)]-2\mathbf{E}_h[\mathbf{1}_{\mathcal {E}_v}T(p)(\omega_v)]\}\nonumber\\
&=\sum_{v\in B(L(p))}(1-2h)\mathbf{P}_h[\mathcal {E}_v]+\sum_{v\in
B(L(p))}2\{\mathbf{P}_h[\mathcal
{E}_v]\mathbf{E}_h[T(p)]-\mathbf{E}_h[\mathbf{1}_{\mathcal
{E}_v}T(p)]\}.\label{e78}
\end{align}
Unlike the expectation, it is not clear if $\Var_{h}[T(p)]$ is
monotonic in $h$.  So we need to bound the absolute value of the
above derivative.  It turns out that the key ingredient is to prove
the following claim: For $v\in B(4,L(p))$,
\begin{equation}\label{e25}
\mathbf{E}_h[\mathbf{1}_{\mathcal {E}_v}T(p)]=\mathbf{P}_h[\mathcal
{E}_v]\mathbf{E}_h[T(p)]+O(1)\mathbf{P}_{1/2}[\mathcal
{A}_4(1,|v|)].
\end{equation}

To show the claim (\ref{e25}), we will control the decorrelation of
$\mathbf{1}_{\mathcal {E}_v}T(p)$ and give the lower and upper
bounds of $\mathbf{E}_h[\mathbf{1}_{\mathcal {E}_v}T(p)]$
separately.  We start with some notation.  Assume $v\in
A(4,L(p)/2)$.  Define the events
\begin{align*}
&\mathcal {G}_i^-:=\left\{
\begin{aligned}
&\{i=\min\{k\geq 1:\mathcal {O}(2^{-k-1}|v|,2^{-k}|v|)\mbox{ occurs}\}\}& \mbox{ if }1\leq i\leq \lfloor\log_2|v|\rfloor-1,\\
&\{\forall~1\leq k\leq i-1,\mathcal {O}(2^{-k-1}|v|,2^{-k}|v|)\mbox{
does not occur}\} &\mbox{if }i=\lfloor\log_2|v|\rfloor.
\end{aligned}
\right.\\
&\mathcal {G}_j^+:=\left\{
\begin{aligned}
&\{j=\min\{k\geq 1:\mathcal {O}(2^k|v|,2^{k+1}|v|)
\mbox{ occurs}\}\}&\mbox{ if }1\leq j\leq \lfloor\log_2(L(p)/|v|)\rfloor-1,\\
&\{\forall~1\leq k\leq j-1, \mathcal {O}(2^k|v|,2^{k+1}|v|)\mbox{
does not occur}\} &\mbox{ if }j=\lfloor\log_2(L(p)/|v|)\rfloor.
\end{aligned}
\right.\\
&\mathcal {G}_{i,j}:=\mathcal {G}_i^-\cap\mathcal {G}_j^+.
\end{align*}
Write $\mathcal {I}=\mathcal
{I}(|v|):=\{1,2,\ldots,\lfloor\log_2|v|\rfloor\}$ and $\mathcal
{J}=\mathcal
{J}(p,|v|):=\{1,2,\ldots,\lfloor\log_2(L(p)/|v|)\rfloor\}$.  By RSW
and FKG, it is standard to show that there exist universal
$C_1,C_2>0$ such that
\begin{equation}\label{e20}
\exp(-C_1(i+j))\leq \mathbf{P}_h[\mathcal {G}_{i,j}]\leq
\exp(-C_2(i+j)).
\end{equation}

Let $\widehat{\mathcal {A}}_4(v;1,r)$ be the event defined in the
proof of Lemma \ref{l7}.  For $v\in B(4,L(p))$, define the event
\begin{align*}
\widetilde{\mathcal {E}}_v:=\left\{
\begin{aligned}
&\{\exists~1\leq r\leq |v|/3\mbox{ s.t. $\mathcal {A}_4(v;1,r)\cap
\mathcal {A}_{(1111)}(v;r,|v|/3)$
occurs}\} &\mbox{if }v\in B(4,L(p)/2),\\
&\{\exists~1\leq r\leq |v|/3\mbox{ s.t. $\widehat{\mathcal
{A}}_4(v;1,r)\cap \mathcal {A}_{(1111)}(v;r,|v|/3)$ occurs}\}
&\mbox{if }v\in B(L(p)/2,L(p)).
\end{aligned}
\right.
\end{align*}
Note that $\mathcal {E}_v\subset \widetilde{\mathcal {E}}_v$.
Similarly to the proof of (\ref{e70}) and (\ref{e72}),  one derives
that there is a universal $C_3>0$, such that
\begin{equation}\label{e21}
\mathbf{P}_h[\widetilde{\mathcal {E}}_v]\leq
C_3\mathbf{P}_{1/2}[\mathcal {A}_4(1,|v|)].
\end{equation}

For $v\in A(4,L(p)/2)$, write
\begin{equation*}
\mathbf{E}_h[\mathbf{1}_{\mathcal {E}_v}T(p)]=\sum_{i\in \mathcal
{I},j\in\mathcal {J}}\mathbf{P}_h[\mathcal
{G}_{i,j}]\mathbf{E}_h[\mathbf{1}_{\mathcal {E}_v}T(p)|\mathcal
{G}_{i,j}].
\end{equation*}
For convenience, let $T(0,\partial B(r))=0$ if $r\leq\sqrt{3}/2$ and
let $T(\partial B(r),\partial B(R))=0$ if $r\geq R$.  Observe that
conditioned on $\mathcal {G}_{i,j}$, the indicator function
$\mathbf{1}_{\mathcal {E}_v}$ and $T(0,\partial
B(2^{-i-1}|v|))+T(\partial B(2^{j+1}|v|),\partial B(L(p)))$ are
independent.  Indeed, conditionally on $\mathcal {G}_{i,j}$, we
distinguish the following four cases: (1) Suppose that $i\in
\mathcal {I}\backslash\{\lfloor\log_2|v|\rfloor\}$ and $j\in
\mathcal {J}\backslash\{\lfloor\log_2(L(p)/|v|)\rfloor\}$.  The
event $\mathcal {E}_v$ is equivalent to the event that for
$\omega_v$, there exists a geodesic $\gamma$ from an open circuit
surrounding 0 in $A(2^{-i-1}|v|,2^{-i}|v|)$ to an open circuit
surrounding 0 in $A(2^j|v|,2^{j+1}|v|)$, with $v\in\gamma$.  So
$\mathcal {E}_v$ is independent of the configuration outside
$A(2^{-i-1}|v|,2^{j+1}|v|)$.  In particular, $\mathcal {E}_v$ is
independent of $T(0,\partial B(2^{-i-1}|v|))+T(\partial
B(2^{j+1}|v|),\partial B(L(p)))$.  (2) Suppose that $i\in \mathcal
{I}\backslash\{\lfloor\log_2|v|\rfloor\}$ and
$j=\lfloor\log_2(L(p)/|v|)\rfloor$.  The event $\mathcal {E}_v$ is
equivalent to the event that for $\omega_v$, there exists a geodesic
$\gamma$ from an open circuit surrounding 0 in
$A(2^{-i-1}|v|,2^{-i}|v|)$ to $\partial B(L(p))$, with $v\in\gamma$.
So $\mathcal {E}_v$ is independent of $T(0,\partial
B(2^{-i-1}|v|))$.  (3) Suppose that $i=\lfloor\log_2|v|\rfloor$ and
$j\in \mathcal {J}\backslash\{\lfloor\log_2(L(p)/|v|)\rfloor\}$. The
argument for this case is similar to that for the case (2).  (4)
Suppose that $i=\lfloor\log_2|v|\rfloor$ and
$j=\lfloor\log_2(L(p)/|v|)\rfloor$.  Then the observation is trivial
since $T(0,\partial B(2^{-i-1}|v|))=T(\partial
B(2^{j+1}|v|),\partial B(L(p)))=0$ in this case.

Then we have
\begin{align*}
&\mathbf{E}_h[\mathbf{1}_{\mathcal {E}_v}T(p)]\\
&\geq\sum_{i\in \mathcal {I},j\in\mathcal {J}}\mathbf{P}_h[\mathcal {G}_{i,j}]\mathbf{E}_h[\mathbf{1}_{\mathcal {E}_v}|\mathcal {G}_{i,j}]\mathbf{E}_h[T(0,\partial B(2^{-i-1}|v|))+T(\partial B(2^{j+1}|v|),\partial B(L(p)))]\\
&\geq\sum_{i\in \mathcal {I},j\in\mathcal {J}}\mathbf{P}_h[\mathcal
{E}_v\mathcal {G}_{i,j}]\{\mathbf{E}_h[T(p)]-\mathbf{E}_h[S(2^{-i-1}|v|,2^{j+1}|v|)]\}\\
&\geq\mathbf{P}_h[\mathcal {E}_v]\mathbf{E}_h[T(p)]-\sum_{i\in
\mathcal {I},j\in\mathcal {J}}C_4(i+j)\mathbf{P}_h[\mathcal
{E}_v\mathcal {G}_{i,j}]~~\mbox{ by Lemma \ref{l10}}.
\end{align*}
Since $\mathcal {E}_v\subset \widetilde{\mathcal {E}}_v$ and
$\widetilde{\mathcal {E}}_v$ is independent of the event $\mathcal
{G}_{i,j}$, we have
\begin{align*}
\mathbf{E}_h[\mathbf{1}_{\mathcal {E}_v}T(p)]&\geq
\mathbf{P}_h[\mathcal {E}_v]\mathbf{E}_h[T(p)]-\sum_{i\in \mathcal
{I},j\in\mathcal
{J}}C_4(i+j)\exp(-C_2(i+j))\mathbf{P}_h[\widetilde{\mathcal
{E}}_v]~~\mbox{
by (\ref{e20})}\\
&\geq \mathbf{P}_h[\mathcal
{E}_v]\mathbf{E}_h[T(p)]-C_5\mathbf{P}_{1/2}[\mathcal
{A}_4(1,|v|)]~~\mbox{ by (\ref{e21})},
\end{align*}
which gives the desired lower bound of
$\mathbf{E}_h[\mathbf{1}_{\mathcal {E}_v}T(p)]$.  To get the upper
bound, we need more notation. For $i\in \mathcal {I},j\in\mathcal
{J}$, define
\begin{align*}
&X_v:=\mbox{the maximal number of disjoint closed paths
connecting the two boundary pieces}\\
&\qquad\quad\mbox{of
$A(v;|v|/3,|v|/2)$},\\
&Y_v(i,j):=\mbox{the maximal number of disjoint closed circuits
surrounding 0 and intersecting}\\
&\qquad\qquad\quad\mbox{ $A(2^{-i-1}|v|,2^{j+1}|v|)\backslash
B(v;|v|/3)$}.
\end{align*}
Observe that
\begin{equation}\label{e77}
X_v+Y_v(i,j)\geq S(2^{-i-1}|v|,2^{j+1}|v|).
\end{equation}
It is clear that $X_v$ is independent of $\mathcal {G}_{i,j}$.
Then, using RSW and BK inequality, it is standard that
\begin{equation}\label{e76}
\mathbf{E}_h[X_v|\mathcal {G}_{i,j}]=\mathbf{E}_h[X_v]\leq
\mathbf{E}_{1/2}[X_v]\leq C_6.
\end{equation}
Since $Y_v(i,j)\leq S(2^{-i-1}|v|,2^{j+1}|v|)$, we deduce that there
are universal constants $C_7,C_8>0$, such that for all $x\geq
K(i+j+2)$, where $K$ is from Lemma \ref{l10},
\begin{align*}
\mathbf{P}_h[Y_v(i,j)\geq x|\mathcal
{G}_{i,j}]&\leq\frac{\mathbf{P}_h[S(2^{-i-1}|v|,2^{j+1}|v|)\geq
x]}{\mathbf{P}_h[\mathcal {G}_{i,j}]}\\
&\leq C_7\exp(-C_8x)\exp(C_1(i+j))\mbox{ by Lemma \ref{l10} and
(\ref{e20}),}
\end{align*}
which implies that there is a universal $C_9>0$, such that
\begin{equation}\label{e17}
\mathbf{E}_h[Y_v(i,j)|\mathcal {G}_{i,j}]\leq C_9(i+j).
\end{equation}
Then for $v\in A(4,L(p)/2)$, we get the desired upper bound as
follows.
\begin{align*}
&\mathbf{E}_h[\mathbf{1}_{\mathcal {E}_v}T(p)]\\
&\leq\sum_{i\in \mathcal {I},j\in\mathcal {J}}\mathbf{P}_h[\mathcal {E}_v,\mathcal {G}_{i,j}]\mathbf{E}_h[T(0,\partial B(2^{-i-1}|v|))+T(\partial B(2^{j+1}|v|),\partial B(L(p)))]\\
&\qquad+\sum_{i\in \mathcal {I},j\in\mathcal
{J}}\mathbf{P}_h[\widetilde{\mathcal {E}}_v]\mathbf{P}_h[\mathcal
{G}_{i,j}]\mathbf{E}_h[X_v+Y_v(i,j)|\mathcal {G}_{i,j}]~~\mbox{
by (\ref{e77})}\\
&\leq \mathbf{P}_h[\mathcal {E}_v]\mathbf{E}_h[T(p)]+\sum_{i\in
\mathcal {I},j\in\mathcal
{J}}C_{10}(i+j)\exp(-C_2(i+j))\mathbf{P}_h[\widetilde{\mathcal
{E}}_v]~~\mbox{
by (\ref{e20}), (\ref{e76}) and (\ref{e17})}\\
&\leq \mathbf{P}_h[\mathcal
{E}_v]\mathbf{E}_h[T(p)]+C_{11}\mathbf{P}_{1/2}[\mathcal
{A}_4(1,|v|)]~~\mbox{ by (\ref{e21})}.
\end{align*}
The above lower and upper bounds yield (\ref{e25}) for $v\in
A(4,L(p)/2)$.  Using (\ref{e21}), the proof for the case of $v\in
B(L(p)/2,L(p))$ is very similar to the case of $v\in B(4,L(p)/2)$;
one needs to obtain lower and upper bounds of
$\mathbf{E}_h[\mathbf{1}_{\mathcal {E}_v}T(p)]$ as above.  So the
proof is omitted here.  Thus our claim (\ref{e25}) is established.
For $v\in A(1,4)$, the proof of the following equation (\ref{e22})
is much simpler than that of (\ref{e25}).  The proof is also
omitted.
\begin{equation}\label{e22}
\mathbf{E}_h[\mathbf{1}_{\mathcal
{E}_v}T(p)]=\mathbf{E}_h[\mathbf{1}_{\mathcal
{E}_v}]\mathbf{E}_h[T(p)]+O(1).
\end{equation}
Combining (\ref{e78}), (\ref{e25}), (\ref{e22}) and the proof of
(\ref{e92}), we have
\begin{align*}
\left|\frac{d}{dh}\{\mathbf{E}_h[T^2(p)]-\mathbf{E}_h^2
[T(p)]\}\right|&\leq C_{12}+C_{13}\sum_{v\in
A(2,L(p))}\mathbf{P}_{1/2}[\mathcal {A}_4(1,|v|)]\\
&\leq C_{12}+C_{14}(L(p))^2\mathbf{P}_{1/2}[\mathcal {A}_4(1,L(p))].
\end{align*}
Finally, by integrating over the interval $[1/2,p]$ and using
(\ref{e8}) we obtain the desired result.
\end{proof}
\subsubsection{Bound on $|\Var_p[T(0,\mathcal
{C}_{\infty})]-\Var_p[T(0,\mathcal {C}_{J(p)})]|$}\label{term2} We
now wish to bound the second term on the right-hand side of
(\ref{e75}). We will use the martingale method introduced in
\cite{KZ97}.  This approach has been used in \cite{DLW17,Yao18}
also.  We start with some notation.

For $j\in \mathbb{N}\cup\{0\}$, we write $A(j):=A(2^j,2^{j+1})$.
Define
\begin{align*}
m(j)&:=\inf\{k\geq j: A(k)\mbox{ contains an open circuit
surrounding 0}\},\\
\mathcal {C}_j&:=\mbox{the innermost open circuit surrounding 0
in }A(m(j)),\\
\mathscr{F}_j&:=\sigma\mbox{-field generated by }\{t(v):v\in
\overline{\mathcal {C}}_j\}.
\end{align*}
Denote by $\mathcal {C}_{-1}$ the origin and by $\mathscr{F}_{-1}$
the trivial $\sigma$-field.  For $p\geq 1/2$ and $q\in \mathbb{N}$,
write
\begin{align*}
T(0,\mathcal {C}_q)-\mathbf{E}_p[T(0,\mathcal
{C}_q)]=\sum_{j=0}^q\left(\mathbf{E}_p[T(0,\mathcal
{C}_q)|\mathscr{F}_j]-\mathbf{E}_p[T(0,\mathcal
{C}_q)|\mathscr{F}_{j-1}]\right):=\sum_{j=0}^q\Delta_j.
\end{align*}
Then $\{\Delta_j\}_{0\leq j\leq q}$ is an $\mathscr{F}_j$-martingale
increment sequence.  Hence,
\begin{equation}\label{e15}
\Var_p[T(0,\mathcal {C}_q)]=\sum_{j=0}^q\mathbf{E}_p[\Delta_j^2].
\end{equation}

Let $(\Omega',\mathscr{F}',\mathbf{P}_p')$ be a copy of
$(\Omega,\mathscr{F},\mathbf{P}_p)$.  Denote by $\mathbf{E}_p'$ the
expectation with respect to $\mathbf{P}_p'$, and by $\omega'$ a
sample point in $\Omega'$. Let $T(\cdot,\cdot)(\omega),m(j,\omega)$
and $\mathcal {C}_j(\omega)$ denote the quantities defined before,
but with explicit dependence on $\omega$. Define
$l(j,\omega,\omega'):=m(m(j,\omega)+1,\omega')$.  We need the
following lemma, which was essentially proved in \cite{KZ97}.  Note
that (\ref{e28}) is standard and follows from RSW and FKG;
(\ref{e35}) is the same as Lemma 2 of \cite{KZ97}.
\begin{lemma}[\cite{KZ97}]\label{l14}
(i) There exists $C>0$, such that for all $j,k\in\mathbb{N}$ and
$p\geq 1/2$, we have
\begin{equation}\label{e28}
\mathbf{P}_p[m(j)\geq j+k]\leq \exp(-Ck).
\end{equation}
(ii) For $j\geq 0$, $\Delta_j$ does not depend on $q$. Furthermore,
\begin{align}
\Delta_j(\omega)=&T(\mathcal {C}_{j-1}(\omega),\mathcal
{C}_j(\omega))(\omega)+\mathbf{E}_p'[T(\mathcal
{C}_j(\omega),\mathcal
{C}_{l(j,\omega,\omega')}(\omega'))(\omega')]\nonumber\\
&-\mathbf{E}_p'[T(\mathcal {C}_{j-1}(\omega),\mathcal
{C}_{l(j,\omega,\omega')}(\omega'))(\omega')].\label{e35}
\end{align}
\end{lemma}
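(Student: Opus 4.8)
The plan is to follow the martingale-increment machinery from \cite{KZ97}, adapting it to our site-percolation setting on $\mathbb{T}$; part (i) is a soft consequence of RSW, while part (ii) is the genuinely delicate identity. For part (i), I would observe that the event $\{m(j)\geq j+k\}$ is contained in the event that none of the $k$ disjoint annuli $A(j),A(j+1),\dots,A(j+k-1)$ contains an open circuit surrounding $0$. Each annulus $A(i)=A(2^i,2^{i+1})$ has a bounded modulus, so by RSW there is a constant $c_0>0$, uniform in $i$ and in $p\geq 1/2$ (using that increasing $p$ only helps), with $\mathbf{P}_p[A(i)\text{ contains an open circuit around }0]\geq c_0$. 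These events are independent across the disjoint annuli, so $\mathbf{P}_p[m(j)\geq j+k]\leq (1-c_0)^k=\exp(-Ck)$ with $C=-\log(1-c_0)>0$. This gives (\ref{e28}).

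For part (ii), the first claim---that $\Delta_j$ does not depend on $q$---comes from the key geometric fact that the passage time $T(0,\mathcal{C}_q)$ decomposes along the nested open circuits: conditionally on $\mathscr{F}_j$ the configuration inside $\overline{\mathcal{C}}_j$ is frozen, and since $\mathcal{C}_j$ is an \emph{open} circuit (zero passage time) a geodesic from $0$ to $\mathcal{C}_q$ can be taken to pass through $\mathcal{C}_j$ at no cost, so $T(0,\mathcal{C}_q)=T(0,\mathcal{C}_j)+T(\mathcal{C}_j,\mathcal{C}_q)$ on the relevant configurations. Hence $\mathbf{E}_p[T(0,\mathcal{C}_q)\mid\mathscr{F}_j]=T(0,\mathcal{C}_j)+\mathbf{E}_p[T(\mathcal{C}_j,\mathcal{C}_q)\mid\mathscr{F}_j]$, and $\mathbf{E}_p[T(\mathcal{C}_j,\mathcal{C}_q)\mid\mathscr{F}_j]$ depends only on the location of $\mathcal{C}_j$ (itself $\mathscr{F}_j$-measurable) and the fresh randomness outside. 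Subtracting the analogous expression for $\mathscr{F}_{j-1}$, the terms involving $\mathcal{C}_q$ for large $q$ must cancel; to make this precise and get the clean formula I would introduce the independent copy $\omega'$ and re-sample the configuration strictly outside $\overline{\mathcal{C}}_j(\omega)$ from $\omega'$, which is exactly the role of the stopping level $l(j,\omega,\omega')=m(m(j,\omega)+1,\omega')$: it produces, in the re-sampled environment, an open circuit $\mathcal{C}_{l(j,\omega,\omega')}(\omega')$ surrounding $\mathcal{C}_j(\omega)$ and lying outside it. With this device, $\mathbf{E}_p[T(0,\mathcal{C}_q)\mid\mathscr{F}_j]$ becomes $T(0,\mathcal{C}_j(\omega))+\mathbf{E}_p'[T(\mathcal{C}_j(\omega),\mathcal{C}_{l(j,\omega,\omega')}(\omega'))(\omega')]+(\text{a term independent of }j)$, and likewise with $\mathscr{F}_{j-1}$; using again the open-circuit decomposition $T(0,\mathcal{C}_j)=T(0,\mathcal{C}_{j-1})+T(\mathcal{C}_{j-1},\mathcal{C}_j)$ and noting the far-field contributions and the $q$-dependent pieces are identical in both conditional expectations and cancel, I arrive at (\ref{e35}).

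The main obstacle is the second part: one must check carefully that the geodesic decomposition through the open circuits is valid---that inserting the zero-cost circuit $\mathcal{C}_j$ into any near-optimal path does not increase the passage time and that the "inside" and "outside" contributions genuinely decouple given $\mathscr{F}_j$---and that the re-sampling via $\omega'$ at the level $l(j,\omega,\omega')$ correctly reproduces the conditional law of the exterior while keeping everything measurable. The bookkeeping of which terms are $q$-dependent and cancel in the difference $\mathbf{E}_p[\,\cdot\mid\mathscr{F}_j]-\mathbf{E}_p[\,\cdot\mid\mathscr{F}_{j-1}]$ is where the argument of \cite{KZ97} does the real work; since the statement asserts this was "essentially proved" there, I would reference that computation and only spell out the adaptations forced by working with sites on $\mathbb{T}$ rather than bonds on $\mathbb{Z}^2$ (in particular that "open circuit" here means a blue circuit of zero-passage-time hexagons, so the cost-free insertion still works verbatim). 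Part (i) I expect to be routine.
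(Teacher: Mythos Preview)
Your proposal is correct and follows essentially the same approach as the paper: part~(i) is exactly the RSW-plus-independence-of-disjoint-annuli argument the paper alludes to, and your outline of part~(ii)---decompose $T(0,\mathcal{C}_q)$ through the open circuits $\mathcal{C}_{j-1},\mathcal{C}_j$, represent the conditional expectation outside via an independent copy $\omega'$, then insert the common intermediate circuit $\mathcal{C}_{l(j,\omega,\omega')}(\omega')$ so that the $q$-dependent far-field term cancels in the difference---is precisely the computation of \cite{KZ97} that the paper cites (and reproduces in the Appendix for the variant~(\ref{e42})). One small wording fix: the cancelling piece $\mathbf{E}_p'[T(\mathcal{C}_{l(j,\omega,\omega')}(\omega'),\mathcal{C}_q(\omega'))(\omega')]$ is not literally ``independent of $j$'' but is \emph{the same} in both conditional expectations once you route both through the same $l(j,\omega,\omega')$, which is what makes the subtraction work.
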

Similarly to (\ref{e15}), the next lemma allows us to express the
variance of $T(0,\mathcal {C}_{\infty})$ in terms of sums of
$\mathbf{E}_p[\Delta_j^2]$.
\begin{lemma}\label{l13}
Suppose $p>1/2$.  We have
\begin{align}
&\mathbf{E}_p[T^2(0,\mathcal {C}_{\infty})]<\infty,\label{e18}\\
&\lim_{q\rightarrow\infty}\Var_p[T(0,\mathcal
{C}_q)]\rightarrow\Var_p[T(0,\mathcal {C}_{\infty})],\label{e19}\\
&\Var_p[T(0,\mathcal
{C}_{\infty})]=\sum_{j=0}^{\infty}\mathbf{E}_p[\Delta_j^2].\label{e27}
\end{align}
\end{lemma}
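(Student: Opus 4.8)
The plan is to prove the three statements in order, using the martingale structure from Lemma \ref{l14} together with the exponential decay estimates of Lemmas \ref{l10} and \ref{l14}(i). First I would establish (\ref{e18}). Recall that $T(0,\mathcal {C}_{\infty})\leq T(0,\mathcal {C}_q)+T(\mathcal {C}_q,\mathcal {C}_{\infty})$ for every $q$, and in fact, choosing $q$ to be the first scale at which the event $\mathcal {O}(2^qL(p),2^{q+1}L(p))\cap \mathcal {A}_1(2^qL(p),\infty)$ occurs (as in the proof of Lemma \ref{l8}), one gets $T(0,\mathcal {C}_{\infty})\leq S(1,2^{j+1}L(p))$ on the event $\mathcal {W}_j$. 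Squaring, applying Cauchy–Schwarz, and using the bound $\mathbf{E}_p[S^4(1,R)]\leq C\log^4(R)$ from Lemma \ref{l10} together with the Gaussian-type decay $\mathbf{P}_p[\mathcal {W}_j]\leq C\exp(-C2^j)$ from (\ref{e12}), the sum $\sum_j \{\mathbf{E}_p[S^4(1,2^{j+1}L(p))]\}^{1/2}\{\mathbf{P}_p[\mathcal {W}_j]\}^{1/2}$ converges, giving (\ref{e18}).

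Next I would prove (\ref{e19}). Since $\mathcal {C}_q\to\mathcal {C}_{\infty}$ in the sense that $T(0,\mathcal {C}_q)\to T(0,\mathcal {C}_{\infty})$ a.s.\ (indeed $T(0,\mathcal {C}_q)$ is eventually constant equal to $T(0,\mathcal {C}_{\infty})$ once $\mathcal {C}_q$ lies outside a geodesic realizing $T(0,\mathcal {C}_{\infty})$, which happens for $q$ large by finiteness of that geodesic and $\mathcal {A}_1(R,\infty)$ eventually holding), it suffices to upgrade a.s.\ convergence to $L^2$ convergence. For this I would show the family $\{T^2(0,\mathcal {C}_q):q\geq 0\}$ is uniformly integrable, which follows from a uniform-in-$q$ bound $\mathbf{E}_p[T^4(0,\mathcal {C}_q)]\leq C$: repeat the decomposition over $\mathcal {W}_j$ used for (\ref{e18}) but with fourth powers, noting $T(0,\mathcal {C}_q)\leq T(0,\mathcal {C}_{\infty})\vee S(1,2^{j+1}L(p))$ on $\mathcal {W}_j$, or more simply observe $T(0,\mathcal {C}_q)\leq S(1,2^q\cdot 2)$-type bounds combined with Lemma \ref{l10}. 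Then dominated/uniform-integrability gives $\mathbf{E}_p[T(0,\mathcal {C}_q)]\to\mathbf{E}_p[T(0,\mathcal {C}_{\infty})]$ and $\mathbf{E}_p[T^2(0,\mathcal {C}_q)]\to\mathbf{E}_p[T^2(0,\mathcal {C}_{\infty})]$, hence (\ref{e19}).

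For (\ref{e27}), combine (\ref{e15}) with (\ref{e19}): since $\Var_p[T(0,\mathcal {C}_q)]=\sum_{j=0}^q\mathbf{E}_p[\Delta_j^2]$ and $\Delta_j$ does not depend on $q$ by Lemma \ref{l14}(ii), the partial sums $\sum_{j=0}^q\mathbf{E}_p[\Delta_j^2]$ are nondecreasing in $q$ and converge to $\Var_p[T(0,\mathcal {C}_{\infty})]<\infty$ by (\ref{e19}) and (\ref{e18}); monotone convergence then yields $\sum_{j=0}^{\infty}\mathbf{E}_p[\Delta_j^2]=\Var_p[T(0,\mathcal {C}_{\infty})]$. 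I expect the main obstacle to be the uniform-in-$q$ moment bound underlying (\ref{e19}): one must carefully control $T(0,\mathcal {C}_q)$ on the event that the first open circuit surrounding scale $2^q$ is very far out, i.e.\ that $m(q)-q$ is large, and show the resulting contributions are summable after taking fourth moments — here the exponential tail (\ref{e28}) for $m(j)$ combined with the polylogarithmic moment bound of Lemma \ref{l10} is exactly what makes the estimate close, but the bookkeeping of the two competing scales (the random outer radius of $\mathcal {C}_q$ versus $L(p)$) requires care.
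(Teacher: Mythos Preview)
Your proposal is correct and close in spirit to the paper's proof, but you miss one elementary observation that collapses your ``main obstacle'' entirely: for \emph{every} $q\geq 1$ one has $T(0,\mathcal{C}_q)\leq T(0,\mathcal{C}_\infty)$ almost surely. Indeed, since $\mathcal{C}_q$ is an open circuit surrounding $0$, either $\mathcal{C}_\infty$ lies entirely outside $\mathcal{C}_q$ (so any path from $0$ to $\mathcal{C}_\infty$ crosses $\mathcal{C}_q$), or $\mathcal{C}_\infty$ meets the interior of $\mathcal{C}_q$, in which case connectivity forces $\mathcal{C}_q\subset\mathcal{C}_\infty$ and a geodesic to $\mathcal{C}_\infty$ can be extended to $\mathcal{C}_q$ along open sites at no extra cost. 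With this in hand, $T^2(0,\mathcal{C}_q)\leq T^2(0,\mathcal{C}_\infty)\in L^1$ by (\ref{e18}), so dominated convergence immediately gives $L^2$ convergence from a.s.\ convergence; no uniform $L^4$ bound or delicate bookkeeping of $m(q)-q$ is needed.

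The paper's route is slightly different again: rather than a.s.\ convergence plus domination, it bounds $\mathbf{E}_p[(T(0,\mathcal{C}_q)-T(0,\mathcal{C}_\infty))^2]$ directly by decomposing over the events $\mathcal{W}_j$ for $j\geq q$ (exactly the decomposition you used for (\ref{e18})), noting that on $\mathcal{W}_j$ the difference is at most $\rho(2^q,2^{j+1})$, and summing via Cauchy--Schwarz with Lemma~\ref{l10} and (\ref{e12}). It then passes from $L^2$ convergence of $T(0,\mathcal{C}_q)$ to convergence of variances via the triangle inequality for $\|\cdot\|_2$ applied to the centered variables. For (\ref{e18}) the paper simply invokes (\ref{e11}), and for (\ref{e27}) your argument matches the paper's. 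So the only real difference is in (\ref{e19}), where your a.s.-plus-UI approach works but is more laborious than either the paper's direct $L^2$ estimate or the domination shortcut above.
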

\begin{proof}
It is clear that (\ref{e18}) follows from (\ref{e11}).  Observe that
almost surely for all $q\geq 1$, $T(0,\mathcal {C}_q)\leq
T(0,\mathcal {C}_{\infty})$.  So for all $q\geq 1$,
\begin{equation*}
\mathbf{E}_p[T^2(0,\mathcal {C}_q)]\leq \mathbf{E}_p[T^2(0,\mathcal
{C}_{\infty})]<\infty.
\end{equation*}

For $j\geq \lceil \log_2(L(p))\rceil$, define the event
\begin{equation*}
\mathcal {W}_j:=\{j=\min\{i\geq \lceil \log_2(L(p))\rceil: \mathcal
{O}(2^{i},2^{i+1})\cap\mathcal {A}_1(2^{i},\infty)\mbox{
occurs}\}\}.
\end{equation*}
It is standard that $\bigcup_{j=\lceil
\log_2(L(p))\rceil}^{\infty}\mathcal {W}_j$ occurs with probability
one.  For $q\geq \lceil \log_2(L(p))\rceil$, there exist absolute
constants $C_1,C_2>0$, such that
\begin{align*}
&\mathbf{E}_p[(T(0,\mathcal {C}_q)-T(0,\mathcal {C}_{\infty}))^2]\\
&\leq\sum_{j=q}^{\infty}\mathbf{E}_p[\rho^2(2^q,2^{j+1})\mathbf{1}_{\mathcal {W}_j}]\\
&\leq
\sum_{j=q}^{\infty}\left\{\mathbf{E}_p[\rho^4(2^q,2^{j+1})]\right\}^{1/2}\left\{\mathbf{P}_p[\mathcal
{W}_j]\right\}^{1/2}~~\mbox{ by Cauchy-Schwarz inequality}\\
&\leq
\sum_{j=q}^{\infty}C_1(j+1-q)^2\exp\left(-C_2\frac{2^j}{L(p)}\right)~~\mbox{
by (\ref{e12}) and Lemma \ref{l10}}.
\end{align*}
This implies
\begin{equation}\label{e29}
\mathbf{E}_p[(T(0,\mathcal {C}_q)-T(0,\mathcal
{C}_{\infty}))^2]\rightarrow 0~~ \mbox{ as }q\rightarrow\infty.
\end{equation}
The triangle inequality for the norm
$\|\cdot\|_2=\sqrt{\mathbf{E}[|\cdot|^2]}$ and (\ref{e29}) give
\begin{align*}
\left|\sqrt{\Var_p[T(0,\mathcal {C}_q)]}-\sqrt{\Var_p[T(0,\mathcal
{C}_{\infty})]}\right|&\leq \sqrt{\Var_p[T(0,\mathcal
{C}_q)-T(0,\mathcal {C}_{\infty})]}\\
&\leq \sqrt{\mathbf{E}_p[(T(0,\mathcal {C}_q)-T(0,\mathcal
{C}_{\infty}))^2]}\rightarrow 0~~\mbox{ as }q\rightarrow\infty.
\end{align*}
This yields (\ref{e19}) immediately.

Item (ii) in Lemma \ref{l14} tells us that for $j\geq 0$, $\Delta_j$
does not depend on $q$.  This fact together with (\ref{e15}) and
(\ref{e19}) gives (\ref{e27}).
\end{proof}

We bound the second term on the right-hand side of (\ref{e75}) in
the following lemma.
\begin{lemma}\label{l15}
There exist universal constants $C_1,\ldots,C_4>0$, such that for
all $p>1/2$,
\begin{align}
&\mathbf{E}_p[\Delta_j^2]\leq C_1\quad\mbox{for all }j\geq 0,\label{e36}\\
&\mathbf{E}_p[\Delta_{J(p)+j}^2]\leq C_2\exp(-C_32^j)\quad\mbox{for all }j\geq 0,\label{e31}\\
&|\Var_p[T(0,\mathcal {C}_{\infty})]-\Var_p[T(0,\mathcal
{C}_{J(p)})]|<C_4.\label{e32}
\end{align}
\end{lemma}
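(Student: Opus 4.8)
The plan is to prove the three bounds \eqref{e36}, \eqref{e31}, \eqref{e32} in order, using the explicit martingale-increment formula \eqref{e35} from Lemma \ref{l14} together with the exponential decay estimates already at our disposal. For \eqref{e36}, I would start from \eqref{e35}, which writes $\Delta_j$ as a sum of three first-passage times of the form $T(\mathcal{C}_a,\mathcal{C}_b)$. Each such quantity is dominated by $S(2^{m(j)},2^{m'})$-type counts of yellow circuits in the relevant annulus, where $m(j)$ and the outer radius are controlled by the overshoot estimate \eqref{e28}. Concretely, $|\Delta_j|\le T(\mathcal{C}_{j-1}(\omega),\mathcal{C}_j(\omega))(\omega)+\mathbf{E}_p'[T(\mathcal{C}_{j-1}(\omega),\mathcal{C}_{l(j,\omega,\omega')}(\omega'))(\omega')]$ up to a similar term, and each piece is at most a sum of geometric overshoots times bounded circuit counts. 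Squaring and taking expectations, I would use Cauchy--Schwarz to split off $\mathbf{E}_p[S^4]\le C\log^4(\cdot)$ from Lemma \ref{l10} against the exponential tails $\mathbf{P}_p[m(j)-j\ge k]\le\exp(-Ck)$ and $\mathbf{P}_p[l(j,\omega,\omega')-m(j,\omega)\ge k]\le\exp(-Ck)$ (the latter again from \eqref{e28} applied under $\mathbf{P}_p'$), so the sum over overshoot scales converges to an absolute constant.

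For \eqref{e31}, the point is that once $j\ge J(p)=\lceil\log_2 L(p)\rceil+1$, we are working at scales above the correlation length, so we may use the \emph{exponential decay in $R/L(p)$} estimates \eqref{e11} and \eqref{e40} (equivalently \eqref{e12}). I would argue that $\Delta_{J(p)+j}$ is nonzero only if no open circuit-plus-one-arm configuration has already formed in the annuli between scale $2^{J(p)}$ and scale $2^{J(p)+j}$; on that event, $|\Delta_{J(p)+j}|$ is again bounded by circuit counts in an annulus of a few scales around $2^{J(p)+j}$, which is $\ge L(p)$. Decomposing on the event $\mathcal{W}_{J(p)+j}$ (smallest index at which $\mathcal{O}\cap\mathcal{A}_1$ occurs, exactly as in the proof of Lemma \ref{l13}) and applying Cauchy--Schwarz with $\mathbf{E}_p[(\text{count})^4]\le C j^4$ against $\mathbf{P}_p[\mathcal{W}_{J(p)+j}]\le C\exp(-C 2^j)$ gives $\mathbf{E}_p[\Delta_{J(p)+j}^2]\le C (j+1)^2\exp(-C2^j)\le C_2\exp(-C_3 2^j)$ after absorbing the polynomial into the exponential.

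Finally \eqref{e32} follows by combining the pieces. By Lemma \ref{l13}, $\Var_p[T(0,\mathcal{C}_\infty)]=\sum_{j\ge0}\mathbf{E}_p[\Delta_j^2]$, and the same martingale decomposition applied with the stopping circuit $\mathcal{C}_{J(p)}$ gives $\Var_p[T(0,\mathcal{C}_{J(p)})]=\sum_{j=0}^{J(p)}\mathbf{E}_p[\Delta_j^2]$ — here I would note that $\Delta_j$ for $j\le J(p)$ is genuinely the same random variable in both decompositions, since item (ii) of Lemma \ref{l14} says $\Delta_j$ does not depend on the terminal index $q$. Hence
\begin{equation*}
|\Var_p[T(0,\mathcal{C}_\infty)]-\Var_p[T(0,\mathcal{C}_{J(p)})]|=\sum_{j=J(p)+1}^{\infty}\mathbf{E}_p[\Delta_j^2]=\sum_{j\ge1}\mathbf{E}_p[\Delta_{J(p)+j}^2]\le\sum_{j\ge1}C_2\exp(-C_32^j)=:C_4<\infty,
\end{equation*}
which is the claim.

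I expect the main obstacle to be \eqref{e36}: one must be careful that the bound on $\mathbf{E}_p[\Delta_j^2]$ is \emph{uniform in $p>1/2$}, so every RSW/FKG input used to control the circuit counts and the overshoot tails must hold at all scales (including those above $L(p)$, where for $p>1/2$ supercriticality only helps), and the coupling of $\omega$ with the independent copy $\omega'$ in \eqref{e35} must be handled so that conditioning on $\mathcal{C}_j(\omega)$ does not spoil the tail estimate for $l(j,\omega,\omega')$. Once the uniform bound \eqref{e36} is in hand together with its sharpened form \eqref{e31} above scale $L(p)$, the summation step \eqref{e32} is routine.
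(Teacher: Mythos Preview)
Your approach to \eqref{e36} and \eqref{e32} is essentially the paper's. For \eqref{e36} the paper obtains exponential tails on each of the three terms in \eqref{e35} directly via a union bound (split on whether the overshoot $m(j)-j$ is large or the circuit count $\rho$ in the resulting annulus is large), rather than through Cauchy--Schwarz against a fourth-moment bound, but your variant works equally well and gives the same uniform constant. For \eqref{e32} the paper does exactly the summation you describe, using \eqref{e15}, \eqref{e27} and \eqref{e31}.

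Your sketch for \eqref{e31}, however, has a genuine gap. The claim that ``$\Delta_{J(p)+j}$ is nonzero only if no open-circuit-plus-one-arm configuration has already formed in the annuli between scale $2^{J(p)}$ and scale $2^{J(p)+j}$'' is not correct. The increment $\Delta_{J(p)+j}$, via \eqref{e35}, contains the two expectation terms $\mathbf{E}_p'[T(\mathcal{C}_{\cdot}(\omega),\mathcal{C}_{l(j,\omega,\omega')}(\omega'))(\omega')]$ taken over the \emph{independent} copy $\omega'$; these are not forced to vanish or to cancel by any event involving only the $\omega$ configuration. Even when $\omega$ already contains an open circuit connected to $\mathcal{C}_\infty$ at a scale below $2^{J(p)+j}$, the $\omega'$-expectations are typically strictly positive (though small). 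Thus the $\mathcal{W}$-decomposition from the proof of Lemma \ref{l13}, which was designed for the single-configuration quantity $T(0,\mathcal{C}_q)-T(0,\mathcal{C}_\infty)$, does not transfer to bounding the individual $\Delta_j$'s.

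The paper's route is instead to bound each term in \eqref{e35} by the number of disjoint \emph{closed} circuits surrounding $0$ that lie outside $B(2^{J(p)+j-1})$. By BK together with \eqref{e11}, the probability of having $k$ such circuits is at most $\bigl(C\exp(-C\,2^{j})\bigr)^{k}$, so for $j$ large each term in \eqref{e35} satisfies $\mathbf{P}_p[\,\cdot\ge k]\le\exp(-C\,2^{j}k)$; the same estimate holds under $\mathbf{P}_p'$ for the $\omega'$-terms, uniformly in the fixed circuit $\mathcal{C}_{J(p)+j-1}(\omega)$. This yields $\mathbf{P}_p\bigl[|\Delta_{J(p)+j}|\ge \exp(-C2^{j})+x\bigr]\le\exp(-C\,2^{j}x)$ and hence $\mathbf{E}_p[\Delta_{J(p)+j}^2]\le C_2\exp(-C_3 2^{j})$; small $j$ are absorbed into the constant via \eqref{e36}. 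The idea missing from your sketch is to work directly with the closed-circuit count above scale $2^{J(p)+j-1}$ (in both $\omega$ and $\omega'$) rather than with the $\mathcal{W}$-events.
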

\begin{proof}
The proof of (\ref{e36}) is essentially the same as that of (29) in
\cite{Yao18}.  For completeness we give it here.  Assume that $j\geq
1$. Applying Lemma \ref{l10} and (\ref{e28}), there exist
$C_5,C_6>0$, such that for all $x\geq K+3$, where $K$ is from Lemma
\ref{l10},
\begin{align*}
&\mathbf{P}_p[|T(\mathcal {C}_{j-1},\mathcal
{C}_j)|\geq x]\\
&\leq \mathbf{P}_p[m(j)\geq j+\lfloor
x/K\rfloor-2]+\mathbf{P}_p[\rho(2^{j-1},2^{j+\lfloor
x/K\rfloor-1})\geq x]\leq C_5\exp(-C_6x).
\end{align*}
Similarly we have
\begin{equation*}
\mathbf{P}_p'[T(\mathcal {C}_j(\omega),\mathcal
{C}_{l(j,\omega,\omega')}(\omega'))(\omega')\geq x]\leq
C_7\exp(-C_8x),
\end{equation*}
which implies
\begin{equation*}
\mathbf{E}_p'[T^2(\mathcal {C}_j(\omega),\mathcal
{C}_{l(j,\omega,\omega')}(\omega'))(\omega')]\leq C_9.
\end{equation*}
Using the same method we obtain
\begin{equation*}
\mathbf{E}_p'[T^2(\mathcal {C}_{j-1}(\omega),\mathcal
{C}_{l(j,\omega,\omega')}(\omega'))(\omega')]\leq C_{10}.
\end{equation*}
Equation (\ref{e35}) together with above inequalities implies
(\ref{e36}) for $j\geq 1$ immediately.  The proof for the case that
$j=0$ is essentially the same.

To prove (\ref{e31}), we will use (\ref{e35}) to get large deviation
estimates of $|\Delta_{J(p)+j}|$.  For all $j\geq 0,k\geq 1$ and
$n>2^{J(p)+j-1}$, define the events
\begin{align*}
&\mathcal {Q}_n(j,k):=\{\mbox{$\exists$ $k$ disjoint closed
circuits surrounding 0 in $A(2^{J(p)+j-1},n)$}\},\\
&\mathcal {Q}(j,k):=\{\mbox{$\exists$ $k$ disjoint closed circuits
surrounding 0 and lying outside $B(2^{J(p)+j-1})$}\}.
\end{align*}
By BK inequality (with the condition that the events depend on
finitely many sites) and (\ref{e11}),
\begin{equation*}
\mathbf{P}_p[\mathcal {Q}_n(j,k)]\leq \{\mathbf{P}_p[\mathcal
{Q}_n(j,1)]\}^k\leq\{\mathbf{P}_p[\mathcal
{A}_1^c(2^{J(p)+j-1},\infty)]\}^k\leq C_{11}^k\exp(-C_{12}2^jk).
\end{equation*}
Therefore, there is an absolute constant $j_0>1$, such that for all
$j\geq j_0$,
\begin{equation*}
\mathbf{P}_p[\mathcal
{Q}(j,k)]=\lim_{n\rightarrow\infty}\mathbf{P}_p[\mathcal
{Q}_n(j,k)]\leq \exp(-C_{13}2^jk).
\end{equation*}
This implies that for all $j\geq j_0$,
\begin{align}
&\mathbf{P}_p[T(\mathcal {C}_{J(p)+j-1},\mathcal {C}_{J(p)+j})\geq
k]\leq \mathbf{P}_p[\mathcal {Q}(j,k)]\leq
\exp(-C_{13}2^jk),\label{e37}\\
&\mathbf{P}_p'[T(\mathcal {C}_{J(p)+j-1}(\omega),\mathcal
{C}_{l(J(p)+j,\omega,\omega')}(\omega'))(\omega')\geq
k]\leq\mathbf{P}_p[\mathcal {Q}(j,k)]\leq
\exp(-C_{13}2^jk).\nonumber
\end{align}
From the above inequality we know that there exists a constant
$j_1>j_0$, such that for all $j\geq j_1$,
\begin{equation}\label{e38}
\mathbf{E}_p'[T(\mathcal {C}_{J(p)+j-1}(\omega),\mathcal
{C}_{l(J(p)+j,\omega,\omega')}(\omega'))(\omega')]\leq
\exp(-C_{14}2^j),
\end{equation}
which obviously implies
\begin{equation}\label{e39}
\mathbf{E}_p'[T(\mathcal {C}_{J(p)+j}(\omega),\mathcal
{C}_{l(J(p)+j,\omega,\omega')}(\omega'))(\omega')]\leq
\exp(-C_{14}2^j).
\end{equation}
Equation (\ref{e35}) together with (\ref{e37}), (\ref{e38}) and
(\ref{e39}) implies that there is an absolute constant $j_2>j_1$,
such that for all $p>1/2$, $j\geq j_2$ and $x\geq 0$,
\begin{equation*}
\mathbf{P}_p[|\Delta_{J(p)+j}|\geq \exp(-C_{14}2^j)+x]\leq
\exp(-C_{13}2^jx),
\end{equation*}
which yields $\mathbf{E}_p[\Delta_{J(p)+j}^2]\leq
C_{15}\exp(-C_{16}2^j)$ for $j\geq j_2$.  This and (\ref{e36}) give
(\ref{e31}) immediately.

Combining (\ref{e15}), (\ref{e27}) and (\ref{e31}), we obtain
(\ref{e32}) as follows.
\begin{equation*}
|\Var_p[T(0,\mathcal {C}_{\infty})]-\Var_p[T(0,\mathcal
{C}_{J(p)})]|=\sum_{j=1}^{\infty}\mathbf{E}_p[\Delta_{J(p)+j}^2]\leq
\sum_{j=1}^{\infty}C_2\exp(-C_32^j)<C_4.
\end{equation*}
\end{proof}
\subsubsection{Bound on $|\Var_p[T(0,\mathcal
{C}_{J(p)})]-\Var_p[T(p)]|$}\label{term3} The following lemma gives
the upper bound of the third term on the right-hand side of
(\ref{e75}).
\begin{lemma}\label{l16}
There exists a universal constant $C>0$, such that for all $p>1/2$,
\begin{equation*}
|\Var_p[T(0,\mathcal {C}_{J(p)})]-\Var_p[T(p)]|<C.
\end{equation*}
\end{lemma}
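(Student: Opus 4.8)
The plan is to compare the two martingale decompositions term by term. Recall that $T(p) = T(0,\partial B(L(p)))$ and $\mathcal{C}_{J(p)}$ is the innermost open circuit surrounding $0$ found in the first annulus $A(k)$, $k \geq J(p)$, containing one; since $J(p) = \lceil \log_2(L(p))\rceil + 1$, the circuit $\mathcal{C}_{J(p)}$ lies outside $B(L(p))$ but, by the exponential decay estimate (\ref{e28}) (equivalently the exponential decay of the correlation length (\ref{e40})), it typically lies within distance $O(L(p))$ of $\partial B(L(p))$. For both random variables I will write down the orthogonal martingale-increment decomposition with respect to the filtration $\{\mathscr{F}_j\}$: we already have $\Var_p[T(0,\mathcal{C}_{J(p)})] = \sum_{j=0}^{J(p)} \mathbf{E}_p[\Delta_j^2]$ from (\ref{e15}), and analogously $\Var_p[T(p)] = \sum_{j=0}^{\infty} \mathbf{E}_p[\widetilde{\Delta}_j^2]$ where $\widetilde{\Delta}_j := \mathbf{E}_p[T(p)\mid\mathscr{F}_j] - \mathbf{E}_p[T(p)\mid\mathscr{F}_{j-1}]$ (finiteness of the second moment of $T(p)$ is immediate since $T(p) \leq S(1, L(p))$, which has exponential tails by Lemma \ref{l10}).

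The key point is that $\Delta_j$ and $\widetilde{\Delta}_j$ coincide for all $j$ that are not too close to $J(p)$. Indeed, conditionally on $\mathscr{F}_j$, both $T(0,\mathcal{C}_{J(p)})$ and $T(p)$ decompose as $T(0,\mathcal{C}_j)$ plus a remaining passage time that only depends on the configuration outside $\overline{\mathcal{C}}_j$; and since $\mathcal{C}_j$ is an open (zero-time) circuit, the passage time from $0$ is unaffected by replacing the target $\partial B(L(p))$ by the circuit $\mathcal{C}_{J(p)}$, or vice versa, as long as $\mathcal{C}_j$ already surrounds $B(L(p))$, i.e. as long as $m(j) \geq J(p)$ — more precisely as long as $2^{m(j)} \geq L(p)$. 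For $j \leq J(p) - C$ this is automatic... no, the opposite: it is for $j$ small that $\mathcal{C}_j$ need not reach outside $B(L(p))$; so in fact $\Delta_j = \widetilde{\Delta}_j$ for all $j$ with $2^j < L(p)$, i.e. all $j \leq J(p) - 2$, because then $\mathcal{C}_j$ lies inside $B(L(p))$ and the "outer'' contribution $\mathbf{E}_p'[\,\cdots]$ to $\Delta_j$ via (\ref{e35}) is the same whether one aims at $\partial B(L(p))$ or at $\mathcal{C}_{J(p)}$ — one must simply check, using the representation (\ref{e35}) together with the fact that $\mathcal{C}_{J(p)}$ and $\partial B(L(p))$ are both reached by a geodesic that passes through the open circuit $\mathcal{C}_j$, that the difference $\Delta_j - \widetilde{\Delta}_j$ vanishes. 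So
\begin{equation*}
|\Var_p[T(0,\mathcal{C}_{J(p)})] - \Var_p[T(p)]| \leq \sum_{j \geq J(p)-1} \left( \mathbf{E}_p[\Delta_j^2] + \mathbf{E}_p[\widetilde{\Delta}_j^2] \right) + \text{(finitely many exceptional small-}j\text{ terms)}.
\end{equation*}

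For the tail $j \geq J(p) - 1$: the bound $\mathbf{E}_p[\Delta_j^2] \leq C$ for all $j$ and $\mathbf{E}_p[\Delta_{J(p)+j}^2] \leq C\exp(-C'2^j)$ come directly from Lemma \ref{l15}. For the $\widetilde{\Delta}_j$ one argues in the same spirit: for $j = J(p) + k$ with $k \geq 1$, the increment $\widetilde{\Delta}_j$ is nonzero only on the event that no open circuit surrounding $0$ with $0$-time appears in $A(J(p)), \dots, A(J(p)+k-1)$ — equivalently that there are $\geq k$ disjoint closed circuits outside $B(2^{J(p)-1})$ — which by (\ref{e11}) and the BK inequality has probability $\leq C^k \exp(-C' 2^k)$, exactly as in the proof of (\ref{e31}); combined with the exponential tail of $|\widetilde{\Delta}_j|$ (from Lemma \ref{l10}, since $|\widetilde{\Delta}_j|$ is controlled by $S$ of an annulus) one gets $\mathbf{E}_p[\widetilde{\Delta}_j^2] \leq C\exp(-C'2^k)$. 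Hence the tail sum over $k \geq 1$ is $O(1)$, and adding the at most two boundary terms $j = J(p)-1, J(p)$ (each $O(1)$ by Lemma \ref{l15} and its analog) together with the finitely many small-$j$ corrections gives the claimed uniform bound. The main obstacle is the careful bookkeeping showing $\Delta_j = \widetilde{\Delta}_j$ for $2^j < L(p)$: one has to track through the representation (\ref{e35}) that the "resampled'' outer passage times $\mathbf{E}_p'[T(\mathcal{C}_j,\mathcal{C}_{l(j,\omega,\omega')})]$ appearing for the two target sets agree, which requires noting that $l(j,\omega,\omega')$ — defined via the first open circuit outside $\mathcal{C}_j$ in the $\omega'$-copy — automatically surrounds both $\partial B(L(p))$ and $\mathcal{C}_{J(p)}$ once it exists, so the same coupling computes both increments; the $j$ near $J(p)$ require instead the explicit exponential estimates above.
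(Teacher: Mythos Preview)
Your overall strategy---decompose both variances as sums of squared martingale increments and compare term by term---is exactly the paper's, but the central claim on which your argument rests is false. You assert that $\Delta_j=\widetilde{\Delta}_j$ for all $j\le J(p)-2$, and you justify this by saying that the resampled circuit $\mathcal{C}_{l(j,\omega,\omega')}(\omega')$ ``automatically surrounds both $\partial B(L(p))$ and $\mathcal{C}_{J(p)}$ once it exists.'' It does not: by definition $l(j,\omega,\omega')=m(m(j,\omega)+1,\omega')$ is merely the first dyadic scale beyond $m(j,\omega)$ at which $\omega'$ has an open circuit, so $\mathcal{C}_{l}(\omega')$ typically lies \emph{just outside} $\mathcal{C}_j(\omega)$ and well inside $B(L(p))$. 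Working through the representation (\ref{e35}), on the $\omega'$-event $\{l(j,\omega,\omega')\ge J(p)-2\}$ the two ``outer'' expectations $\mathbf{E}_p'[T(\mathcal{C}_j,\partial B(L(p)))]$ and $\mathbf{E}_p'[T(\mathcal{C}_j,\mathcal{C}_{l})]$ do not decompose additively through $\partial B(L(p))$ (which is not an open circuit), so the cancellation you need fails; likewise on the $\omega$-event $\{m(j)\ge J(p)-2\}$. Both events have probability $\le\exp(-C(J(p)-j))$, but that only makes $\Delta_j-\widetilde{\Delta}_j$ \emph{small}, not zero, and you are summing $J(p)$ such terms.

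What the paper actually does is accept that the increments differ and prove the quantitative bound
\[
\mathbf{E}_p\bigl|\widetilde{\Delta}_j-\Delta_j\bigr|^2\le C\exp\bigl(-C'(J(p)-j)\bigr),\qquad 0\le j\le J(p)-3,
\]
by carefully splitting according to whether $m(j,\omega)$ and $l(j,\omega,\omega')$ land below or above $J(p)-2$ (inequalities (\ref{e47})--(\ref{e49})). To make this tractable the paper does \emph{not} use your filtration $\{\mathscr{F}_j\}$ for $T(p)$ but a truncated one $\{\widetilde{\mathscr{F}}_j\}$: as soon as $m(j)>J(p)-3$ the filtration jumps to full knowledge of $B(L(p))$, forcing the decomposition of $T(p)$ to terminate after $J(p)-1$ terms and yielding a clean analogue (\ref{e42}) of (\ref{e35}). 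Cauchy--Schwarz with the uniform bounds (\ref{e36}), (\ref{e48}) then turns the $L^2$ bound on $\widetilde{\Delta}_j-\Delta_j$ into $|\mathbf{E}_p[\widetilde{\Delta}_j^2]-\mathbf{E}_p[\Delta_j^2]|\le C\exp(-C'(J(p)-j))$, and the geometric sum is $O(1)$. This exponentially-decaying term-by-term comparison is the missing ingredient in your argument.
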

We have already known that $\Var_p[T(0,\mathcal
{C}_{J(p)})]=\sum_{j=0}^{J(p)}\mathbf{E}_p[\Delta_j^2]$ from
(\ref{e15}).  To prove the lemma, we will write
$T(p)-\mathbf{E}_p[T(p)]$ as a sum of martingale differences
$\widetilde{\Delta}_j$'s, and then bound
$|\mathbf{E}_p[\widetilde{\Delta}_j^2]-\mathbf{E}_p[\Delta_j^2]|$
appropriately.  Let us start with some notation.  For $j\in
\mathbb{N}\cup\{0\}$, let
\begin{align*}
&\widetilde{\mathcal {C}}_j:=\left\{\begin{array}{ll}
\mathcal {C}_j &\mbox{if $m(j)\leq J(p)-3$},\\
\partial B(L(p)) &\mbox{otherwise}.\end{array}\right.\\
&\widetilde{\mathscr{F}}_j:=\left\{\begin{array}{ll}
\sigma\mbox{-field generated by }\{t(v):v\in
\overline{\mathcal {C}}_j\} &\mbox{if $m(j)\leq J(p)-3$},\\
\sigma\mbox{-field generated by }\{t(v):v\in B(L(p))\}
&\mbox{otherwise}.\end{array}\right.
\end{align*}
Denote by $\widetilde{\mathscr{F}}_{-1}$ the trivial $\sigma$-field
and by $\widetilde{\mathcal {C}}_{-1}$ the origin.  For $p>1/2$,
write
\begin{align*}
T(p)-\mathbf{E}_p[T(p)]=\sum_{j=0}^{J(p)-2}\left(\mathbf{E}_p[T(p)|\widetilde{\mathscr{F}}_j]-\mathbf{E}_p[T(p)|\widetilde{\mathscr{F}}_{j-1}]\right):=\sum_{j=0}^{J(p)-2}\widetilde{\Delta}_j.
\end{align*}
Then $\{\widetilde{\Delta}_j\}_{0\leq j\leq J(p)-2}$ is an
$\widetilde{\mathscr{F}}_j$-martingale increment sequence. So,
\begin{equation}\label{e41}
\Var_p[T(p)]=\sum_{j=0}^{J(p)-2}\mathbf{E}_p[\widetilde{\Delta}_j^2].
\end{equation}
We claim that for all $j\geq 0$,
\begin{align}
\widetilde{\Delta}_j(\omega)=&T(\widetilde{\mathcal
{C}}_{j-1}(\omega),\widetilde{\mathcal
{C}}_j(\omega))(\omega)+\mathbf{E}_p'[T(\widetilde{\mathcal
{C}}_j(\omega),\widetilde{\mathcal
{C}}_{l(j,\omega,\omega')}(\omega'))(\omega')]\nonumber\\
&-\mathbf{E}_p'[T(\widetilde{\mathcal
{C}}_{j-1}(\omega),\widetilde{\mathcal
{C}}_{l(j,\omega,\omega')}(\omega'))(\omega')].\label{e42}
\end{align}
The proof is essentially the same as that of (\ref{e35}), and is
included in the Appendix.  Similarly to the proof of (\ref{e36}),
one can show that there is an absolute constant $C>0$ such that
\begin{equation}\label{e48}
\mathbf{E}_p[\widetilde{\Delta}_j^2]\leq C\quad\mbox{for all $p>1/2$
and all }j\geq 0.
\end{equation}

\begin{proof}[Proof of Lemma \ref{l16}]
Let $K$ be the constant from Lemma \ref{l10}.  By Lemma \ref{l10}
and (\ref{e28}), there exist $C_1,C_2>0$, such that for all $j,k\geq
1$,
\begin{align}
&\mathbf{P}_p[\rho(2^{j-1},2^{m(j)+1})\geq k]\nonumber\\
&\leq \mathbf{P}_p[m(j)\geq j-1+\lfloor
k/K\rfloor]+\mathbf{P}_p[\rho(2^{j-1},2^{j-1+\lfloor
k/K\rfloor})\geq k]\leq C_1\exp(-C_2k).\label{e81}
\end{align}
Suppose $0\leq j\leq J(p)-2$.  It is easy to see that if $m(j)\leq
J(p)-3$, then $T(\widetilde{\mathcal {C}}_{j-1},\widetilde{\mathcal
{C}}_j)=T(\mathcal {C}_{j-1},\mathcal {C}_j)$; if $m(j)\geq J(p)-2$,
then $|T(\widetilde{\mathcal {C}}_{j-1},\widetilde{\mathcal
{C}}_j)-T(\mathcal {C}_{j-1},\mathcal {C}_j)|\leq
\rho(2^{J(p)-2},2^{m(J(p)-2)+1})$.  Therefore, using (\ref{e28}),
(\ref{e81}) and independence, we obtain that there exist
$C_3,C_4>0$, such that for all $k\geq 1$,
\begin{align}
\mathbf{P}_p[|T(\widetilde{\mathcal {C}}_{j-1},\widetilde{\mathcal
{C}}_j)-T(\mathcal {C}_{j-1},\mathcal {C}_j)|\geq
k]&\leq\mathbf{P}_p[m(j)\geq
J(p)-2]\mathbf{P}_p[\rho(2^{J(p)-2},2^{m(J(p)-2)+1})\geq
k]\nonumber\\
&\leq C_3\exp(-C_4(J(p)-j+k)).\label{e47}
\end{align}
So for $j\leq J(p)-2$,
\begin{equation}\label{e82}
\mathbf{E}_p\left|T(\widetilde{\mathcal
{C}}_{j-1},\widetilde{\mathcal {C}}_j)-T(\mathcal {C}_{j-1},\mathcal
{C}_j)\right|^2\leq C_5\exp(-C_6(J(p)-j)).
\end{equation}
Similarly to (\ref{e47}), there exist $C_7,C_8>0$, such that for any
fixed $\mathcal {C}_j(\omega)$ with $m(j,\omega)\leq J(p)-3$ and all
$k\geq 1$,
\begin{align*}
&\mathbf{P}_p'[|T(\widetilde{\mathcal
{C}}_j(\omega),\widetilde{\mathcal
{C}}_{l(j,\omega,\omega')}(\omega'))(\omega')-T(\mathcal
{C}_j(\omega),\mathcal
{C}_{l(j,\omega,\omega')}(\omega'))(\omega')|\geq k]\\
&\leq\mathbf{P}_p'[m(m(j,\omega)+1,\omega')\geq
J(p)-2]\mathbf{P}_p'[\rho(2^{J(p)-2},2^{m(J(p)-2)+1})\geq
k]\\
&\leq C_7\exp(-C_8(J(p)-m(j,\omega)+k)),
\end{align*}
which gives
\begin{equation}\label{e43}
\left|\mathbf{E}_p'[T(\widetilde{\mathcal
{C}}_j(\omega),\widetilde{\mathcal
{C}}_{l(j,\omega,\omega')}(\omega'))(\omega')]-\mathbf{E}_p'[T(\mathcal
{C}_j(\omega),\mathcal
{C}_{l(j,\omega,\omega')}(\omega'))(\omega')]\right|\leq
C_9\exp(-C_8(J(p)-m(j,\omega))).
\end{equation}
Note that if $m(j,\omega)\geq J(p)-2$, then $\widetilde{\mathcal
{C}}_j(\omega)=\partial B(L(p))$ and $T(\widetilde{\mathcal
{C}}_j(\omega),\widetilde{\mathcal
{C}}_{l(j,\omega,\omega')}(\omega'))(\omega')=0$.  Therefore, for
any fixed $\mathcal {C}_j(\omega)$ with $m(j,\omega)\geq J(p)-2$ and
all $k\geq 1$,
\begin{align*}
&\mathbf{P}_p'[|T(\widetilde{\mathcal
{C}}_j(\omega),\widetilde{\mathcal
{C}}_{l(j,\omega,\omega')}(\omega'))(\omega')-T(\mathcal
{C}_j(\omega),\mathcal
{C}_{l(j,\omega,\omega')}(\omega'))(\omega')|\geq k]\\
&=\mathbf{P}_p'[T(\mathcal {C}_j(\omega),\mathcal
{C}_{l(j,\omega,\omega')}(\omega'))(\omega')\geq k]\\
&\leq \mathbf{P}_p'[\rho(2^{m(j,\omega)},2^{m(m(j,\omega)+1)+1})(\omega')\geq k]\\
&\leq C_1\exp(-C_2k)\quad\mbox{by (\ref{e81})},
\end{align*}
which gives
\begin{equation}\label{e44}
\left|\mathbf{E}_p'[T(\widetilde{\mathcal
{C}}_j(\omega),\widetilde{\mathcal
{C}}_{l(j,\omega,\omega')}(\omega'))(\omega')]-\mathbf{E}_p'[T(\mathcal
{C}_j(\omega),\mathcal
{C}_{l(j,\omega,\omega')}(\omega'))(\omega')]\right|\leq C_{10}.
\end{equation}
Combining (\ref{e43}) and (\ref{e44}) with (\ref{e28}), we obtain
that for $j\leq J(p)-3$,
\begin{align}
&\mathbf{E}_p\left|\mathbf{E}_p'[T(\widetilde{\mathcal
{C}}_j(\omega),\widetilde{\mathcal
{C}}_{l(j,\omega,\omega')}(\omega'))(\omega')]-\mathbf{E}_p'[T(\mathcal
{C}_j(\omega),\mathcal
{C}_{l(j,\omega,\omega')}(\omega'))(\omega')]\right|^2\nonumber\\
&\leq
\sum_{k=j}^{J(p)-3}\mathbf{P}_p[m(j)=k]C_9^2\exp(-2C_8(J(p)-k))+C_{10}^2\mathbf{P}_p[m(j)\geq J(p)-2]\nonumber\\
&\leq
\sum_{k=j}^{J(p)-3}\exp(-C_{11}(k-j))C_9^2\exp(-2C_8(J(p)-k))+C_{10}^2\exp(-C_{11}(J(p)-2-j))\nonumber\\
&\leq C_{12}\exp(-C_{13}(J(p)-j))\label{e45}.
\end{align}
Similarly, for $j\leq J(p)-3$,
\begin{equation}\label{e46}
\mathbf{E}_p\left|\mathbf{E}_p'[T(\widetilde{\mathcal
{C}}_{j-1}(\omega),\widetilde{\mathcal
{C}}_{l(j,\omega,\omega')}(\omega'))(\omega')]-\mathbf{E}_p'[T(\mathcal
{C}_{j-1}(\omega),\mathcal
{C}_{l(j,\omega,\omega')}(\omega'))(\omega')]\right|^2\leq
C_{14}\exp(-C_{15}(J(p)-j)).
\end{equation}
Combining the equations (\ref{e35}), (\ref{e42}) together with
inequalities (\ref{e82}), (\ref{e45}) and (\ref{e46}), we obtain
that, there exist absolute constants $C_{16},C_{17}>0$, such that
for all $0\leq j\leq J(p)-3$,
\begin{equation}\label{e49}
\mathbf{E}_p|\widetilde{\Delta}_j-\Delta_j|^2\leq
C_{16}\exp(-C_{17}(J(p)-j)).
\end{equation}
Therefore, for all $0\leq j\leq J(p)-3$,
\begin{align}
&|\mathbf{E}_p[\widetilde{\Delta}_j^2]-\mathbf{E}_p[\Delta_j^2]|\nonumber\\
&\leq\sqrt{\mathbf{E}_p[(\widetilde{\Delta}_j+\Delta_j)^2]}\sqrt{\mathbf{E}_p[(\widetilde{\Delta}_j-\Delta_j)^2]}\quad\mbox{by Cauchy-Schwarz inequality}\nonumber\\
&\leq C_{18}\exp(-C_{19}(J(p)-j))\quad\mbox{by (\ref{e36}),
(\ref{e48}) and (\ref{e49})}\label{e50}.
\end{align}
Finally, we have
\begin{align*}
|\Var_p[T(0,\mathcal
{C}_{J(p)})]-\Var_p[T(p)]|&=\left|\sum_{j=0}^{J(p)}\mathbf{E}_p[\Delta_j^2]-\sum_{j=0}^{J(p)-2}\mathbf{E}_p[\widetilde{\Delta}_j^2]\right|\quad\mbox{by
(\ref{e15}) and (\ref{e41})}\nonumber\\
&\leq
C_{20}+\sum_{j=0}^{J(p)-3}\left|\mathbf{E}_p[\widetilde{\Delta}_j^2]-\mathbf{E}_p[\Delta_j^2]\right|\quad\mbox{by
(\ref{e36}) and (\ref{e48})}\nonumber\\
&\leq C\quad\mbox{by (\ref{e50})},
\end{align*}
which concludes the proof.
\end{proof}
By Lemma \ref{l11}, (\ref{e32}) and Lemma \ref{l16}, the three terms
on the right-hand side of (\ref{e75}) are bounded, and we obtain
(\ref{t35}).
\subsection{Proof of Corollary \ref{c1}}\label{cor}
\begin{proof}[Proof of Corollary \ref{c1}]
First we prove (\ref{t32}).  Let $C$ be the constant in (\ref{t31}).
For $k\in\mathbb{N}$, let $p_k$ be the solution of the equation
\begin{equation*}
-\log (p_k-1/2)=k^4C^2.
\end{equation*}
By (\ref{t41}) and (\ref{e14}), we have
\begin{equation}\label{e84}
\frac{T_{1/2}(0,\partial
B(L(p_k)))}{-\frac{4}{3}\log(p_k-1/2)}\rightarrow
\frac{1}{2\sqrt{3}\pi}~~\mbox{ a.s. as }k\rightarrow\infty.
\end{equation}
Define the event
\begin{equation*}
\mathcal {F}_k:=\left\{|T_{p_k}(0,\mathcal
{C}_{\infty}(p_k))-T_{1/2}(0,\partial B(L(p_k)))|\geq k^2C\right\}.
\end{equation*}
Then Markov's inequality and (\ref{t31}) give
\begin{equation*}
\mathbf{P}[\mathcal {F}_k]\leq 1/k^2.
\end{equation*}
Since $\sum_{k=1}^{\infty}\mathbf{P}[\mathcal {F}_k]<\infty$, the
Borel-Cantelli lemma implies that almost surely only finitely many
$\mathcal {F}_k$'s occur.  Then, by (\ref{e84}) and the definition
of $p_k$ we obtain
\begin{equation}\label{e16}
\frac{T_{p_k}(0,\mathcal
{C}_{\infty}(p_k))}{-\frac{4}{3}\log(p_k-1/2)}\rightarrow
\frac{1}{2\sqrt{3}\pi}~~\mbox{ a.s. as }k\rightarrow\infty.
\end{equation}
Observe that for $p_{k+1}\leq p\leq p_k$,
\begin{equation*}
\frac{T_{p_k}(0,\mathcal {C}_{\infty}(p_k))}{-\log(p_{k+1}-1/2)}\leq
\frac{T_p(0,\mathcal {C}_{\infty}(p))}{-\log(p-1/2)}\leq
\frac{T_{p_{k+1}}(0,\mathcal {C}_{\infty}(p_{k+1}))}{-\log(p_k-1/2)}
\end{equation*}
and
\begin{equation*}
\frac{\log(p_k-1/2)}{\log(p_{k+1}-1/2)}=\frac{k^4}{(k+1)^4}\rightarrow
1~~\mbox{ as }k\rightarrow\infty.
\end{equation*}
Then we derive (\ref{t32}) from (\ref{e16}) easily.

Combining (\ref{t42}), (\ref{t31}) and (\ref{e14}) gives
(\ref{t33}).

Combining (\ref{t43}), (\ref{t35}) and (\ref{e14}) gives
(\ref{t34}).

By (\ref{t46}) and (\ref{e14}), there exists a function $\eta(p)$
with $\eta(p)\rightarrow 0$ as $p\searrow 1/2$, such that
\begin{equation}\label{e83}
\frac{T_{1/2}(0,\partial
B(L(p)))+(1+\eta(p))\frac{2}{3\sqrt{3}\pi}\log(p-1/2)}{\sqrt{\left(\frac{1}{2\pi^2}-\frac{2}{3\sqrt{3}\pi}\right)\frac{4}{3}\log
(p-1/2)}}\stackrel{d}\longrightarrow N(0,1)\quad\mbox{as $p\searrow
1/2$}.
\end{equation}
Using Markov's inequality and (\ref{t31}), we get that for all
$p>1/2$,
\begin{equation*}
\mathbf{P}[|T_p(0,\mathcal {C}_{\infty}(p))-T_{1/2}(0,\partial
B(L(p)))|\geq C(-\log(p-1/2))^{1/3}]\leq
\frac{1}{(-\log(p-1/2))^{1/3}}.
\end{equation*}
This inequality together with (\ref{e83}) gives (\ref{t36}).
\end{proof}

\section{Subsequential limits for critical FPP}\label{subseq}
In this section, we give the proof of Theorem \ref{t1}.  As
mentioned earlier, we will use large deviation estimates for CLE$_6$
loops derived in \cite{MWW16}.  Recall that $\mathbb{P}$ is the
probability measure of CLE$_6$ in $\overline{\mathbb{D}}$.  In the
following, we write $\mathbf{P}$ for $\mathbf{P}_{1/2}$.
\subsection{CLE$_6$ nesting estimates}
Let us define some notation before stating the result of
\cite{MWW16}.

If $D$ is a simply connected planar domain with $0\in D$, the
\textbf{conformal radius} of $D$ viewed from 0 is defined to be
$\CR(D):=|g'(0)|^{-1}$, where $g$ is any conformal map from $D$ to
$\mathbb{D}$ that sends 0 to 0.

For $k\in\mathbb{N}$, let $\mathcal {L}_k$ be the $k$th largest
CLE$_6$ loop that surrounds 0 in $\overline{\mathbb{D}}$, and let
$U_k$ be the connected component of the open set
$\mathbb{D}\backslash\mathcal {L}_k$ that contains 0.  Write
$U_0:=\mathbb{D}$.  For $k\in \mathbb{N}$, define
$$Z_k:=\log\CR(U_{k-1})-\log\CR(U_k).$$
Proposition 1 in \cite{SSW09} says that $\{Z_k\}_{k\in\mathbb{N}}$
are i.i.d. random variables.  Furthermore, the log moment generating
function of $Z_1$ was computed in \cite{SSW09} and is given by
\begin{equation*}
\Lambda(\lambda):=\log \mathbb{E}[\exp(\lambda
B_k)]=\log\left(\frac{1}{2\cos(\pi\sqrt{1/9+4\lambda/3})}\right),\quad\mbox{for
$-\infty<\lambda<5/48$.}
\end{equation*}
Define the Fenchel-Legendre transform $\Lambda^{\star}:
\mathbb{R}\rightarrow [0,\infty]$ of $\Lambda$ by
\begin{equation*}
\Lambda^{\star}(x):=\sup_{\lambda\in \mathbb{R}}\{\lambda
x-\Lambda(\lambda)\}.
\end{equation*}
Write
\begin{equation*}
\gamma(\nu):=\left\{
\begin{aligned}
&\nu\Lambda^{\star}(1/\nu),&\mbox{ if }\nu>0,\\
&5/48,&\mbox{if }\nu=0.
\end{aligned}
\right.
\end{equation*}
We denote by $\nu_1$ the unique value of $\nu\geq 0$ such that
$\gamma(\nu)=1$.

The following lemma for the nesting of CLE$_6$ loops was proved in
\cite{MWW16}.
\begin{lemma}[Lemma 4.3 in \cite{MWW16}]\label{l1}
Let $\Gamma$ be a CLE$_6$ in $\overline{\mathbb{D}}$, and fix $
\nu\geq 0$.  Then for all fixed sufficiently large constant $M>1$,
and for all functions $\varepsilon\mapsto \delta(\varepsilon)$
decreasing to 0 sufficiently slowly as $\varepsilon\rightarrow 0$,
the event that:
\begin{itemize}
\item[(i)] there is a loop which is contained in the annulus
$\overline{\mathbb{D}(\varepsilon)}\backslash
\mathbb{D}(\varepsilon/M)$ and which surrounds 0, and
\item[(ii)] the index $J$ of the outermost such loop in the annulus
$\overline{\mathbb{D}(\varepsilon)}\backslash
\mathbb{D}(\varepsilon/M)$ satisfies $\nu\log(1/\varepsilon)\leq
J\leq (\nu+\delta(\varepsilon))\log(1/\varepsilon)$,
\end{itemize}
has probability at least $\varepsilon^{\gamma(\nu)+o(1)}$ as
$\varepsilon\rightarrow 0$.
\end{lemma}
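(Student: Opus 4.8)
The plan is to construct, for each fixed $\nu\ge 0$, a ``good event'' $\mathcal G_\varepsilon$ that is contained in the event of the lemma and satisfies $\mathbb P[\mathcal G_\varepsilon]\ge \varepsilon^{\gamma(\nu)+o(1)}$. Write $S_n:=Z_1+\cdots+Z_n$, so that $\log\CR(U_n)=-S_n$ (recall $\CR(U_0)=1$). I will use repeatedly that the $Z_k$ are i.i.d.\ with log moment generating function $\Lambda$, and that, conditionally on the $\sigma$-field generated by $\mathcal L_1,\dots,\mathcal L_{n-1}$ and $U_{n-1}$, the loops inside $U_{n-1}$ form a CLE$_6$ in $U_{n-1}$; in particular $\mathcal L_n$ is the image, under the uniformizing map $U_{n-1}\to\mathbb D$ fixing $0$, of the outermost CLE$_6$ loop around $0$ in $\mathbb D$.

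Assume first $\nu>0$. Fix a large constant $K$ and set $n_0=n_0(\varepsilon):=\lceil\nu\log(1/\varepsilon)\rceil$. Let $\mathcal G_\varepsilon$ be the intersection of: (a) $S_{n_0-1}\in\log(1/\varepsilon)+J_1$, for a suitable fixed bounded interval $J_1$; (b) $Z_{n_0}\in J_2$, where $J_2\subset(\log(2K),\infty)$ is a fixed bounded interval chosen so that (a) and (b) together force $S_{n_0}\in\log(1/\varepsilon)+[\log 2,\log(2K)]$, i.e.\ $\CR(U_{n_0})\in[\varepsilon/(2K),\varepsilon/2]$; and (c) the image of $\mathcal L_{n_0+1}$ under the uniformizing map $U_{n_0}\to\mathbb D$ lies in $\overline{\mathbb D(1/2)}\setminus\mathbb D(1/4)$ and surrounds $0$. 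On $\mathcal G_\varepsilon$ we have $S_{n_0-1}=S_{n_0}-Z_{n_0}<\log(1/\varepsilon)$ (since $Z_{n_0}>\log(2K)\ge S_{n_0}-\log(1/\varepsilon)$), hence $\CR(U_k)>\varepsilon$ for every $k\le n_0-1$. I would then check that $\mathcal G_\varepsilon$ is contained in the event of the lemma: the Koebe distortion theorem applied to the inverse uniformizing map of $U_{n_0}$ (derivative of modulus $\CR(U_{n_0})$ at $0$), together with (c) and the window from (b), gives $\mathcal L_{n_0+1}\subset\overline{\mathbb D(2\CR(U_{n_0}))}\subset\overline{\mathbb D(\varepsilon)}$ and $\mathcal L_{n_0+1}\cap\mathbb D(\tfrac{4}{25}\CR(U_{n_0}))=\emptyset$, so $\mathcal L_{n_0+1}\subset\overline{\mathbb D(\varepsilon)}\setminus\mathbb D(\varepsilon/M)$ and surrounds $0$ as soon as $M\ge 25K/2$, which gives item (i). For item (ii), with $J$ the index of the outermost such loop, this same loop gives $J\le n_0+1$; on the other hand no $\mathcal L_k$ with $k\le n_0-1$ lies in $\overline{\mathbb D(\varepsilon)}$, since that would force $U_k\subset\overline{\mathbb D(\varepsilon)}$ and hence $\CR(U_k)\le\varepsilon$, contrary to the above, so $J\ge n_0\ge\nu\log(1/\varepsilon)$; and $J\le n_0+1\le\nu\log(1/\varepsilon)+2\le(\nu+\delta(\varepsilon))\log(1/\varepsilon)$ for $\varepsilon$ small, since $\delta$ decreases to $0$ slowly enough that $\delta(\varepsilon)\log(1/\varepsilon)\to\infty$.

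To lower bound $\mathbb P[\mathcal G_\varepsilon]$ I would peel off the conditions using the renewal structure: conditionally on (a) and (b), the probability of (c) is at least a universal $c_0>0$, the (positive) probability that the outermost CLE$_6$ loop around $0$ in $\mathbb D$ lies in $\overline{\mathbb D(1/2)}\setminus\mathbb D(1/4)$ and surrounds $0$; and since $Z_{n_0}$ is independent of $S_{n_0-1}$, conditionally on (a) the probability of (b) is $\mathbb P[Z_1\in J_2]=:c_1>0$. Hence $\mathbb P[\mathcal G_\varepsilon]\ge c_0c_1\,\mathbb P[S_{n_0-1}\in\log(1/\varepsilon)+J_1]$. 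Because $n_0-1\sim\nu\log(1/\varepsilon)$, this last event corresponds to an empirical mean $S_{n_0-1}/(n_0-1)$ near $1/\nu$, and the Cram\'er-type lower bound for i.i.d.\ sums whose moment generating function is finite near $0$ --- exponential tilting at the $\lambda^\star$ with $\Lambda'(\lambda^\star)=1/\nu$, followed by a local central limit theorem under the tilted law (which holds since $Z_1$ is absolutely continuous) --- yields $\mathbb P[S_{n_0-1}\in\log(1/\varepsilon)+J_1]\ge\exp(-(n_0-1)\Lambda^\star(1/\nu)+o(\log(1/\varepsilon)))=\varepsilon^{\nu\Lambda^\star(1/\nu)+o(1)}=\varepsilon^{\gamma(\nu)+o(1)}$, and absorbing $c_0c_1$ into the $o(1)$ finishes the case $\nu>0$. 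For $\nu=0$ I would instead require $\CR(U_1)\in[\varepsilon/(2K),\varepsilon/2]$ together with the analogue of (c) for $\mathcal L_2$; the same Koebe estimate puts $\mathcal L_2$ in $\overline{\mathbb D(\varepsilon)}\setminus\mathbb D(\varepsilon/M)$ when $M\ge 25K/2$, so $J\le 2\le\delta(\varepsilon)\log(1/\varepsilon)$ for $\varepsilon$ small, and $\mathbb P[\mathcal G_\varepsilon]\ge c_0\bigl(\mathbb P[\CR(U_1)<\varepsilon/2]-\mathbb P[\CR(U_1)<\varepsilon/(2K)]\bigr)\asymp\varepsilon^{5/48}=\varepsilon^{\gamma(0)}$, using the sharp two-sided bound $\mathbb P[\CR(U_1)<r]\asymp r^{5/48}$ (which follows from the explicit law of $\CR(U_1)$ in \cite{SSW09}, equivalently from $\mathbf P_{1/2}[\mathcal A_1(1,n)]\asymp n^{-5/48}$) and taking $K$ large.

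The step I expect to be the main obstacle is the sharp Cram\'er lower bound in the $\nu>0$ case: I need $S_{n_0-1}$ pinned to a window of \emph{bounded} width around $\log(1/\varepsilon)$ --- not merely width $o(n_0)$ --- so that $\mathcal L_{n_0+1}$ sits at Euclidean scale of order exactly $\varepsilon$ and fits inside the fixed-shape annulus; this is what forces the exponential-tilting plus local-CLT argument, and the absolute continuity of $Z_1$, rather than the softer Cram\'er lower bound. In the $\nu=0$ case the corresponding sticking point is that one really needs the genuinely polynomial $1$-arm estimate, not just $\varepsilon^{\pm o(1)}$. The remaining ingredients --- the renewal and conformal-invariance structure of CLE$_6$ nesting from \cite{SSW09}, positivity of the ``round loop'' probability $c_0$, and the Koebe distortion bounds --- I would treat as standard.
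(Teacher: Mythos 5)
You are proving a statement that the paper itself does not prove: Lemma \ref{l1} is imported verbatim as Lemma 4.3 of \cite{MWW16}, so there is no in-paper argument to compare against. Judged on its own terms, your reconstruction is essentially correct and follows the same route as \cite{MWW16}: reduce to the i.i.d.\ increments $Z_k=\log\CR(U_{k-1})-\log\CR(U_k)$ via the renewal/conformal-invariance property of \cite{SSW09}, pin $S_{n_0}$ in a bounded window around $\log(1/\varepsilon)$ by a sharp (local) Cram\'er lower bound, and transfer the renormalized loop to the annulus $\overline{\mathbb{D}(\varepsilon)}\setminus\mathbb{D}(\varepsilon/M)$ by the Koebe growth theorem. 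Your handling of the index constraint is right: loops of index $\le n_0-1$ cannot lie in $\overline{\mathbb{D}(\varepsilon)}$ because $\CR(U_k)>\varepsilon$ for those $k$, so $J\in\{n_0,n_0+1\}$, and the requirement $\delta(\varepsilon)\log(1/\varepsilon)\to\infty$ is exactly what ``decreasing sufficiently slowly'' buys you. The tilting argument is legitimate since $\Lambda$ is steep at $5/48$ (so $\lambda^\star<5/48$ exists for every $\nu>0$ and $\Lambda$ is finite in a neighborhood of it), and the local CLT applies because $Z_1$ is absolutely continuous. One small correction for the $\nu=0$ case: the two-sided bound $\mathbb{P}[\CR(U_1)<r]\asymp r^{5/48}$ does follow from the explicit law in \cite{SSW09} (the moment generating function has a simple pole at $5/48$, giving a genuinely exponential tail for $Z_1$), but it is \emph{not} ``equivalently'' a consequence of $\mathbf{P}_{1/2}[\mathcal{A}_1(1,n)]\asymp n^{-5/48}$ --- for the discrete one-arm probability only the exponent $n^{-5/48+o(1)}$ is known, which would not suffice to make your difference of probabilities positive; so you must rely on the continuum statement, which fortunately is the one you actually need.
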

However, we cannot use Lemma \ref{l1} directly.  We need to modify
it slightly:
\begin{lemma}\label{l18}
Let $\Gamma$ be a CLE$_6$ in the unit disk $\overline{\mathbb{D}}$,
and fix $\nu\geq 0$.  Then for all fixed sufficiently large constant
$M>1$, and for all functions $\varepsilon\mapsto
\delta(\varepsilon)$ decreasing to 0 sufficiently slowly as
$\varepsilon\rightarrow 0$, the event $\mathcal
{E}(\varepsilon,\delta,\nu)$ that:
\begin{itemize}
\item[(i)] $\mathcal {L}_1\subset\mathbb{D}\backslash \mathbb{D}(1/2)$, and
\item[(ii)] there exists a loop $\mathcal
{L}_J\subset\overline{\mathbb{D}(\varepsilon)}\backslash
\mathbb{D}(\varepsilon/M)$, with the the index $J$ satisfying
$\nu\log(1/\varepsilon)\leq J\leq
(\nu+\delta(\varepsilon))\log(1/\varepsilon)$,
\end{itemize}
has probability at least $\varepsilon^{\gamma(\nu)+o(1)}$ as
$\varepsilon\rightarrow 0$.
\end{lemma}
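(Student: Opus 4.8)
The plan is to deduce Lemma \ref{l18} from Lemma \ref{l1} by conditioning on the largest loop surrounding $0$ and using conformal invariance of CLE$_6$ together with the independence structure of the nesting sequence. The point is that condition (i) of Lemma \ref{l18} is an event of positive probability that is ``macroscopic'' (it only constrains $\mathcal{L}_1$), while condition (ii) is the same deep-nesting event as in Lemma \ref{l1} but now anchored at the first loop rather than at the boundary; the extra indices lost by starting from $\mathcal{L}_1$ instead of from $\partial\mathbb{D}$ are negligible on the scale $\log(1/\varepsilon)$, so the exponent $\gamma(\nu)$ is unchanged.

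First I would record the base event. Let $p_0:=\mathbb{P}[\mathcal{L}_1\subset\mathbb{D}\setminus\mathbb{D}(1/2)]$. By the RSW-type estimates underlying CLE$_6$ (or directly from the discrete picture via Theorem \ref{t5}), $p_0>0$. On this event, $U_1$ is a simply connected domain containing $0$ with $\CR(U_1)\in[c_1,c_2]$ for deterministic $0<c_1<c_2<1$ (since $\mathbb{D}(1/2)\subset U_1\subset\mathbb{D}$), so in particular $\log\CR(U_1)$ is bounded. Next I would invoke the conformal Markov property of CLE$_6$: conditionally on $\mathcal{L}_1$ and hence on $U_1$, the collection of loops contained in $U_1$ is a CLE$_6$ in $U_1$, which after applying the conformal map $\psi:U_1\to\mathbb{D}$ with $\psi(0)=0$ becomes a CLE$_6$ in $\mathbb{D}$. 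Under $\psi$, the loop $\mathcal{L}_{k+1}$ of the original CLE maps to the $k$th largest loop surrounding $0$ of the image CLE, and the quantity $\log\CR(U_1)-\log\CR(U_{k+1})$ equals the corresponding log-conformal-radius decrease computed in $\mathbb{D}$ (conformal radius transforms by the derivative of $\psi$, and the $\log|\psi'(0)|$ terms telescope away). The image of the annulus $\overline{\mathbb{D}(\varepsilon)}\setminus\mathbb{D}(\varepsilon/M)$ under $\psi^{-1}$ is a topological annulus around $0$ whose conformal modulus is exactly $\log M$; by the Koebe distortion theorem, since $\psi$ has bounded distortion near $0$ on the event (i), this image is sandwiched between $\overline{\mathbb{D}(a\varepsilon)}\setminus\mathbb{D}(b\varepsilon/M)$ and $\overline{\mathbb{D}(b\varepsilon)}\setminus\mathbb{D}(a\varepsilon/M)$ for fixed constants $0<a<1<b$. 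Enlarging $M$ to $M':=M\cdot(b/a)$ absorbs these constants, so a loop of the image CLE sitting in $\overline{\mathbb{D}(\varepsilon)}\setminus\mathbb{D}(\varepsilon/M)$ pulls back to a loop of the original CLE lying in $\overline{\mathbb{D}(b\varepsilon)}\setminus\mathbb{D}(a\varepsilon/M)$, still of the required annular type after adjusting $M$.

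Putting this together: apply Lemma \ref{l1} to the image CLE$_6$ in $\mathbb{D}$ with parameter $\nu$ and with $M$ replaced by a suitably enlarged constant. It gives, with probability $\varepsilon^{\gamma(\nu)+o(1)}$, a loop in $\overline{\mathbb{D}(\varepsilon)}\setminus\mathbb{D}(\varepsilon/M)$ whose index $J'$ in the image CLE satisfies $\nu\log(1/\varepsilon)\le J'\le(\nu+\delta(\varepsilon))\log(1/\varepsilon)$. Pulling back, this loop is $\mathcal{L}_{J'+1}$ of the original CLE, so $J=J'+1$, and $J'\le J\le J'+1$ changes the index bounds only by an additive constant, which is $o(\log(1/\varepsilon))$ and hence can be absorbed into the $\delta(\varepsilon)\log(1/\varepsilon)$ slack (choosing $\delta$ to decay slowly enough) on the upper side and is harmless on the lower side since $J\ge J'\ge\nu\log(1/\varepsilon)$. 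Since condition (ii) of Lemma \ref{l1} for the image CLE is (up to the above adjustments) exactly condition (ii) of Lemma \ref{l18} for the original CLE, and is measurable with respect to the loops inside $U_1$, it is conditionally independent of the event (i); therefore
\begin{equation*}
\mathbb{P}[\mathcal{E}(\varepsilon,\delta,\nu)]\ge p_0\cdot\varepsilon^{\gamma(\nu)+o(1)}=\varepsilon^{\gamma(\nu)+o(1)},
\end{equation*}
as claimed.

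The main obstacle is the bookkeeping around the conformal change of coordinates: verifying that the index shift is exactly $+1$ (one must check that $\psi$ maps $\mathcal{L}_{k+1}$ to the $k$th loop, with no off-by-one from whether $\mathcal{L}_1$ itself is counted), and that the log-conformal-radius increments defining $J$ are genuinely preserved so that the $\gamma(\nu)$ exponent — which comes from the large-deviation rate function $\Lambda^\star$ of these increments — is the same before and after. Once one is careful that the nesting statistic is a conformal invariant of the loop configuration relative to $0$, the rest is a routine Koebe-distortion argument to control how the target annulus deforms, together with the observation that all the constants introduced are absorbed by enlarging $M$ and slowing $\delta$.
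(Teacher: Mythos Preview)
Your proposal is correct and follows essentially the same route as the paper: condition on $\{\mathcal{L}_1\subset\mathbb{D}\setminus\mathbb{D}(1/2)\}$ (positive probability by RSW), push the remaining loops forward by the conformal map $U_1\to\mathbb{D}$, apply Lemma~\ref{l1} to the image CLE$_6$, and pull back using Koebe-type distortion bounds together with the index shift $J=\tilde{J}+1$. The only cosmetic difference is that the paper shows the cleaner containment $f(\overline{\mathbb{D}(\varepsilon)}\setminus\mathbb{D}(\varepsilon/M))\supset\overline{\mathbb{D}(\varepsilon)}\setminus\mathbb{D}(10\varepsilon/M)$ (via Lemma~\ref{l19} and the Schwarz lemma) so that the pulled-back loop lands directly in the target annulus, whereas you phrase the distortion from the $\psi^{-1}$ side and absorb the resulting constants into $M$; also, your remarks about log-conformal-radius increments are unnecessary here since $J$ is purely the combinatorial nesting index.
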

Note that condition (ii) of Lemma \ref{l18} is similar as the
conditions of Lemma \ref{l1}, except that it does not require
$\mathcal {L}_J$ be the outermost loop in
$\overline{\mathbb{D}(\varepsilon)}\backslash
\mathbb{D}(\varepsilon/M)$.  Before we prove Lemma \ref{l18}, let us
state a standard fact of complex analysis that will be used in the
proof.
\begin{lemma}[see e.g. Corollary 3.25 in \cite{Law05}]\label{l19}
Let $D,D'$ be two Jordan domains.  If $f: D\rightarrow D'$ is a
conformal transformation, then for all $w\in D$, $0<r<1$ and all
$|z-w|\leq r \dist(w,\partial D)$,
\begin{equation*}
|f(z)-f(w)|\leq \frac{4|z-w|}{(1-r)^2}\frac{\dist(f(w),\partial
D')}{\dist(w,\partial D)}.
\end{equation*}
\end{lemma}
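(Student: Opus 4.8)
The plan is to reduce Lemma \ref{l18} to Lemma \ref{l1}. Since Lemma \ref{l18} only asks for a lower bound on the probability, and its condition (ii) is \emph{weaker} than the analogous condition in Lemma \ref{l1} --- it merely requires the existence of \emph{some} $\mathcal{L}_J$ with index $J$ in the prescribed window that lies in the small annulus, not that $\mathcal{L}_J$ be the outermost loop there --- no estimate beyond Lemma \ref{l1} will be needed. What must be added is (a) the extra constraint (i), namely $\mathcal{L}_1\subset\mathbb{D}\setminus\mathbb{D}(1/2)$, which I would incorporate by conditioning on a positive-probability event, and (b) a change of base scale and base domain, which I would handle by a conformal map whose distortion near $0$ is controlled by Koebe's theorem (equivalently by Lemma \ref{l19}).

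First I would set $A_1:=\{\mathcal{L}_1\subset\mathbb{D}\setminus\mathbb{D}(1/2)\}$; this is exactly condition (i), and by standard RSW-type estimates for critical percolation together with Theorem \ref{t5} (or directly from known properties of CLE$_6$), $p_1:=\mathbb{P}[A_1]>0$. On $A_1$ the loop $\mathcal{L}_1$ avoids $\mathbb{D}(1/2)$, so $\mathbb{D}(1/2)\subseteq U_1\subseteq\mathbb{D}$, hence the Riemann map $\phi\colon\mathbb{D}\to U_1$ with $\phi(0)=0$, $\phi'(0)>0$ has $\phi'(0)=\CR(U_1)\in[1/2,1]$. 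By the Koebe growth and $1/4$ theorems --- equivalently by Lemma \ref{l19} applied to $\phi$ and to $\phi^{-1}$ at $w=0$, using $\dist(0,\partial\mathbb{D})=1$ and $\dist(0,\partial U_1)\geq1/2$ --- there is a universal constant $K_0>1$ such that $\mathbb{D}(s/K_0)\subseteq\phi(\mathbb{D}(s))\subseteq\mathbb{D}(K_0 s)$ for every $s\leq1/2$, uniformly over all admissible $U_1$. Moreover, conditionally on $\mathcal{L}_1$ (hence on $U_1$), the conformal/renewal Markov property of CLE$_6$ (Proposition 1 of \cite{SSW09}; see also \cite{She09,SW12}) gives that $\Gamma':=\phi^{-1}\bigl(\{\mathcal{L}_{k+1}\}_{k\geq1}\bigr)$ is a CLE$_6$ in $\mathbb{D}$; writing $\mathcal{L}'_k$ for its $k$th largest loop surrounding $0$, one has $\mathcal{L}_{k+1}=\phi(\mathcal{L}'_k)$, so the loop index shifts by one.

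Next, given $M$ large, I would choose a constant $M'$ simultaneously large enough to apply Lemma \ref{l1} and small enough that $M\geq K_0^2 M'$, set $\varepsilon':=\varepsilon/K_0$, and apply Lemma \ref{l1} to $\Gamma'$ with parameters $(\nu,M')$ and an auxiliary function $\delta'$: with probability at least $(\varepsilon')^{\gamma(\nu)+o(1)}=\varepsilon^{\gamma(\nu)+o(1)}$ (the constant power of $K_0$ being absorbed into the $o(1)$ in the exponent) there is a loop of $\Gamma'$ surrounding $0$ inside $\overline{\mathbb{D}(\varepsilon')}\setminus\mathbb{D}(\varepsilon'/M')$ whose index $J'$ satisfies $\nu\log(1/\varepsilon')\leq J'\leq(\nu+\delta'(\varepsilon'))\log(1/\varepsilon')$; call this event $E'$. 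On $A_1\cap E'$ the image loop $\mathcal{L}_{J'+1}=\phi(\mathcal{L}'_{J'})$ surrounds $0$, and the distortion bound $\mathbb{D}(s/K_0)\subseteq\phi(\mathbb{D}(s))\subseteq\mathbb{D}(K_0s)$ together with $\varepsilon'=\varepsilon/K_0$ and $M\geq K_0^2M'$ forces $\phi\bigl(\overline{\mathbb{D}(\varepsilon')}\setminus\mathbb{D}(\varepsilon'/M')\bigr)\subseteq\overline{\mathbb{D}(\varepsilon)}\setminus\mathbb{D}(\varepsilon/M)$, so $\mathcal{L}_{J'+1}$ lies in that annulus; and its index $J:=J'+1$ satisfies $\nu\log(1/\varepsilon)\leq J\leq(\nu+\delta(\varepsilon))\log(1/\varepsilon)$ provided $\delta$ decreases to $0$ slowly enough --- the left inequality is immediate from $\log(1/\varepsilon')\geq\log(1/\varepsilon)$, and the right one holds once $\delta(\varepsilon)\log(1/\varepsilon)\geq\delta'(\varepsilon/K_0)\log(1/\varepsilon)+C$ for all small $\varepsilon$, which can be arranged because the overshoot caused by the scale change and the $+1$ shift is only $O(1)$ while the window width $\delta(\varepsilon)\log(1/\varepsilon)$ diverges. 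Hence $\mathcal{E}(\varepsilon,\delta,\nu)\supseteq A_1\cap E'$, and since $E'$ has conditional probability at least $\varepsilon^{\gamma(\nu)+o(1)}$ given $(A_1,\mathcal{L}_1)$ by the renewal property, while $\mathbb{P}[A_1]=p_1>0$, we obtain $\mathbb{P}[\mathcal{E}(\varepsilon,\delta,\nu)]\geq p_1\,\varepsilon^{\gamma(\nu)+o(1)}=\varepsilon^{\gamma(\nu)+o(1)}$.

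The main obstacle, such as it is, is purely the bookkeeping in the third paragraph: one must verify that the uniform control of $\phi$ near $0$ (from Koebe, i.e.\ Lemma \ref{l19}) together with a suitable choice of $M'$ relative to $M$ really pushes the annulus $\overline{\mathbb{D}(\varepsilon')}\setminus\mathbb{D}(\varepsilon'/M')$ into $\overline{\mathbb{D}(\varepsilon)}\setminus\mathbb{D}(\varepsilon/M)$, and that the $O(1)$ discrepancies in the index (the $+1$ shift and $\varepsilon'\neq\varepsilon$) and the bounded multiplicative errors in probability (the factor $p_1$ and the power of $K_0$) are absorbed into, respectively, the diverging window $\delta(\varepsilon)\log(1/\varepsilon)$ and the $o(1)$ in the exponent. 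None of this is deep; everything quantitatively hard is already contained in Lemma \ref{l1}.
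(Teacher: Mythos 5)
Your proposal does not prove the statement it was asked to prove. The statement is Lemma \ref{l19}, the distortion estimate $|f(z)-f(w)|\leq \frac{4|z-w|}{(1-r)^2}\frac{\dist(f(w),\partial D')}{\dist(w,\partial D)}$ for a conformal map between Jordan domains. This is a purely complex-analytic fact, which the paper quotes from Corollary 3.25 of \cite{Law05} without proof; it involves no percolation or CLE$_6$ input at all. What you have written is instead a proof of Lemma \ref{l18} (the modified CLE$_6$ nesting estimate), in which Lemma \ref{l19} appears only as a tool. As an argument for Lemma \ref{l19} itself your text is therefore vacuous, and in fact circular: you explicitly invoke the distortion control ``from Koebe, i.e.\ Lemma \ref{l19}'' in the middle of the argument, which is exactly the statement to be established.

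A direct proof of Lemma \ref{l19} runs as follows. Set $d:=\dist(w,\partial D)$ and define $g(\zeta):=f(w+d\zeta)$ for $\zeta\in\mathbb{D}$, which is univalent on $\mathbb{D}$. The growth theorem for univalent functions gives $|g(\zeta)-g(0)|\leq |g'(0)|\,|\zeta|/(1-|\zeta|)^2$, and the Koebe one-quarter theorem gives $\dist(g(0),\partial D')\geq |g'(0)|/4$, i.e.\ $|g'(0)|\leq 4\,\dist(f(w),\partial D')$. Taking $\zeta=(z-w)/d$ with $|\zeta|\leq r$ yields the claimed bound. For what it is worth, as a proof of Lemma \ref{l18} your argument is essentially the one in the paper: the same positive-probability conditioning on $\{\mathcal{L}_1\subset\mathbb{D}\backslash\mathbb{D}(1/2)\}$, the same use of the conformal renewal property and of Lemma \ref{l1} for the image CLE$_6$, and the same bookkeeping for the index shift and the annulus distortion. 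But that is the wrong target here.
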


\begin{proof}[Proof of Lemma \ref{l18}]
Theorem \ref{t5} together with RSW and FKG implies that, there
exists $p_0\in (0,1)$ such that
\begin{equation}\label{e85}
\mathbb{P}[\mathcal {L}_1\subset \mathbb{D}\backslash
\mathbb{D}(1/2)]\geq p_0.
\end{equation}
Suppose that the event $\{\mathcal {L}_1\subset \mathbb{D}\backslash
\mathbb{D}(1/2)\}$ holds.  Let $f:\overline{U}_1\rightarrow
\overline{\mathbb{D}}$ be a continuous function that maps $U_1$
conformally onto $\mathbb{D}$ with $f(0)=0$.  By Lemma \ref{l19} and
Schwarz Lemma, for all $|z|<1/2$,
\begin{equation*}
|z|\leq |f(z)|\leq \frac{8|z|}{(1-2|z|)^2}.
\end{equation*}
Therefore, for fixed large $M$ and all small $\varepsilon$, one has
\begin{equation}\label{e51}
f\left(\overline{\mathbb{D}(\varepsilon)}\backslash
\mathbb{D}(\varepsilon/M)\right)\supset\overline{\mathbb{D}(\varepsilon)}\backslash
\mathbb{D}(10\varepsilon/M).
\end{equation}
By the conformal invariance and renewal property of of CLE$_6$ (see
e.g. Proposition 1 of \cite{SSW09}), the law of
$f(\Gamma|_{\overline{U}_1})$ is CLE$_6$ in $\overline{\mathbb{D}}$.
By Lemma \ref{l1}, for $f(\Gamma|_{\overline{U}_1})$ in
$\overline{\mathbb{D}}$, we know that for large $M$, the event
$\widetilde{\mathcal {E}}(\varepsilon,\delta,\nu)$ that there is a
loop which is contained in the annulus
$\overline{\mathbb{D}(\varepsilon)}\backslash
\mathbb{D}(10\varepsilon/M)$ and which surrounds 0, and the index
$\tilde{J}$ of this loop satisfies $\nu\log(1/\varepsilon)\leq
\tilde{J}-1\leq (\nu+\delta(\varepsilon))\log(1/\varepsilon)$, has
probability at least $\varepsilon^{\gamma(\nu)+o(1)}$ as
$\varepsilon\rightarrow 0$.  Note that the index $J$ of the preimage
$\mathcal {L}_J$ of the above loop equals $\tilde{J}+1$, and
$\mathcal {L}_J\subset\overline{\mathbb{D}(\varepsilon)}\backslash
\mathbb{D}(\varepsilon/M)$ by (\ref{e51}).  Then by (\ref{e85}) we
have
\begin{equation*}
\mathbb{P}[\mathcal
{E}(\varepsilon,\delta,\nu)]\geq\mathbb{P}\left[\mathcal
{L}_1\subset \mathbb{D}\backslash
\mathbb{D}(1/2)\right]\mathbb{P}\left[\widetilde{\mathcal
{E}}(\varepsilon,\delta,\nu)|\mathcal {L}_1\subset
\mathbb{D}\backslash \mathbb{D}(1/2)\right]\geq
\varepsilon^{\gamma(\nu)+o(1)},
\end{equation*}
which proves Lemma \ref{l18}.
\end{proof}
\subsection{Estimates for cluster boundary loops and circuits}
Let us consider cluster boundary loops in $B(R)$ with monochromatic
boundary condition.  For $k\in\mathbb{N}$, let $\mathcal {L}_k(R)$
be the $k$th largest cluster boundary loop that surrounds 0 in
$B(R)$.  In the following, we let $M=M(\nu)$ and
$\delta(\varepsilon)$ be some fixed sufficiently large constant and
some fixed function in Lemma \ref{l18}, respectively.  Define the
discrete version of the event $\mathcal {E}(\varepsilon,\delta,\nu)$
as follows.
\begin{equation*}
\mathcal {E}(R,\varepsilon,\delta,\nu):=\left\{
\begin{aligned}
&\mbox{(i) $\mathcal {L}_1(R)\subset A(R/2,R)$ and $\mathcal {L}_1(R)$ does not touch $\partial B(R)$, and}\\
&\mbox{(ii) there exists a loop
$\mathcal {L}_J(R)\subset A(\varepsilon R/M,\varepsilon R)$, with the the index $J$}\\
&\mbox{satisfing $\nu\log(1/\varepsilon)\leq J\leq
(\nu+\delta(\varepsilon))\log(1/\varepsilon)$}
\end{aligned}
\right\}.
\end{equation*}
Similarly to Lemma \ref{l18}, for the discrete model we have the
following lemma.
\begin{lemma}\label{l2}
Fix $\nu\geq 0$.  For each $\eta>0$, there exists
$\varepsilon_0=\varepsilon_0(\eta)$, such that for each
$\varepsilon\in (0,\varepsilon_0]$, there exists
$R_0=R_0(\eta,\varepsilon,\delta)$, such that for all $R>R_0$,
\begin{equation*}
\mathbf{P}[\mathcal
{E}(R,\varepsilon,\delta,\nu)]\geq\varepsilon^{\gamma(\nu)+\eta/2},
\end{equation*}
where $\delta(\varepsilon)\rightarrow 0$ as $\varepsilon\rightarrow
0$.
\end{lemma}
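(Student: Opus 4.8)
The plan is to transfer the continuum bound of Lemma \ref{l18} to the discrete model via the convergence of percolation cluster boundaries to CLE$_6$ (Theorem \ref{t5}), applied to critical percolation in $B(R)$ with monochromatic boundary condition, rescaled by the factor $1/R$. Fix $\nu\ge 0$ and $\eta>0$, and fix a sufficiently large constant $M$ and a function $\delta$ decreasing to $0$ sufficiently slowly, as in Lemma \ref{l18}. Since $\mathbb{P}[\mathcal{E}(\varepsilon,\delta,\nu)]\ge\varepsilon^{\gamma(\nu)+o(1)}$ by Lemma \ref{l18}, there is $\varepsilon_0=\varepsilon_0(\eta)>0$ such that $\mathbb{P}[\mathcal{E}(\varepsilon,\delta,\nu)]\ge\varepsilon^{\gamma(\nu)+\eta/4}$ for all $\varepsilon\le\varepsilon_0$; the gap between the exponents $\eta/4$ and $\eta/2$ is the room left for the approximation error. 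Regarding the rescaled collection of cluster boundary loops as a random element of the loop-configuration space of Theorem \ref{t5}, its law converges weakly to $\mathbb{P}$ as $R\to\infty$, and for fixed $\varepsilon$ the event $\mathcal{E}(R,\varepsilon,\delta,\nu)$ is, up to an $O(1/R)$ change of the radii, a fixed subset $E=E(\varepsilon,M,\delta,\nu)$ of that space.

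The obstruction to reading the lemma off directly is that $E$ is not a continuity set for $\mathbb{P}$: a loop can lie marginally on the boundary of a prescribed annulus, and---more seriously---the index $J$ of a loop near scale $\varepsilon$ is not a continuous functional of the configuration, since an arbitrarily small perturbation can change the number of loops surrounding $0$ above a given scale. I would therefore replace $E$ by a robust \emph{open} event $\mathcal{O}=\mathcal{O}(\varepsilon,M,\delta,\nu)$, depending on a small slack $\alpha>0$, requiring: (i) the outermost loop $\mathcal{L}_1$ surrounding $0$ lies in the open annulus $\mathbb{D}(1-\alpha)\backslash\overline{\mathbb{D}((1+\alpha)/2)}$ and surrounds $\overline{\mathbb{D}(1/4)}$; and (ii), with $N^{\star}$ denoting the number of loops surrounding $0$ not contained in $\mathbb{D}(\varepsilon/(2M))$, one has $N^{\star}$ in a slightly narrower window contained in $[\nu\log(1/\varepsilon),(\nu+\delta(\varepsilon))\log(1/\varepsilon)]$, the loop $\mathcal{L}_{N^{\star}}$ lies at positive distance inside $\mathbb{D}((1-\alpha)\varepsilon)\backslash\overline{\mathbb{D}(\varepsilon/M)}$, and the next loop $\mathcal{L}_{N^{\star}+1}$ is contained in $\mathbb{D}(\varepsilon/(4M))$. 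The ``gap below'' in the last requirement guarantees that no loop surrounds $0$ at scale $\asymp\varepsilon/(2M)$, so $N^{\star}$ and the loops $\mathcal{L}_1,\dots,\mathcal{L}_{N^{\star}}$ are stable under small perturbations and every constraint defining $\mathcal{O}$ is open. Moreover, since $\mathbb{D}(1-\alpha)\backslash\overline{\mathbb{D}((1+\alpha)/2)}\subset\mathbb{D}\backslash\mathbb{D}(1/2)$ and $\mathbb{D}((1-\alpha)\varepsilon)\backslash\overline{\mathbb{D}(\varepsilon/M)}\subset\mathbb{D}(\varepsilon)\backslash\overline{\mathbb{D}(\varepsilon/M)}$, if the rescaled discrete configuration lies in $\mathcal{O}$ then $\mathcal{E}(R,\varepsilon,\delta,\nu)$ holds once $R$ is large enough that the $O(1/R)$ radius discrepancies fit inside the slack $\alpha$.

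To bound $\mathbb{P}[\mathcal{O}]$ from below I would re-run the proof of Lemma \ref{l18} with slightly perturbed parameters $(\varepsilon',M',\delta')$, so that its conclusion places $\mathcal{L}_J$ at positive distance inside the smaller thin annulus with $J$ in the narrower window, and with (\ref{e85}) replaced by the event that $\mathcal{L}_1$ lies in $\mathbb{D}(1-\alpha)\backslash\overline{\mathbb{D}((1+\alpha)/2)}$ and surrounds $\overline{\mathbb{D}(1/4)}$, which---like (\ref{e85})---has positive probability by RSW and FKG. This produces an event $\mathcal{O}_0$ with $\mathbb{P}[\mathcal{O}_0]\ge\varepsilon^{\gamma(\nu)+o(1)}$, on which $\mathcal{L}_1$ and $\mathcal{L}_J$ are positioned as $\mathcal{O}$ demands and $N^{\star}\ge J$. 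Adjoining the requirement $\mathcal{L}_{J+1}\subset\mathbb{D}(\varepsilon/(4M))$---which then forces $N^{\star}=J$ and makes $\mathcal{O}$ occur---costs only a constant factor: conditionally on $\mathcal{L}_1,\dots,\mathcal{L}_J$ and $U_J$, the loops inside $U_J$ form a fresh CLE$_6$ in $U_J$ by the renewal property, so $Z_{J+1}$ is independent of $\mathcal{O}_0$; and on $\mathcal{O}_0$ one has $\overline{\mathbb{D}(\varepsilon/M)}\subset U_J\subset\mathbb{D}((1-\alpha)\varepsilon)$, so by Lemma \ref{l19} the uniformizing map of $U_J$ has bounded distortion near $0$, whence a standard RSW/FKG estimate (the largest CLE$_6$ loop surrounding $0$ in a disc lies in any fixed concentric subdisc with positive probability) gives that $\{\mathcal{L}_{J+1}\subset\mathbb{D}(\varepsilon/(4M))\}$ has conditional probability at least some $c=c(M)>0$. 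Hence $\mathbb{P}[\mathcal{O}]\ge c\,\varepsilon^{\gamma(\nu)+o(1)}\ge\varepsilon^{\gamma(\nu)+\eta/4}$ for $\varepsilon\le\varepsilon_0$ (after shrinking $\varepsilon_0$ if necessary). Since $\mathcal{O}$ is open, the weak convergence of Theorem \ref{t5} gives $\liminf_{R\to\infty}\mathbf{P}[\mathcal{E}(R,\varepsilon,\delta,\nu)]\ge\mathbb{P}[\mathcal{O}]\ge\varepsilon^{\gamma(\nu)+\eta/4}$, so there is $R_0=R_0(\eta,\varepsilon,\delta)$ with $\mathbf{P}[\mathcal{E}(R,\varepsilon,\delta,\nu)]\ge\varepsilon^{\gamma(\nu)+\eta/2}$ for all $R>R_0$.

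The step I expect to be the main obstacle is (ii): devising a reformulation of the index that is at once a continuity functional of the loop configuration, strong enough to imply $\mathcal{E}(R,\varepsilon,\delta,\nu)$ after rescaling, and only a bounded factor less likely than the event of Lemma \ref{l18}. The ``gap below'' does all three, but justifying the bounded-factor claim leans on the renewal property of CLE$_6$ together with the distortion estimate of Lemma \ref{l19} to control the shape of $U_J$ on the conditioning event; the accompanying bookkeeping---arranging the shrunk annuli and windows so that $\mathcal{O}\subseteq E$ while $\mathbb{P}[\mathcal{O}]$ retains the exponent $\gamma(\nu)+\eta/4$---is routine but delicate.
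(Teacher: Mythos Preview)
Your overall plan---manufacture an open event $\mathcal{O}$ in the loop space with $\mathbb{P}[\mathcal{O}]\ge\varepsilon^{\gamma(\nu)+\eta/4}$, arrange that membership of the rescaled discrete configuration in $\mathcal{O}$ forces $\mathcal{E}(R,\varepsilon,\delta,\nu)$, then invoke Theorem~\ref{t5} and the portmanteau inequality for open sets---is a reasonable way to transfer Lemma~\ref{l18} to the lattice. The difficulty is that the event $\mathcal{O}$ you describe is \emph{not} open in the topology induced by the metric~(\ref{e66}). Your ``gap below'' guarantees that no loop sits near the cutoff radius $\varepsilon/(2M)$, but that is only one of two sources of discontinuity in $N^{\star}$. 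The Hausdorff distance~(\ref{e66}) between two closed \emph{sets} of loops does not control multiplicities: if two consecutive nested CLE$_6$ loops $\mathcal{L}_i,\mathcal{L}_{i+1}$ with $i<N^{\star}$ happen to be close in the loop metric $\textrm{d}$ (which has positive probability, since CLE$_6$ loops may touch), one can find configurations arbitrarily close to $\mathcal{F}$ in $\dist$ in which the pair is replaced by a single loop, so that $N^{\star}$ drops by one. Conversely, a nearby configuration may contain several distinct loops all within $\varepsilon'$ of the same $\mathcal{L}_i$, so that $N^{\star}$ increases. Thus $\{N^{\star}=k\}$ is neither open nor closed, and the portmanteau step fails as written.

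This multiplicity issue is precisely what the paper's proof isolates and handles, but on the discrete side: after coupling so that $\dist(\mathcal{F}(R),\mathcal{F})\to 0$ a.s., it controls the event $\mathcal{B}(R,\varepsilon,\varepsilon')=\{\exists\,\mathcal{L},\mathcal{L}'\in\mathcal{F}(R):\textrm{d}(\mathcal{L},\mathcal{L}')<\varepsilon'\}$ by showing $\mathbf{P}[\mathcal{B}(R,\varepsilon,\varepsilon')]\to 0$ as $\varepsilon'\to 0$, uniformly in large $R$. The input here is that the polychromatic plane $6$-arm exponent exceeds $2$ (cf.~(\ref{e67})) together with the half-plane $3$-arm exponent near $\partial\mathbb{D}$. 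Once distinct discrete loops surrounding $0$ are known to be $\varepsilon'$-separated, the nearest-neighbour map to $\{\mathcal{L}_i\}$ is injective and the index is preserved, giving $\textrm{d}((1/R)\mathcal{L}_i(R),\mathcal{L}_i)\to 0$ in probability for each relevant $i$. Your argument can be repaired by adding a pairwise separation clause $\min_{i<N^{\star}}\textrm{d}(\mathcal{L}_i,\mathcal{L}_{i+1})\ge\alpha$ to $\mathcal{O}$, which does make it open; but then lower-bounding $\mathbb{P}[\mathcal{O}]$ requires exactly the $6$-arm input above (transferred to CLE$_6$, or pulled back to the lattice), so one does not save work over the paper's route.
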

\begin{proof}
The proof is standard, and is very similar to that of Proposition
3.1 in \cite{Yao18}.  For the reader's convenience we give some
details of the proof here.

By Lemma \ref{l18}, for each $\eta>0$, there exists
$\varepsilon_0=\varepsilon_0(\eta)$, such that for each
$\varepsilon\in (0,\varepsilon_0]$,
\begin{equation}\label{e58}
\mathbb{P}[\mathcal
{E}(\varepsilon,\delta,\nu)]\geq\varepsilon^{\gamma(\nu)+\eta/3},
\end{equation}
where $\delta\rightarrow 0$ as $\varepsilon\rightarrow 0$.

Let
\begin{align*}
&\mathcal {F}(R):=\mbox{the collection of cluster boundary loops
surrounding 0 in $A(R\varepsilon/M,R)$}\\
&\qquad\qquad\mbox{scaled by $1/R$},\\
&\mathcal {F}_1(R):=\mbox{the collection of cluster boundary loops
surrounding 0 in $A(R/2,R)$}\\
&\qquad\qquad\mbox{scaled by $1/R$},\\
&\mathcal {F}_2(R):=\mbox{the collection of cluster boundary loops
surrounding 0 in $A(R\varepsilon/M,R\varepsilon)$}\\
&\qquad\qquad\mbox{scaled by $1/R$},\\
&\mathcal {F}:=\mbox{the collection of CLE$_6$ loops surrounding 0
in $\overline{\mathbb{D}}\backslash\mathbb{D}(\varepsilon/M)$},\\
&\mathcal {F}_1:=\mbox{the collection of CLE$_6$ loops surrounding 0
in $\overline{\mathbb{D}}\backslash\mathbb{D}(1/2)$},\\
&\mathcal {F}_2:=\mbox{the collection of CLE$_6$ loops surrounding 0
in
$\overline{\mathbb{D}(\varepsilon)}\backslash\mathbb{D}(\varepsilon/M)$}.
\end{align*}
For $0<\varepsilon'<\varepsilon/M$, define the event
\begin{equation*}
\mathcal {A}(R,\varepsilon,\varepsilon'):=\left\{
\begin{aligned}
&\mbox{$\exists \mathcal {L}$ surrounding 0 in
$A((1-\varepsilon')R\varepsilon/M,R)$, such that}\\
&\mathcal {L}\cap
\mathbb{D}((1+\varepsilon')R\varepsilon/M)\backslash
\mathbb{D}((1-\varepsilon')R\varepsilon/M)\neq\emptyset
\end{aligned}
\right\}.
\end{equation*}
Assume that $\mathcal {A}(R,\varepsilon,\varepsilon')$ holds and $R$
is large enough (depending on $\varepsilon'$).  Then we have a
polychromatic 3-arm event from a ball of radius
$3\varepsilon'R\varepsilon/M$ centered at a point $z\in\partial
\mathbb{D}((1-\varepsilon')R\varepsilon/M)$ to a distance of order
$R\varepsilon/M$ in $A((1-\varepsilon')R\varepsilon/M,R)$.  For a
fixed $z\in\partial \mathbb{D}((1-\varepsilon')R\varepsilon/M)$, the
corresponding 3-arm event happens with probability at most
$O((\varepsilon')^2)$ (see e.g. Lemma 6.8 in \cite{Sun11}).  From
this one easily obtains $\mathbf{P}[\mathcal
{A}(R,\varepsilon,\varepsilon')]\leq O(\varepsilon')$.  Then Theorem
\ref{t5} implies that $\mathcal {F}(R)$ converges in distribution to
$\mathcal {F}$ as $R\rightarrow\infty$.  Because of the choice of
topology, we can find coupled versions of $\mathcal {F}_R$ and
$\mathcal {F}$ on the same probability space such that
$\dist(\mathcal {F}(R),\mathcal {F})\rightarrow 0$ almost surely as
$R\rightarrow \infty$.  Similarly to the above argument, in the
above coupling we have $\dist(\mathcal {F}_1(R),\mathcal
{F}_1)\rightarrow 0$ and $\dist(\mathcal {F}_2(R),\mathcal
{F}_2)\rightarrow 0$ in probability as $R\rightarrow \infty$.

For $0<\varepsilon'<\varepsilon/M$, define the event
\begin{equation*}
\mathcal {B}(R,\varepsilon,\varepsilon'):=\{\exists \mathcal
{L},\mathcal {L}'\in \mathcal {F}(R)\mbox{ such that
}\textrm{d}(\mathcal {L},\mathcal {L}')<\varepsilon'\}.
\end{equation*}
Similarly to the proof of Proposition 3.1 in \cite{Yao18}, by using
that polychromatic half-plane 3-arm exponent is 2 and the
polychromatic plane 6-arm exponent is larger that 2 (see e.g.
\cite{Nol08}), one can prove that $\mathbf{P}[\mathcal
{B}(R,\varepsilon,\varepsilon')]\rightarrow 0$ as
$\varepsilon'\rightarrow 0$ and all large $R$ (depending on
$\varepsilon'$).  This implies that in the above coupling, for all
$1\leq i\leq (\nu+\delta(\varepsilon))\log(1/\varepsilon)$ with
$\mathcal {L}_i\subset \overline{\mathbb{D}(\varepsilon)}\backslash
\mathbb{D}(\varepsilon/M)$, $\textrm{d}(\mathcal {L}_i,(1/R)\mathcal
{L}_i(R))\rightarrow 0$ in probability as $R\rightarrow \infty$ (if
such $i$ exists).  This fact combined with the above argument gives
the the desired result.
\end{proof}

Now let us consider circuits in $B(R)$.  Let $D_0:=B(R)$.  Take
$\mathcal {C}_1(R)$ the outermost yellow circuit surrounding 0 in
$D_0$ and let $D_1$ be the component of $D_0\backslash \mathcal
{C}_1(R)$ that contains 0, then take $\mathcal {C}_2(R)$ the
outermost yellow circuit surrounding 0 in $D_1$ and let $D_2$ be the
component of $D_1\backslash \mathcal {C}_2(R)$ that contains 0, and
so on.  We call $\mathcal {C}_k(R)$ the $k$th outermost yellow
circuit that surrounds 0 in $B(R)$.  Define the event
\begin{equation*}
\widehat{\mathcal {E}}(R,\varepsilon,\delta,\nu):=\left\{
\begin{aligned}
&\mbox{(i) $\mathcal {L}_1(R)\subset A(R/2,R)$ and $\mathcal {L}_1(R)$ does not touch $\partial B(R)$, and}\\
&\mbox{(ii) there exists a circuit
$\mathcal {C}_J(R)\subset A(\varepsilon R/M,\varepsilon R)$, with the the index $J$}\\
&\mbox{satisfing $\nu\log(1/\varepsilon)\leq J\leq
(\nu+\delta(\varepsilon))\log(1/\varepsilon)$}
\end{aligned}
\right\}.
\end{equation*}
We will derive the following lemma from Lemma \ref{l2} by a ``color
switching trick".

\begin{lemma}\label{ll2}
Fix $\nu\geq 0$.  For each $\eta>0$, there exists
$\varepsilon_0=\varepsilon_0(\eta)$, such that for each
$\varepsilon\in (0,\varepsilon_0]$, there exists
$R_0=R_0(\eta,\varepsilon,\delta)$, such that for all $R>R_0$,
\begin{equation*}
\mathbf{P}[\widehat{\mathcal
{E}}(R,\varepsilon,\delta,\nu)]\geq\varepsilon^{\gamma(\nu)+\eta/2},
\end{equation*}
where $\delta(\varepsilon)\rightarrow 0$ as $\varepsilon\rightarrow
0$.
\end{lemma}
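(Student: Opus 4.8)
The plan is to obtain Lemma \ref{ll2} from Lemma \ref{l2} by a color‑switching argument, using the interface structure of site percolation on $\mathbb{T}$ together with the invariance of $\mathbf{P}_{1/2}$ under swapping the two colors. Note first that conditions (i) of $\mathcal {E}(R,\varepsilon,\delta,\nu)$ and of $\widehat{\mathcal {E}}(R,\varepsilon,\delta,\nu)$ are literally the same (they only involve the loop $\mathcal {L}_1(R)$ and are unaffected by a global color swap), so the whole issue is to upgrade condition (ii) from a cluster boundary loop $\mathcal {L}_J(R)$ to a yellow circuit $\mathcal {C}_J(R)$ in a comparable annulus and with a comparable index.

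The key elementary observation I would use is that a cluster boundary loop surrounding $0$ has opposite colors on its two sides, so the hexagons adjacent to it from its yellow side form a yellow circuit surrounding $0$, and those from its blue side form a blue circuit surrounding $0$; conversely, the $k$th outermost yellow circuit in a concentric subdomain hugs the outer boundary loop of the yellow cluster it belongs to. Hence, on the event $\mathcal {E}(R,\varepsilon,\delta,\nu)$ produced by Lemma \ref{l2} (run with, say, monochromatic blue boundary condition on $B(R)$), the loop $\mathcal {L}_J(R)\subset A(\varepsilon R/M,\varepsilon R)$ is flanked on its yellow side by a yellow circuit lying within a bounded number of hexagons of $\mathcal {L}_J(R)$, hence inside a slightly enlarged annulus $A(\varepsilon R/M',\varepsilon R)$. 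To pin down the index of this yellow circuit I would invoke Proposition \ref{p3}(i) and (iii): cluster boundary loops and yellow circuits surrounding $0$ accumulate at the same logarithmic rate $1/(2\sqrt3\pi)$, and combining this with the a priori arm bounds, extendability and quasi‑multiplicativity of Section \ref{pre} (exactly as in the proof of Proposition \ref{p3} in \cite{Yao18}) one controls the number of loops that can sit strictly between two consecutive nested yellow circuits, so that the index of the extracted yellow circuit differs from $J$ only by $o(\log(1/\varepsilon))$ uniformly over the scales between $\varepsilon R$ and $R$. The role of the color switch is precisely to symmetrize this last estimate: running Lemma \ref{l2} under both the blue and the yellow monochromatic boundary conditions — two events of equal probability that are interchanged by the global color swap, under which cluster boundary loops are preserved while yellow and blue circuits are exchanged — removes any dependence on which side of $\mathcal {L}_J(R)$ happens to carry yellow. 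Finally I would re‑choose $M$, enlarge $\delta$ and shrink $\varepsilon_0$ (and enlarge $R_0$) to absorb the passage from $A(\varepsilon R/M',\varepsilon R)$ back to $A(\varepsilon R/M,\varepsilon R)$ and the $o(\log(1/\varepsilon))$ index slack; since Lemma \ref{l2} already yields probability at least $\varepsilon^{\gamma(\nu)+\eta/3}$, the resulting bound is still at least $\varepsilon^{\gamma(\nu)+\eta/2}$.

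The step I expect to be the main obstacle is exactly this index matching: showing that the number of cluster boundary loops surrounding $0$ that lie strictly between two consecutive nested yellow circuits does not accumulate to order $\log(1/\varepsilon)$ along the relevant range of scales, and doing so \emph{on} the (rare) event delivered by Lemma \ref{l2} rather than merely in distribution. I would handle this with the standard RSW/BK/FKG and quasi‑multiplicativity toolkit — the same one behind Proposition \ref{p3} and Lemma \ref{l10} — bounding the total number of such "extra" loops by a multiple of the number of disjoint monochromatic circuits in the annulus and using Lemma \ref{l10}; the discrepancy this produces is harmless because $\delta(\varepsilon)$ may be taken to decrease to $0$ arbitrarily slowly and $\eta$ can be chosen as small as we like.
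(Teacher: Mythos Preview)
Your proposal correctly identifies index matching as the crux, but the proposed fix does not work. On a fixed configuration the $k$th cluster-boundary loop $\mathcal{L}_k(R)$ and the $k$th outermost yellow circuit $\mathcal{C}_k(R)$ are \emph{not} close in index: the nested loops surrounding $0$ alternate orientation, so roughly only every second loop bounds a yellow region from outside, and each such yellow region may contain several disjoint yellow circuits (its ``thickness''). Consequently the index of the yellow circuit you extract from $\mathcal{L}_J(R)$ is not $J+o(\log(1/\varepsilon))$; it differs from $J$ by a multiplicative factor governed by these thicknesses. The equality $\rho(r,R)\stackrel{d}{=}N(r,R)$ of Proposition~\ref{p3}(iii) is purely distributional---it is obtained via a configuration-changing bijection---and says nothing about the two counts being close on the same $\omega$. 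Your appeal to Lemma~\ref{l10} and RSW/BK/FKG yields only unconditional tail bounds, which do not survive division by $\mathbf{P}[\mathcal{E}(R,\varepsilon,\delta,\nu)]=\varepsilon^{\gamma(\nu)+o(1)}$ and hence do not control the discrepancy on the rare event you are conditioning on. The global color swap you invoke only flips which side of a given loop is yellow; it does not produce any index correspondence between loops and yellow circuits.

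The paper's argument is precisely the measure-preserving bijection $f_n$ from the proof of Proposition~2.4 in \cite{Yao18}: given $\omega$ with $\rho(r,R)=n$, one swaps colors in the alternating regions $D_1\setminus D_2,\,D_3\setminus D_4,\ldots$ between the successive circuits $\mathcal{C}_k(\omega)$, obtaining $\omega'$ with $N(r,R)=n$. Under this map each $\mathcal{C}_k(\omega)$ becomes the outermost monochromatic circuit just inside $\mathcal{L}_k(\omega')$, so the indices match \emph{exactly}; the event $\widehat{\mathcal{E}}(R,\varepsilon,\delta,\nu)$ for $\omega$ corresponds to $\mathcal{E}(R,\varepsilon,\delta,\nu)$ for $\omega'=f_n(\omega)$, and since $f_n$ preserves $\mathbf{P}_{1/2}$ the two events have the same probability. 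This is the ``color switching trick'' the paper refers to, and it replaces your entire index-matching step with a one-line observation.
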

\begin{proof}
We use the proof of the second item in Proposition 2.4 of
\cite{Yao18}.  Assume that $B(R)$ has monochromatic boundary
condition.  It is proved that, for any fixed $n\in \mathbb{N}$, one
can construct a bijection $f_n$ between the sets
$\{\omega:\rho(r,R)=n\}$ and $\{\omega':N(r,R)=n\}$ as follows.
Given a configuration $\omega\in\{\rho(r,R)=n\}$, if $n$ is odd, we
switch the colors of the sites in $D_1\backslash
D_2,\ldots,D_{n-2}\backslash D_{n-1},D_n$; if $n$ is even, we switch
the colors of the sites in $D_1\backslash
D_2,\ldots,D_{n-1}\backslash D_n$.  Observe that under the
transformation $f_n$, each $\mathcal {C}_k(R)$ in the original
configuration $\omega$ corresponds to the outermost monochromatic
circuit inside $\mathcal {L}_k(R)$ that surrounds 0 in the
configuration $f_n(\omega)$.  This observation together with Lemma
\ref{l2} implies Lemma \ref{ll2}.
\end{proof}

Note that $\widehat{\mathcal {E}}(R,\varepsilon,\delta,\nu)$ is an
event about circuits surrounding 0.  Similarly, one can define the
event $\widehat{\mathcal {E}}(n;R,\varepsilon,\delta,\nu)$ about
circuits surrounding the point $n$, which includes analogous
conditions (i) and (ii).  It is clear that $\widehat{\mathcal
{E}}(0;R,\varepsilon,\delta,\nu)=\widehat{\mathcal
{E}}(R,\varepsilon,\delta,\nu)$.  For $k\in \mathbb{N}$, we write
$\mathcal {E}(n;k)=\widehat{\mathcal
{E}}(n;(\varepsilon/M)^{-k},\varepsilon,\delta,\nu)$ for notational
convenience.

Assume that the event $\mathcal {E}(n;k)$ holds.  Let $\mathcal
{\mathcal {C}}^+(n;k)$ be the outermost yellow circuit in
$B(n,(\varepsilon/M)^{-k})$ that surrounds $n$, and let $\mathcal
{\mathcal {C}}^-(n;k)$ be the outermost circuit satisfying condition
(ii).  Assume that $k\geq 2$ and the event $\mathcal
{E}(n;k)\cap\mathcal {E}(n;k-1)$ holds. Then define the event
\begin{equation*}
\mathcal {G}(n;k):=\{\mbox{there exists a blue path touching both
$\mathcal {C}^-(n;k)$ and $\mathcal {C}^+(n;k-1)$}\}.
\end{equation*}
For $i\geq 1$ and $j\geq i+1$, define the events
\begin{align*}
\mathcal {F}(n;i,j)=\mathcal
{F}(n;i,j;\varepsilon,\delta,\nu):=\bigcap_{k=i}^{j}\mathcal
{E}(n;k)\cap \bigcap_{k=i+1}^{j}\mathcal {G}(n;k).
\end{align*}
For convenience, write $\mathcal {F}(n;i,i):=\mathcal {E}(n;i)$ and
$\mathcal {F}(n;i):=\mathcal {F}(n;1,i)$.

\begin{lemma}\label{l20}
Fix $\nu\geq 0$.  For each $\eta>0$, there exists
$\varepsilon_0=\varepsilon_0(\eta)$, such that for each
$\varepsilon\in (0,\varepsilon_0]$, there exists
$C(\eta,\varepsilon,\delta,\nu)>0$, such that for all $j\geq i\geq
1$,
\begin{equation*}
\mathbf{P}[\mathcal {F}(0;i,j)]\geq
C\varepsilon^{(j-i+1)(\gamma(\nu)+\eta)},
\end{equation*}
where $\delta\rightarrow 0$ as $\varepsilon\rightarrow 0$.
\end{lemma}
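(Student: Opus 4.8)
The plan is to prove the bound by induction on the number of annuli $m:=j-i+1$, constructing $\mathcal{F}(0;i,j)$ scale by scale from the outside inward, with the spatial Markov property of critical percolation providing conditional independence between successive scales and RSW/FKG providing the gluings $\mathcal{G}(0;k)$. Write $R_k:=(\varepsilon/M)^{-k}$ and $q:=\gamma(\nu)+\eta$, so the goal is $\mathbf{P}[\mathcal{F}(0;i,j)]\ge C\varepsilon^{mq}$. Since $\nu$ is fixed from the outset, $M=M(\nu)$ and every $\nu$-dependent constant appearing below is fixed, and $C$ may depend on all of $\eta,\varepsilon,\delta,\nu$; in particular, finitely many bottom scales can always be absorbed into $C$, so after reducing to $i\ge i_0$ for a suitable threshold $i_0=i_0(\eta,\varepsilon,\delta)$ (large enough that $R_{i_0}>R_0(\eta,\varepsilon,\delta)$) we may invoke Lemma \ref{ll2} freely.

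The base case $m=1$ is immediate: $\mathcal{F}(0;i,i)=\mathcal{E}(0;i)=\widehat{\mathcal{E}}(0;R_i,\varepsilon,\delta,\nu)$, so Lemma \ref{ll2} gives $\mathbf{P}[\mathcal{F}(0;i,i)]\ge\varepsilon^{\gamma(\nu)+\eta/2}\ge\varepsilon^{q}$ for $i\ge i_0$. For $i<i_0$ (finitely many values, and only when $j\ge i_0$, else the whole event involves finitely many bounded scales and is treated the same way) I would write $\mathcal{F}(0;i,j)\supseteq\mathcal{F}(0;i_0,j)\cap\bigl(\bigcap_{k=i}^{i_0-1}\mathcal{E}(0;k)\cap\bigcap_{k=i+1}^{i_0}\mathcal{G}(0;k)\bigr)$ and realize the second factor explicitly inside the circuit $\mathcal{C}^-(0;i_0)$: it involves only finitely many events living at scales below the fixed radius $R_{i_0}$, hence has probability bounded below by a constant $c_1(\eta,\varepsilon,\delta,\nu)>0$ by a routine RSW/finite-energy argument. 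This reduces matters to $i\ge i_0$.

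For the inductive step, fix $i\ge i_0$ with $m\ge2$ and decompose $\mathcal{F}(0;i,j)=\mathcal{F}(0;i+1,j)\cap\mathcal{E}(0;i)\cap\mathcal{G}(0;i+1)$. Let $\mathcal{H}$ be the $\sigma$-field generated by exploring the configuration from outside inward until the innermost circuit $\mathcal{C}^-(0;i+1)\subset A(R_i,MR_i)$ of the event $\mathcal{F}(0;i+1,j)$ is uncovered; then $\mathcal{F}(0;i+1,j)\in\mathcal{H}$ and $\mathcal{C}^-(0;i+1)$ is $\mathcal{H}$-measurable, while, conditionally on $\mathcal{H}$ and on $\mathcal{F}(0;i+1,j)$, the configuration inside the Jordan domain $D$ bounded by $\mathcal{C}^-(0;i+1)$ is a fresh critical percolation with the monochromatic (yellow) boundary condition coming from $\mathcal{C}^-(0;i+1)$, and $B(R_i)\subset D\subset B(MR_i)$. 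I claim $\mathbf{P}[\mathcal{E}(0;i)\cap\mathcal{G}(0;i+1)\mid\mathcal{H}]\ge\varepsilon^{q}$ on $\mathcal{F}(0;i+1,j)$; multiplying by the inductive bound $\mathbf{P}[\mathcal{F}(0;i+1,j)]\ge C\varepsilon^{(m-1)q}$ then closes the induction. To prove the claim, note first that $\mathcal{E}(0;i)$ depends only on the configuration in $B(R_i)$ and, by its condition (i), is decided inside the largest cluster-boundary loop around $0$ in $B(R_i)$, which lies in $A(R_i/2,R_i)$ and hence does not feel the boundary data on $\partial B(R_i)$; together with the RSW lower bound for condition (i) (uniform over boundary conditions) and Lemma \ref{ll2} under $\mathbf{P}_{1/2}$, this gives $\mathbf{P}[\mathcal{E}(0;i)\mid\mathcal{H}]\ge\varepsilon^{\gamma(\nu)+\eta/2}$ on $\mathcal{F}(0;i+1,j)$. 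On $\mathcal{E}(0;i)$ the outermost yellow circuit $\mathcal{C}^+(0;i)=\mathcal{C}_1(R_i)$ surrounds $0$ at scale $\asymp R_i$, and — this being precisely the meaning of ``outermost'', via planar duality — there is an automatic blue connection inside $B(R_i)$ from $\mathcal{C}_1(R_i)$ to $\partial B(R_i)$, at no extra probabilistic cost. It then remains to extend it across the annular region $D\setminus B(R_i)$, of conformal modulus bounded in terms of the fixed $M$, by a blue crossing reaching $\mathcal{C}^-(0;i+1)$: RSW and FKG give that, conditionally on $\mathcal{H}$ and on the configuration revealed in $B(R_i)$, this happens with probability at least a positive constant $c_0=c_0(M)$. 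Concatenating the two blue pieces, which involve disjoint sets of sites (inside versus outside $B(R_i)$), realizes $\mathcal{G}(0;i+1)$, so $\mathbf{P}[\mathcal{E}(0;i)\cap\mathcal{G}(0;i+1)\mid\mathcal{H}]\ge c_0\,\varepsilon^{\gamma(\nu)+\eta/2}\ge\varepsilon^{q}$, the last step holding once $\varepsilon_0$ is taken small enough (in terms of $\eta$ and the fixed $M$) that $c_0\ge\varepsilon^{\eta/2}$.

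The step I expect to be the main obstacle is the gluing, and within it the verification that $\mathcal{C}^-(0;k)$ and $\mathcal{C}^+(0;k-1)$ both sit at scale $\asymp R_{k-1}$, so that each $\mathcal{G}(0;k)$ is a bounded-modulus blue crossing costing only a constant; this relies on the exact definition of the events $\widehat{\mathcal{E}}(0;R,\varepsilon,\delta,\nu)$ and on the color-switching construction behind Lemma \ref{ll2}, which is what links the outermost yellow circuit $\mathcal{C}_1(R)$ to the outermost macroscopic loop $\mathcal{L}_1(R)$ of condition (i). Two further points require care: choosing the exploration generating $\mathcal{H}$ so that the region carrying $\mathcal{G}(0;i+1)$ is genuinely disjoint from what has already been revealed and from the region $B(R_i)$ carrying $\mathcal{E}(0;i)$ (so that the concatenation above is legitimate); and checking that the per-scale losses — the constant $c_0$ and the boundary-condition correction to Lemma \ref{ll2} — are absorbed, uniformly in $m$, by the fixed slack $\eta/2$ between $\gamma(\nu)+\eta/2$ and $\gamma(\nu)+\eta$, which is exactly what dictates the choice of $\varepsilon_0$.
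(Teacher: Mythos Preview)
Your inductive scheme is essentially a repackaging of the paper's direct argument: the paper observes that the events $\mathcal{E}(0;k)$ live in disjoint annuli and are therefore independent, then conditions on all the circuits $\mathcal{C}^\pm(0;k)$ at once and uses FKG/RSW to lower--bound each gluing $\mathcal{G}(0;k)$ by a single one--arm probability $\mathbf{P}[\mathcal{A}_1(R_{k-1}/2,R_{k-1}M)]\ge p_0$, arriving at $\mathbf{P}[\mathcal{F}(0;i,j)]\ge p_0^{j-i}\prod_k\mathbf{P}[\mathcal{E}(0;k)]$ in one line. Your induction reproduces the same product, scale by scale.

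There is, however, a genuine gap in your gluing step, and it is precisely the obstacle you flag. First, the assertion that on $\mathcal{E}(0;i)$ the outermost yellow circuit $\mathcal{C}^+(0;i)=\mathcal{C}_1(R_i)$ sits at scale $\asymp R_i$ is not justified: condition (i) of $\widehat{\mathcal{E}}$ pins the outermost \emph{loop} $\mathcal{L}_1(R_i)$ in $A(R_i/2,R_i)$, not the outermost yellow \emph{circuit}, and the colour--switching bijection behind Lemma~\ref{ll2} does not force $\mathcal{C}_1(R_i)$ to be macroscopic. Second, and more seriously, your concatenation fails as written. You produce (a) the automatic blue arm from $\mathcal{C}^+(0;i)$ to $\partial B(R_i)$ inside $B(R_i)$, and (b) an independent blue crossing of $D\setminus B(R_i)$ from $\partial B(R_i)$ to $\mathcal{C}^-(0;i+1)$; but these land at two unrelated points of $\partial B(R_i)$ and need not form a single blue path, so $\mathcal{G}(0;i+1)$ is not realised. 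Asking instead for a blue arm from the \emph{specific} endpoint of (a) across $D\setminus B(R_i)$ costs a half--plane one--arm probability that decays in $R_i$, which destroys the uniform constant $c_0$.

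The paper avoids both issues simultaneously. It does not condition on the full configuration in $B(R_i)$ but only on the circuits $\mathcal{C}^\pm$; given these, the law in the inter--circuit region $D_{k}$ is percolation conditioned on an increasing blue event (a blue circuit in $A(R_{k-1}/2,R_{k-1})$, coming from condition (i), together with the blue arms from $\mathcal{C}^+(0;k-1)$ to $\partial B(R_{k-1})$ coming from outermostness). That blue circuit is the missing connector: any blue one--arm crossing $A(R_{k-1}/2,R_{k-1}M)$ must hit it, and the outermostness arm must also hit it, so the three pieces together give $\mathcal{G}(0;k)$. FKG then lower--bounds the conditional probability of the one--arm event by its unconditional value $p_0(M)$, uniformly in the (possibly small) location of $\mathcal{C}^+(0;k-1)$. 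If you rewrite your inductive step this way---condition on $\mathcal{C}^+(0;i)$ rather than on the whole of $B(R_i)$, and invoke the blue circuit from condition (i) to glue---your argument goes through.
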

\begin{proof}
It is obvious that the case of $j=i\geq 1$ follows from Lemma
\ref{ll2}.  So we assume that $j>i\geq 1$ in the following.

It is clear that conditioned on the event $\mathcal
{E}(0;i)\cap\cdots\cap \mathcal {E}(0;j)$ and the circuits $\mathcal
{C}^-(0;k)$, $\mathcal {C}^+(0;k), i\leq k\leq j$, the events
$\mathcal {G}(0;k)$, $i+1\leq k\leq j$ are independent, and
furthermore, the probability measure of the configuration in
$D_k:=\{\mbox{interior of }\mathcal {C}^-(0;k)\}\backslash
\overline{\mathcal {C}^+(0;k-1)}$ is just the percolation measure in
$D_k$ conditioned that there is a blue circuit surrounding 0 in
$B((M/\varepsilon)^{k-1})\backslash \overline{\mathcal
{C}^+(0;k-1)}$ and for each site in $\mathcal {C}^+(0;k-1)$ there is
a blue path touching this site and $\partial
B((M/\varepsilon)^{k-1})$.    Then by FKG and RSW, there exists an
absolute constant $p_0\in (0,1)$ (depending only on $M$), such that
\begin{align}
&\mathbf{P}[\mathcal {F}(0;i,j)]\nonumber\\
&=\sum_{\{C^-(0;k),C^+(0;k): i\leq k\leq j\}}\mathbf{P}[\mathcal
{C}^-(0;k)=C^-(0;k),\mathcal {C}^+(0;k)=C^+(0;k),i\leq k\leq
j]\nonumber\\
&\quad\times\mathbf{P}\left[\bigcap_{k=i+1}^{j}\mathcal
{G}(0;k)\mid\mathcal
{C}^-(0;k)=C^-(0;k),\mathcal {C}^+(0;k)=C^+(0;k),i\leq k\leq j\right]\nonumber\\
&=\sum_{\{C^-(0;k),C^+(0;k): i\leq k\leq j\}}\mathbf{P}[\mathcal
{C}^-(0;k)=C^-(0;k),\mathcal {C}^+(0;k)=C^+(0;k),i\leq k\leq
j]\nonumber\\
&\quad\times\prod_{k=i+1}^{j}\mathbf{P}\left[\mathcal
{G}(0;k)\mid\mathcal
{C}^-(0;k)=C^-(0;k),\mathcal {C}^+(0;k)=C^+(0;k),i\leq k\leq j\right]\nonumber\\
&\geq\prod_{k=i}^{j}\mathbf{P}[\mathcal
{E}(0;k)]\prod_{k=i}^{j-1}\mathbf{P}[\mathcal
{A}_1((M/\varepsilon)^k/2,(M/\varepsilon)^kM)]\nonumber\\
&\geq p_0^{j-i}\prod_{k=i}^{j}\mathbf{P}[\mathcal
{E}(0;k)]\label{e86}.
\end{align}
By Lemma \ref{ll2}, for each $\eta>0$, there exists
$\varepsilon_1=\varepsilon_1(\eta)$, such that for each
$\varepsilon\in (0,\varepsilon_1]$, there exists
$k_1=k_1(\eta,\varepsilon,\delta)$, such that for all $k\geq k_1$,
\begin{equation*}
\mathbf{P}[\mathcal {E}(0;k)]\geq\varepsilon^{\gamma(\nu)+\eta/2},
\end{equation*}
where $\delta\rightarrow 0$ as $\varepsilon\rightarrow 0$.  This and
(\ref{e86}) implies that for each $\eta>0$, there exists
$\varepsilon_2=\varepsilon_2(\eta)$, such that for each
$\varepsilon\in (0,\varepsilon_2]$ and for all $j>i\geq k_1$,
\begin{equation*}
\mathbf{P}[\mathcal {F}(0;i,j)]\geq p_0^{j-i}
\varepsilon^{(j-i+1)(\gamma(\nu)+\eta/2)}\geq
\varepsilon^{(j-i+1)(\gamma(\nu)+\eta)},
\end{equation*}
which gives the lemma immediately.
\end{proof}
The following result is easy to derive from Lemma \ref{l20}, and is
needed for our second moment method.
\begin{lemma}\label{l3}
Fix $\nu\geq 0$.  For each $\eta>0$, there exists
$\varepsilon_0=\varepsilon_0(\eta)$, such that for each
$\varepsilon\in (0,\varepsilon_0]$, there exists
$C(\eta,\varepsilon,\delta,\nu)>0$, such that for all $n\geq 1$ and
$j\geq\max\{1,\lfloor\log_{M/\varepsilon}(n/2)\rfloor\}$,
\begin{equation*}
\mathbf{P}[\mathcal {F}(0;j)\cap \mathcal {F}(n;j)]\leq
Cn^{-(\gamma(\nu)+\eta)}(M/\varepsilon)^
{j(\gamma(\nu)+\eta)}\left\{\mathbf{P}[\mathcal {F}(0;j)]\right\}^2,
\end{equation*}
where $\delta\rightarrow 0$ as $\varepsilon\rightarrow 0$.
\end{lemma}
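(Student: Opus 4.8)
The plan is a multi-scale decomposition of the kind used for second-moment estimates: below the scale $\asymp n$ the two nested families of circuits around $0$ and around $n$ occupy disjoint regions and are independent, while above that scale the two families effectively merge, so the joint event there behaves like a single family. Write $\rho:=M/\varepsilon$; by shrinking $\varepsilon_0(\eta)$ we may assume $\rho$ is as large as we wish (in particular $\rho>3$), and set $m=m(n):=\lfloor\log_\rho(n/2)\rfloor$, so that $\rho^m\le n/2<\rho^{m+1}$ whenever $m\ge1$. Throughout write $\gamma:=\gamma(\nu)$.

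The one preliminary I would record is a quasi-multiplicativity lower bound with an absolute constant: $\mathbf P[\mathcal F(0;1,b)]\ge p_0\,\mathbf P[\mathcal F(0;1,a)]\,\mathbf P[\mathcal F(0;a+1,b)]$ for all $1\le a<b$. This is precisely the computation \eqref{e86} in the proof of Lemma \ref{l20}: $\mathcal F(0;1,a)$ and $\mathcal F(0;a+1,b)$ are measurable with respect to the disjoint regions $B(0,\rho^a)$ and $\mathbb C\setminus B(0,\rho^a)$ (so their intersection factorizes), and one only needs to insert the single connection event $\mathcal G(0;a+1)$ to obtain $\mathcal F(0;1,b)$, which by RSW and FKG costs a uniform factor $p_0$ conditionally on the two circuits it joins. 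Iterating over a bounded number of scales and using Lemma \ref{l20} (with $\mathbf P[\mathcal F(0;m+1,m+2)]\ge C\varepsilon^{2(\gamma+\eta)}$) gives $\mathbf P[\mathcal F(0;1,m+2)]\ge c_1\mathbf P[\mathcal F(0;1,m)]$ for a constant $c_1=c_1(\eta,\varepsilon,\delta,\nu)>0$; and since $\varepsilon\ge\rho^{-1}$, Lemma \ref{l20} also yields $\rho^{j(\gamma+\eta)}\mathbf P[\mathcal F(0;j)]\ge c$.

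The main case is $n\ge2\rho$ (so $m\ge1$) and $j\ge m+3$. Discarding from $\mathcal F(0;j)$ the events at and connecting scales $m+1,m+2$ gives $\mathcal F(0;j)\subseteq\mathcal F(0;1,m)\cap\mathcal F(0;m+3,j)$, while $\mathcal F(n;j)\subseteq\mathcal F(n;1,m)$; hence $\mathcal F(0;j)\cap\mathcal F(n;j)\subseteq\mathcal F(0;1,m)\cap\mathcal F(n;1,m)\cap\mathcal F(0;m+3,j)$. These three events are measurable (up to the standard, probabilistically harmless modifications due to consecutive annuli sharing a layer of hexagons) with respect to $B(0,\rho^m)$, $B(n,\rho^m)$ and $\mathbb C\setminus B(0,\rho^{m+2})$; since $2\rho^m\le n$, and also $n+\rho^m<\rho^{m+2}$ (using $n<2\rho^{m+1}$ and $\rho>3$) so that $B(n,\rho^m)\subseteq B(0,\rho^{m+2})$, these regions are pairwise disjoint, whence by independence and translation invariance $\mathbf P[\mathcal F(0;j)\cap\mathcal F(n;j)]\le C\,\mathbf P[\mathcal F(0;1,m)]^2\,\mathbf P[\mathcal F(0;m+3,j)]$. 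Applying the quasi-multiplicativity bound twice (splitting $\mathcal F(0;1,j)$ at $m+2$ and then $\mathcal F(0;1,m+2)$ at $m$) gives $\mathbf P[\mathcal F(0;1,m)]\le C\,\mathbf P[\mathcal F(0;j)]\,/\,\mathbf P[\mathcal F(0;m+3,j)]$, so that, using $\mathbf P[\mathcal F(0;m+3,j)]\ge C\varepsilon^{(j-m-2)(\gamma+\eta)}$ from Lemma \ref{l20},
\[
\mathbf P[\mathcal F(0;j)\cap\mathcal F(n;j)]\le C\,\varepsilon^{-(j-m-2)(\gamma+\eta)}\,\mathbf P[\mathcal F(0;j)]^2 .
\]
Finally one checks directly that $\varepsilon^{-(j-m-2)(\gamma+\eta)}=\big(n\varepsilon^{m+2}M^{-j}\big)^{\gamma+\eta}\,n^{-(\gamma+\eta)}(M/\varepsilon)^{j(\gamma+\eta)}$ and that $n\varepsilon^{m+2}M^{-j}<2\varepsilon M^{m+1-j}\le2$, because $n<2\rho^{m+1}=2M^{m+1}\varepsilon^{-(m+1)}$ and $j\ge m+3$; this gives the desired bound in this case.

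It remains to treat the degenerate cases by cruder estimates and an enlargement of $C$. If $n\ge2\rho$ but $j\le m+2$, then $\mathcal F(0;j)\cap\mathcal F(n;j)\subseteq\mathcal F(0;1,m)\cap\mathcal F(n;1,m)$, so $\mathbf P[\mathcal F(0;j)\cap\mathcal F(n;j)]\le\mathbf P[\mathcal F(0;1,m)]^2\le c_1^{-2}\mathbf P[\mathcal F(0;j)]^2$, while $n^{-(\gamma+\eta)}(M/\varepsilon)^{j(\gamma+\eta)}\ge(2\rho)^{-(\gamma+\eta)}$; and if $n<2\rho$, then $\mathbf P[\mathcal F(0;j)\cap\mathcal F(n;j)]\le\mathbf P[\mathcal F(0;j)]\le c^{-1}(M/\varepsilon)^{j(\gamma+\eta)}\mathbf P[\mathcal F(0;j)]^2$ by $\rho^{j(\gamma+\eta)}\mathbf P[\mathcal F(0;j)]\ge c$, and $n^{-(\gamma+\eta)}$ is bounded below. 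Taking $C$ to be the largest of the constants produced in the three cases completes the proof. The one genuinely delicate point is the localization bookkeeping — verifying that events living at well-separated scales are, up to negligible modifications, functions of the configuration in disjoint regions — but this is routine given the arguments already invoked in the proofs of Lemmas \ref{ll2} and \ref{l20}.
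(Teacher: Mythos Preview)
Your proof is correct and follows essentially the same approach as the paper's: a multi-scale split at the scale $m=j_1:=\lfloor\log_{M/\varepsilon}(n/2)\rfloor$, independence of the small-scale events around $0$ and $n$ from each other and from the large-scale event around $0$, and the quasi-multiplicativity lower bound \eqref{e86} to convert the resulting product back into $\mathbf P[\mathcal F(0;j)]^2$. The only cosmetic differences are that the paper uses the cut-off $j_2:=\lceil\log_{M/\varepsilon}(3n/2)\rceil\in\{m+1,m+2\}$ while you use $m+3$ (and correspondingly need $\rho>3$ to ensure $B(n,\rho^m)\subset B(0,\rho^{m+2})$), and that your disjointness bookkeeping and final arithmetic are written out a bit more explicitly; the three cases you treat match the paper's cases \eqref{e88}, \eqref{e89}, \eqref{e55}.
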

\begin{proof}
By Lemma \ref{l20}, for each $\eta>0$, we can choose a small
$\varepsilon_0=\varepsilon_0(\eta)>0$, such that for each
$\varepsilon\in (0,\varepsilon_0]$, there exists
$C(\eta,\varepsilon,\delta,\nu)>0$, such that for all $j\geq i\geq
1$,
\begin{equation}\label{e87}
\mathbf{P}[\mathcal {F}(0;i,j)]\geq
C\varepsilon^{(j-i+1)(\gamma(\nu)+\eta)},
\end{equation}
where $\delta\rightarrow 0$ as $\varepsilon\rightarrow 0$.
Therefore, if $1\leq n<2M/\varepsilon$, then we have
\begin{equation}\label{e88}
\mathbf{P}[\mathcal {F}(0;j)\cap \mathcal {F}(n;j)]\leq
\mathbf{P}[\mathcal {F}(0;j)]\leq C^{-1}\varepsilon^
{-j(\gamma(\nu)+\eta)}\left\{\mathbf{P}[\mathcal
{F}(0;j)]\right\}^2.
\end{equation}
In the following, we assume that $n\geq 2M/\varepsilon$.   Write
\begin{equation*}
j_1:=\lfloor\log_{M/\varepsilon}(n/2)\rfloor,~j_2:=\lceil\log_{M/\varepsilon}(3n/2)\rceil.
\end{equation*}
Note that $j_2$ equals $j_1+1$ or $j_1+2$.   By (\ref{e87}) and the
argument in the proof of Lemma \ref{l20}, there exists
$C_1(\eta,\varepsilon,\delta,\nu)>0$, such that
\begin{equation}\label{e56}
\mathbf{P}[\mathcal {F}(0;j)]\geq p_0\mathbf{P}[\mathcal
{F}(0;j_1)]\mathbf{P}[\mathcal {F}(0;j_1+1,j)]\geq
C_1\varepsilon^{(j-j_1)(\gamma(\nu)+\eta)}\mathbf{P}[\mathcal
{F}(0;j_1)],
\end{equation}
where we let $\mathbf{P}[\mathcal {F}(0;j_1+1,j)]=1$ if $j=j_1$.  If
$j_2>j$, the above inequality gives
\begin{equation}\label{e89}
\mathbf{P}[\mathcal {F}(0;j)\cap \mathcal {F}(n;j)]\leq
(\mathbf{P}[\mathcal {F}(0;j_1)])^2\leq C_1^{-1}\varepsilon^
{-2(\gamma(\nu)+\eta)}\left\{\mathbf{P}[\mathcal
{F}(0;j)]\right\}^2.
\end{equation}
In the following, we assume that $j_2\leq j$.  By (\ref{e87}) and
the argument in the proof of Lemma \ref{l20}, there exists
$C_2(\eta,\varepsilon,\delta,\nu)>0$, such that
\begin{align}
\mathbf{P}[\mathcal {F}(0;j)]&\geq p_0^2\mathbf{P}[\mathcal
{F}(0;j_1)]\mathbf{P}[\mathcal {F}(0;j_1+1,j_2)]\mathbf{P}[\mathcal
{F}(0;j_2,j)]\nonumber\\
&\geq C_2\mathbf{P}[\mathcal {F}(0;j_1)]\mathbf{P}[\mathcal
{F}(0;j_2,j)].\label{e57}
\end{align}
Using (\ref{e56}) and (\ref{e57}), there is a
$C_3(\eta,\varepsilon,\delta,\nu)>0$ such that
\begin{align}
\mathbf{P}[\mathcal {F}(0;j)\cap \mathcal {F}(n;j)]&\leq
(\mathbf{P}[\mathcal {F}(0;j_1)])^2\mathbf{P}[\mathcal
{F}(0;j_2,j)]\nonumber\\
&\leq
C_1^{-1}C_2^{-1}(M/\varepsilon)^{(j-j_1)(\gamma(\nu)+\eta)}\left\{\mathbf{P}[\mathcal
{F}(0;j)]\right\}^2\nonumber\\
&\leq C_3n^{-(\gamma(\nu)+\eta/2)}(M/\varepsilon)^
{j(\gamma(\nu)+\eta)}\left\{\mathbf{P}[\mathcal
{F}(0;j)]\right\}^2.\label{e55}
\end{align}
Combining (\ref{e88}), (\ref{e89}) and (\ref{e55}) gives the desired
result.
\end{proof}
\subsection{Proof of Theorem \ref{t1}} We will use the second moment
method to prove Theorem \ref{t1}.  The following lemma is a key
ingredient.  For $j\in\mathbb{N}$, write
$j^*=\lfloor\log_{M/\varepsilon}(2^{j-1})\rfloor$.  If $j^*\geq 1$,
let
\begin{equation*}
\quad X_j:=\sum_{n\in[2^j,2^{j+1})}\mathbf{1}_{\mathcal {F}(n;j^*)}.
\end{equation*}
\begin{lemma}\label{l17}
Fix $\nu\in [0,\nu_1)$.  For each $\eta\in(0,1-\gamma(\nu))$, there
exists $\varepsilon_0=\varepsilon_0(\eta)$, such that for each
$\varepsilon\in (0,\varepsilon_0]$, there exists
$C(\eta,\varepsilon,\delta,\nu)>0$, such that for all $j$ with
$j^*\geq 1$,
\begin{equation*}
\mathbf{P}[X_j\geq 1]\geq C,
\end{equation*}
where $\delta\rightarrow 0$ as $\varepsilon\rightarrow 0$.
\end{lemma}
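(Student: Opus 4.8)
The plan is to apply the second moment method to $X_j$. Since $X_j$ is a nonnegative integer-valued random variable (a finite sum of indicators), the Cauchy-Schwarz inequality gives
\[
\mathbf{P}[X_j\geq 1]\geq \frac{(\mathbf{E}[X_j])^2}{\mathbf{E}[X_j^2]},
\]
so it suffices to bound $\mathbf{E}[X_j]$ from below and $\mathbf{E}[X_j^2]$ from above, each in terms of $(\mathbf{E}[X_j])^2$. Throughout I take $\varepsilon_0(\eta)$ to be the smaller of the thresholds furnished by Lemmas \ref{l20} and \ref{l3}, and I use the hypothesis $\eta<1-\gamma(\nu)$, whose feasibility is exactly the content of $\nu<\nu_1$ (which forces $\gamma(\nu)<1$).

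For the first moment, translation invariance of $\mathbf{P}$ gives $\mathbf{P}[\mathcal{F}(n;j^*)]=\mathbf{P}[\mathcal{F}(0;j^*)]$ for every $n$, so $\mathbf{E}[X_j]=2^j\,\mathbf{P}[\mathcal{F}(0;j^*)]$, and Lemma \ref{l20} (with $i=1$) yields $\mathbf{P}[\mathcal{F}(0;j^*)]\geq C\varepsilon^{j^*(\gamma(\nu)+\eta)}$. The crucial bookkeeping is that $j^*=\lfloor\log_{M/\varepsilon}(2^{j-1})\rfloor$ forces $(M/\varepsilon)^{j^*}\asymp 2^{j}$, hence $\varepsilon^{j^*}\asymp 2^{-\theta j}$ with $\theta=\theta(\varepsilon):=-\log\varepsilon/\log(M/\varepsilon)\in(0,1)$; therefore $\mathbf{E}[X_j]\geq C\,2^{\,j(1-\theta(\gamma(\nu)+\eta))}$. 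Since $\theta(\gamma(\nu)+\eta)<\gamma(\nu)+\eta<1$, the exponent is strictly positive, so $\mathbf{E}[X_j]$ is bounded below by a positive constant $c=c(\eta,\varepsilon,\nu)$ over all $j$ with $j^*\geq 1$ (and in fact $\mathbf{E}[X_j]\to\infty$).

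For the second moment, write
\[
\mathbf{E}[X_j^2]=\mathbf{E}[X_j]+\sum_{n\neq n'}\mathbf{P}[\mathcal{F}(n;j^*)\cap\mathcal{F}(n';j^*)],
\]
the sum being over ordered pairs $n,n'\in[2^j,2^{j+1})$ with $n\neq n'$. The diagonal contribution $\mathbf{E}[X_j]$ is at most $c^{-1}(\mathbf{E}[X_j])^2$ by the first-moment bound. For an off-diagonal term, translation invariance reduces $\mathbf{P}[\mathcal{F}(n;j^*)\cap\mathcal{F}(n';j^*)]$ to $\mathbf{P}[\mathcal{F}(0;j^*)\cap\mathcal{F}(m;j^*)]$ with $m=|n-n'|$; since $1\leq m<2^j$ one has $\lfloor\log_{M/\varepsilon}(m/2)\rfloor\leq\lfloor\log_{M/\varepsilon}(2^{j-1})\rfloor=j^*$, so the hypothesis of Lemma \ref{l3} (applied with its $j$ equal to $j^*$ and its $n$ equal to $m$) is met and
\[
\mathbf{P}[\mathcal{F}(0;j^*)\cap\mathcal{F}(m;j^*)]\leq C\,m^{-(\gamma(\nu)+\eta)}(M/\varepsilon)^{j^*(\gamma(\nu)+\eta)}\{\mathbf{P}[\mathcal{F}(0;j^*)]\}^2.
\]
Summing over the $2^j$ values of $n$ and using $\sum_{m=1}^{2^j}m^{-(\gamma(\nu)+\eta)}\asymp 2^{\,j(1-(\gamma(\nu)+\eta))}$, which is valid precisely because $\gamma(\nu)+\eta<1$, the off-diagonal sum is at most $C\,2^{\,j(2-(\gamma(\nu)+\eta))}(M/\varepsilon)^{j^*(\gamma(\nu)+\eta)}\{\mathbf{P}[\mathcal{F}(0;j^*)]\}^2$. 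Dividing by $(\mathbf{E}[X_j])^2=2^{2j}\{\mathbf{P}[\mathcal{F}(0;j^*)]\}^2$ and invoking $(M/\varepsilon)^{j^*}\leq 2^{j-1}$ leaves a bound of order $(2^{-1})^{\gamma(\nu)+\eta}\leq 1$. Hence $\mathbf{E}[X_j^2]\leq (c^{-1}+C)(\mathbf{E}[X_j])^2$, and the Cauchy-Schwarz bound above gives $\mathbf{P}[X_j\geq 1]\geq (c^{-1}+C)^{-1}>0$, which is the claim.

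The computation is routine once Lemmas \ref{l20} and \ref{l3} are in hand; the two points that need care are (i) checking that the index hypothesis of Lemma \ref{l3} holds for every pair in the dyadic block $[2^j,2^{j+1})$ — this is exactly why that block has been taken of length $2^j$ and why $j^*=\lfloor\log_{M/\varepsilon}(2^{j-1})\rfloor$ — and (ii) verifying that the power-law decorrelation estimate of Lemma \ref{l3}, once summed over the $\asymp 4^j$ pairs, is still dominated by $(\mathbf{E}[X_j])^2$. Point (ii) is tight: it relies on both $\gamma(\nu)+\eta<1$ (so that $\sum_m m^{-(\gamma(\nu)+\eta)}$ is governed by its largest scale) and $(M/\varepsilon)^{j^*}\leq 2^{j-1}$, and I expect it to be the main obstacle.
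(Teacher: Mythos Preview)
Your proof is correct and follows essentially the same approach as the paper's: both apply the Paley--Zygmund/second-moment inequality $\mathbf{P}[X_j\geq 1]\geq(\mathbf{E}[X_j])^2/\mathbf{E}[X_j^2]$, compute $\mathbf{E}[X_j]=2^j\mathbf{P}[\mathcal{F}(0;j^*)]$ by translation invariance, and bound $\mathbf{E}[X_j^2]\leq C\,4^j\{\mathbf{P}[\mathcal{F}(0;j^*)]\}^2$ by combining Lemma~\ref{l20} for the diagonal terms with Lemma~\ref{l3} for the off-diagonal pairs. Your write-up is somewhat more explicit in verifying the index hypothesis of Lemma~\ref{l3} and in isolating where the condition $\gamma(\nu)+\eta<1$ enters, but the argument is the same.
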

\begin{proof}
It is clear that
\begin{equation*}
\mathbf{E}[X_j]=2^j\mathbf{P}[\mathcal {F}(0;j^*)].
\end{equation*}
By Lemmas \ref{l20} and \ref{l3}, for each
$\eta\in(0,1-\gamma(\nu))$, there exists
$\varepsilon_0=\varepsilon_0(\eta)$, such that for each
$\varepsilon\in (0,\varepsilon_0]$, there exist $C_1,C_2,C>0$ that
depend on $\eta,\varepsilon,\delta,\nu$, such that for all $j$ with
$j^*\geq 1$,
\begin{align*}
\mathbf{E}[X_j^2]&=\sum_{2^j\leq m<n<2^{j+1}}2\mathbf{P}[\mathcal
{F}(m;j^*)\cap \mathcal {F}(n;j^*)]+\sum_{2^j\leq
n<2^{j+1}}\mathbf{P}[\mathcal {F}(n;j^*)]\\
&\leq C_12^{(\gamma(\nu)+\eta)j}(\mathbf{P}[\mathcal
{F}(0;j^*)])^2\sum_{2^j\leq
m<n<2^{j+1}}(n-m)^{-(\gamma(\nu)+\eta)}\\
&\quad+C_22^j2^{(\gamma(\nu)+\eta)j}(\mathbf{P}[\mathcal {F}(0;j^*)])^2\\
&\leq C4^j(\mathbf{P}[\mathcal {F}(0;j^*)])^2.
\end{align*}
Then we have
\begin{equation*}
\mathbf{P}[X_j\geq 1]\geq
\frac{(\mathbf{E}[X_j])^2}{\mathbf{E}[X_j^2]}\geq C.
\end{equation*}
\end{proof}
\begin{proof}[Proof of Theorem \ref{t1}]
Fix any $\nu\in [0,\nu_1)$.  By Lemma \ref{l17}, for each
$\eta\in(0,1-\gamma(\nu))$, there exists
$\varepsilon_0=\varepsilon_0(\eta)$, such that for each
$\varepsilon\in (0,\varepsilon_0]$, there exists
$C(\eta,\varepsilon,\delta,\nu)>0$, such that for all $j$ with
$j^*\geq 2$,  with probability at least $C$ there exists $2^j\leq
n<2^{j+1}$, such that $\mathcal {F}(n;j^*)$ occurs.  Suppose that
$\mathcal {F}(n;j^*)$ holds.  For each $2\leq k\leq j^*$, since the
outermost cluster boundary loop surrounding $n$ in
$B(n,(M/\varepsilon)^{k-1})$ does not touch $\partial
B(n,(M/\varepsilon)^{k-1})$ and there exists a blue path touching
both $\mathcal {C}^-(n;k)$ and $\mathcal {C}^+(n;k-1)$, we know that
$\mathcal {C}^+(n;k-1)$ is the outermost yellow circuit surrounding
$n$ inside $\mathcal {C}^-(n;k)$.   Therefore, all the $i$th's
outermost yellow circuits surrounding $n$ in
$B(n,(M/\varepsilon)^{j^*})$ from outside to inside are $\mathcal
{C}^+(n;j^{*}),\ldots,\mathcal {C}^-(n;j^{*}),\mathcal
{C}^+(n;j^{*}-1),\ldots,\mathcal {C}^-(n;2),\mathcal
{C}^+(n;1),\ldots,\mathcal {C}^-(n;1),\ldots$, where $\mathcal
{C}^-(n;1)\subset A(1,M)$, and the number of circuits in the
subsequence $\mathcal {C}^+(n;k),\ldots,\mathcal {C}^-(n;k)$ is in
$[\nu\log(1/\varepsilon),(\nu+\delta)\log(1/\varepsilon)]$ for each
$1\leq k\leq j^*$.   Then by item (i) of Proposition \ref{p3}, we
have
\begin{equation*}
j^*\nu\log(1/\varepsilon)\leq T(n,\partial
B(n,(M/\varepsilon)^{j*}))\leq j^*(\nu+\delta)\log(1/\varepsilon)+M,
\end{equation*}
where $j^*=\lfloor\log_{M/\varepsilon}(2^{j-1})\rfloor$ and
$\delta\rightarrow 0$ as $\varepsilon\rightarrow 0$.  Denote the
above event by $\mathcal {B}_j$.  Since the events $\{\mathcal
{B}_{3j}\}_j$ are independent and
$\sum_{j=1}^{\infty}\mathbf{P}[\mathcal {B}_{3j}]=\infty$, the
Borel-Cantelli lemma implies that almost surely infinitely many of
the events $\{\mathcal {B}_{3j}\}_j$ occur.  So almost surely there
exists a random subsequence $\{m_i:i\geq 1\}$ with $2^{j_i}\leq
m_i<2^{j_i+1}$ and $j_1<j_2<\cdots$, such that
\begin{equation}\label{e52}
j_i^*\nu\log(1/\varepsilon)\leq T(m_i,\partial
B(m_i,(M/\varepsilon)^{j_i*}))\leq
j_i^*(\nu+\delta)\log(1/\varepsilon)+M.
\end{equation}

Define
\begin{align*}
Y_j:=&\mbox{the maximal number of disjoint yellow circuits
intersecting $[0,2^{j+2}]$ with}\\
&\mbox{Euclidean diameters greater than $\varepsilon2^{j-1}/M$}.
\end{align*}
Observe that for $2^j\leq m<2^{j+1}$, the maximal number of disjoint
yellow circuits surrounding either 0 or $m$ and intersecting
$\mathbb{C}\backslash(B(2^{j-1})\cup B(m,2^{j^*}))$ is smaller than
or equal to $Y_j$.  From this fact we obtain that
\begin{equation}\label{e90}
T(0,\partial B(2^{j-1}))+T(m,\partial B(m,2^{j^*}))\leq a_{0,m}\leq
T(0,\partial B(2^{j-1}))+T(m,\partial B(m,2^{j^*}))+Y_{j}.
\end{equation}

By RSW and BK inequality, there exists $C_1>0$ depending on
$\varepsilon$ and $M$ but independent of $j$, such that
\begin{equation*}
\mathbf{P}[Y_j\geq \sqrt{j}]\leq\exp(-C_1\sqrt{j}).
\end{equation*}
Therefore, $\sum_{j=1}^{\infty}\mathbf{P}[Y_j\geq \sqrt{j}]<\infty$.
Then the Borel-Cantelli lemma implies that almost surely
\begin{equation}\label{e53}
Y_j\leq \sqrt{j}~~\mbox{ for all large $j$}.
\end{equation}
By (\ref{t41}), given any fixed small $\delta>0$, almost surely
\begin{equation}\label{e54}
\frac{1}{2\sqrt{3}\pi}-\delta\leq \frac{T(0,\partial B(2^j))}{\log
2^j}\leq\frac{1}{2\sqrt{3}\pi}+\delta~~\mbox{ for all large $j$}.
\end{equation}
Combining (\ref{e52}), (\ref{e90}) (\ref{e53}) and (\ref{e54}) gives
that, for all large $m_i$, we have
\begin{align*}
&a_{0,m_i}\geq\left(\frac{1}{2\sqrt{3}\pi}-\delta\right)\log
2^{j_i-1}+j_i^*\nu\log(1/\varepsilon),\\
&a_{0,m_i}\leq\left(\frac{1}{2\sqrt{3}\pi}+\delta\right)\log
2^{j_i-1}+j_i^*(\nu+\delta)\log(1/\varepsilon)+\sqrt{j_i}+M.
\end{align*}
Therefore, by choosing $\varepsilon$ and $\delta(\varepsilon)$
sufficiently small, we know that for each fixed $\delta'>0$, almost
surely there exists a random subsequence $\{m_i:i\geq 1\}$ such that
\begin{equation*}
\left(\frac{1}{2\sqrt{3}\pi}+\nu-\delta'\right)\log m_i\leq
a_{0,m_i}\leq \left(\frac{1}{2\sqrt{3}\pi}+\nu+\delta'\right)\log
m_i.
\end{equation*}
This implies that for any fixed $\nu\in [0,\nu_1)$, almost surely
there exists a random subsequence $\{n_i:i\geq 1\}$ depending on
$\nu$, such that
\begin{equation*}
\lim_{i\rightarrow\infty}\frac{a_{0,n_i}}{\log
n_i}=\frac{1}{2\sqrt{3}\pi}+\nu.
\end{equation*}
Therefore, almost surely for all rational $\nu\in [0,\nu_1)$, there
exists a corresponding random subsequence with respect to $\nu$ as
above simultaneously, which gives the desired result for all
$\nu\in[0,\nu_1]$ immediately.
\end{proof}

\section{Cluster graph}
In this section, we give the proof of Proposition \ref{p5} and
Theorem \ref{t6}.  We write $\mathbf{P}$ for $\mathbf{P}_{1/2}$
throughout this section.

\subsection{Double circuit}
To study cluster graph, we need the notion of double circuit
introduced in \cite{WA87}, which was used by Wierman and Appel to
show that there is almost surely an infinite AB percolation cluster
on $\mathbb{T}$ for an interval of parameter values centered at
$1/2$.  A \textbf{double circuit} is a pair of disjoint circuits
$\mathcal {C},\mathcal {C}'$, such that $\mathcal {C}$ is surrounded
by $\mathcal {C}'$, and each site in $\mathcal {C}$ has a neighbor
site in $\mathcal {C}'$, and each site in $\mathcal {C}'$ has a
neighbor site in $\mathcal {C}$.  We need some additional notation.

If $W$ is a set of sites, then its internal site boundary is
\begin{equation*}
\Delta^{in} W:=\{v:v\in W\mbox{ and $v$ is adjacent to
$\mathbb{V}\backslash W$}\}.
\end{equation*}
Note that $\Delta^{in} W=\Delta(\mathbb{V}\backslash W)$.  The
exterior site boundary of $W$ is
\begin{align*}
\Delta^{\infty} W:=&\{v\in \Delta W:\mbox{there exists a path
$\gamma$ on $\mathbb{T}$ from $v$ to $\infty$ such that}\\
&\mbox{ the only site of $\gamma$ in $W\cup\Delta W$ is $v$}\}.
\end{align*}
It is well known that if $W$ is a finite and connected set, then
$\Delta^{\infty} W$ is a circuit.  The following two observations
for double circuit are elementary, and we omit the proof here.
\begin{itemize}
\item For a double circuit, there exist no sites between its two
circuits.
\item Suppose that $\mathcal {C}$ is a circuit. Then both $\Delta^{\infty}\mathcal {C}$ and $\Delta^{in}\{\mbox{interior of }\Delta^{\infty}\mathcal {C}\}$ are
circuits, and they form a double circuit.
\end{itemize}

Proposition \ref{p6} below states two combinatorial properties of
double circuit, which will be used in the proof of Proposition
\ref{p5}. The first property was essentially proved in \cite{WA87};
an analogue of the second one for AB percolation also appeared in
\cite{WA87}.  Therefore, we just sketch the proof here and omit some
details.  To state the result, we denote by
$\widetilde{\mathbb{Z}^2}$ the parallelogrammic lattice derived from
$\mathbb{T}$ by deleting the bonds parallel to the vector
$e^{2i\pi/3}$.
\begin{proposition}\label{p6}
Double circuit in $\mathbb{T}$ satisfies the following properties.
\begin{itemize}
\item[(i)] For a double circuit composed of circuits $\mathcal {C}$ and $\mathcal
{C}'$, there exists a circuit $\widetilde{\mathcal {C}}$ in
$\widetilde{\mathbb{Z}^2}$ such that $\mathcal
{C}\subset\widetilde{\mathcal {C}}\subset\mathcal {C}\cup\mathcal
{C}'$.
\item[(ii)] A cluster $\mathcal {C}$ belongs to a finite component of cluster graph
if and only if $\mathcal {C}$ is surrounded by a closed double
circuit.
\end{itemize}
\end{proposition}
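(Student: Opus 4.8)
The plan is to prove the two parts by direct combinatorial constructions; no probability is needed beyond the classical fact that at $p=1/2$ there is almost surely no infinite open cluster, and the whole argument is a matter of planar bookkeeping in the spirit of \cite{WA87}.

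For part (i) I would build $\widetilde{\mathcal {C}}$ by walking along the inner circuit $\mathcal {C}$ and locally repairing the bonds it cannot keep. Every bond of $\mathcal {C}$ is parallel to one of the three lattice axes $1$, $e^{i\pi/3}$, $e^{2i\pi/3}$; the first two are bonds of $\widetilde{\mathbb{Z}^2}$ and are kept. If $\{u,u'\}$ is a bond of $\mathcal {C}$ parallel to $e^{2i\pi/3}$, look at the two triangular faces of $\mathbb{T}$ flanking it, with third vertices $a$ (on the side of $\mathcal {C}$ facing $\mathcal {C}'$) and $b$ (on the other side). In the triangle $\{u,u',a\}$ the side $\{u,u'\}$ is parallel to $e^{2i\pi/3}$, so the other two sides $\{u,a\}$ and $\{a,u'\}$ are parallel to the two kept axes, whence $u\to a\to u'$ is a path in $\widetilde{\mathbb{Z}^2}$. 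Moreover $a\in\mathcal {C}\cup\mathcal {C}'$: it lies on $\mathcal {C}$ or strictly outside $\mathcal {C}$, and in the latter case it must lie on $\mathcal {C}'$, since there is no site strictly between the two circuits, while a bond joining $u$ (which lies strictly inside $\mathcal {C}'$) to a site strictly outside $\mathcal {C}'$ is impossible in the crossing-free embedding of $\mathbb{T}$. Replacing each forbidden bond of $\mathcal {C}$ by such a two-step detour produces a closed walk in $\widetilde{\mathbb{Z}^2}$ that is contained in $\mathcal {C}\cup\mathcal {C}'$, passes through every site of $\mathcal {C}$, and is homotopic to $\mathcal {C}$; pruning it to a simple circuit that still contains all of $\mathcal {C}$ is the routine final step.

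For the ``if'' direction of (ii), suppose $\mathcal {C}$ is surrounded by a closed double circuit $(\mathcal {D},\mathcal {D}')$ with $\mathcal {D}$ inner. I would show by induction on the cluster-graph distance from $\mathcal {C}$ that every open cluster in its component lies strictly inside $\mathcal {D}$. Let $A$ be such a cluster and $B$ a cluster-graph neighbour of $A$, so some closed hexagon $h$ is adjacent to a site of $A$ and to a site of $B$. Being adjacent to a site strictly inside $\mathcal {D}$, $h$ lies strictly inside $\mathcal {D}$ or on $\mathcal {D}$. If it lies strictly inside, so does its neighbour in $B$, hence $B$. If $h$ lies on $\mathcal {D}$, its neighbour in $B$ cannot lie strictly outside $\mathcal {D}$: it would then lie strictly outside $\mathcal {D}'$ as well (no site being strictly between $\mathcal {D}$ and $\mathcal {D}'$, and $\mathcal {D}'$ being closed hence disjoint from the open cluster $B$), but a bond from the point $h\in\mathcal {D}$, which lies strictly inside $\mathcal {D}'$, to a site strictly outside $\mathcal {D}'$ cannot exist in the crossing-free embedding of $\mathbb{T}$; so $B$ again lies strictly inside $\mathcal {D}$. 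Only finitely many sites lie inside the circuit $\mathcal {D}$, so the component of $\mathcal {C}$ is finite. For the ``only if'' direction, suppose $\mathcal {C}$ lies in a finite component $\mathcal {K}$ of the cluster graph. Since there is almost surely no infinite open cluster at $p=1/2$, the union $W$ of the open clusters in $\mathcal {K}$ is a finite set of open sites. I would fatten it by one layer, $W^{+}:=W\cup\Delta W$. Then $\Delta W$ consists only of closed sites, and, crucially, $\Delta W^{+}$ does too: an open site adjacent to $W^{+}$ is adjacent either to $W$, hence in the same open cluster and so in $W$, or to a closed hexagon of $\Delta W$, which is also adjacent to $W$ and so witnesses a cluster-graph edge joining that site's cluster to $\mathcal {K}$, again forcing the site into $W$. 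Thus the finite connected set $W^{+}$ has a ``doubly closed'' boundary, namely the two consecutive all-closed layers $\Delta W$ and $\Delta W^{+}$ enclosing $W\supseteq\mathcal {C}$. Filling in the holes of $W^{+}$ and applying the observations on double circuits stated above to these two closed layers yields a closed double circuit surrounding $\mathcal {C}$.

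The main obstacle is exactly what looks routine: in part (i), that the repaired closed walk really can be pruned to a \emph{simple} circuit still containing every site of $\mathcal {C}$; and in the ``only if'' part of (ii), that from the two consecutive closed boundary layers of the filled-in $W^{+}$ one genuinely obtains two disjoint simple circuits, one surrounded by the other with no site between them — a bona fide closed double circuit. These are precisely the planar-topology verifications carried out in \cite{WA87}, where the details omitted here can be found.
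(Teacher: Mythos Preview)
Your approach is essentially the paper's, and you have correctly flagged the two places where planar bookkeeping is needed. For part~(i), however, the paper sidesteps your pruning obstacle entirely by a sharper claim that you should make: the detour vertex $a$ lies in $\mathcal{C}'$, not merely in $\mathcal{C}\cup\mathcal{C}'$. The argument is a brief case analysis. Suppose neither common neighbour of the bond $(u_i,u_{i+1})$ lies in $\mathcal{C}'$; then either (1)~neither lies in $\mathcal{C}\cup\mathcal{C}'$, or (2)~exactly one lies in $\mathcal{C}$ and the other in neither, or (3)~both lie in $\mathcal{C}$, and each of these contradicts the double-circuit axioms. Since the inner neighbour $b$ lies on or inside $\mathcal{C}$ and hence not on the disjoint outer circuit $\mathcal{C}'$, the neighbour in $\mathcal{C}'$ must be your $a$. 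Now the augmented walk consists of the vertices of $\mathcal{C}$ (each visited once, in order) with vertices of $\mathcal{C}'$ inserted between some consecutive pairs; it can fail to be simple only if some $a\in\mathcal{C}'$ is inserted twice, which would give $a$ four neighbours $u_j,u_{j+1},u_k,u_{k+1}$ in $\mathcal{C}$ arranged as two edges parallel to $e^{2i\pi/3}$, forcing one of those edges to be surrounded by $\mathcal{C}'$ and hence $\mathcal{C}\cap\mathcal{C}'\neq\emptyset$. So the walk is already a simple circuit containing all of $\mathcal{C}$, and no pruning is needed.

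For the ``only if'' direction of part~(ii), your construction via $W^{+}=W\cup\Delta W$ is the paper's argument in slightly different dress: the paper forms $G_0=\bigcup_i\overline{\Delta^{\infty}\mathcal{C}_i}$, takes the closed circuit $\mathcal{C}:=\Delta^{in}G_0$, checks that $\Delta^{\infty}\mathcal{C}$ is also closed (each of its sites being at graph distance two from some cluster of the component, so an open such site would witness an extra cluster-graph edge), and then invokes the second observation on double circuits with this $\mathcal{C}$. Your two closed layers $\Delta W$ and $\Delta W^{+}$ encode exactly the same data, and the assembly into a bona fide closed double circuit is precisely the content of that observation once one has exhibited a single closed circuit, surrounding the component, whose exterior site boundary is again closed.
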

\begin{proof}
We first show item (i).  Suppose that $\mathcal
{C}=(u_1,u_2,\ldots,u_m)$ and $\mathcal {C}'=(v_1,v_2,\ldots,v_n)$.
We construct the circuit $\widetilde{\mathcal {C}}$ as follows.  Let
$1\leq i\leq m$.  We claim that for any bond $(u_i,u_{i+1})$ (let
$u_{m+1}:=u_1$ if $i=m$) parallel to the vector $e^{2i\pi/3}$, there
exists a site $v_i^*$ in $\{v_1,\ldots,v_n\}$ that is adjacent to
both $u_i$ and $u_{i+1}$ by bonds parallel to vectors $1$ or
$e^{i\pi/3}$.  Suppose for a contradiction that this is not the
case.  Then three cases may occur:
\begin{enumerate}
\item neither common neighbor site of $\{u_i,u_{i+1}\}$ belongs to
$\mathcal {C}\cup\mathcal {C}'$;
\item one common neighbor site belongs to $\mathcal {C}$, while the
other does not belong to $\mathcal {C}\cup\mathcal {C}'$;
\item both the common neighbor sites belong to $\mathcal {C}$.
\end{enumerate}
One can check that each case above will lead to a contradiction with
the definition of double circuit, which gives our claim.  We insert
$v_i^*$ between $u_i$ and $u_{i+1}$ for each $(u_i,u_{i+1})$
parallel to the vector $e^{2i\pi/3}$ and replace $(u_i,u_{i+1})$
with $(u_i,v_i^*,u_{i+1})$.  Then we get the desired circuit
$\widetilde{\mathcal {C}}$, since a site can not be inserted more
than once.  If not, suppose that $v_i$ is inserted more than once.
Then there exist four sites $u_j,u_{j+1},u_k,u_{k+1}$ in $\mathcal
{C}$, such that they are adjacent to $v_i$, with the bonds
$(u_j,u_{j+1})$ and $(u_k,u_{k+1})$ parallel to vector
$e^{2i\pi/3}$.  Therefore, either $(u_j,u_{j+1})$ or $(u_k,u_{k+1})$
is surrounded by $\mathcal {C}'$.  This leads to $\mathcal
{C}\cap\mathcal {C}'\neq \emptyset$, which is a contradiction.

Now let us show item (ii).  It is obvious that if an open cluster is
surrounded by a closed double circuit, then it belongs to a finite
component of cluster graph. Conversely, it remains to show that
given any finite component of cluster graph composed of open
clusters $\mathcal {C}_1,\ldots,\mathcal {C}_n$, we can find a
closed double circuit surrounding all these clusters.  Note that
$\Delta^{\infty}\mathcal {C}_1,\ldots,\Delta^{\infty}\mathcal {C}_n$
are closed circuits. Let
$G_0=\bigcup_{i=1}^{n}\overline{\Delta^{\infty}\mathcal {C}_i}$.  It
is easy to show that $G_0$ is simply connected and has no
``bottlenecks" (cut vertices).  So $\mathcal {C}:=\Delta^{in}G_0$ is
a circuit.  It is clear that $\mathcal {C}$ is closed and $\mathcal
{C}$ surrounds $\mathcal {C}_1,\ldots,\mathcal {C}_n$.  Furthermore,
the circuit $\Delta^{\infty}\mathcal {C}$ is also closed, since each
site $v$ in $\Delta^{\infty}\mathcal {C}$ has a neighbor in
$\mathcal {C}$ and thus the graph distance between $v$ and some
$\mathcal {C}_i$ equals two.  Then the second observation just above
Proposition \ref{p6} implies that $\Delta^{\infty}\mathcal {C}$ and
$\Delta^{in}\{\mbox{interior of }\Delta^{\infty}\mathcal {C}\}$ form
a closed double circuit surrounding $\mathcal {C}_1,\ldots,\mathcal
{C}_n$.
\end{proof}

\subsection{Proof of Proposition \ref{p5}}
\begin{proof}[Proof of Proposition \ref{p5}]
For $k,n\in\mathbb{N}$, define the events
\begin{align*}
&\mathcal {F}(k):=\{\mbox{$\exists$ a closed double circuit
surrounding 0, with
Euclidean diameter larger than $k$}\},\\
&\widetilde{\mathcal {A}}(n;k):=\{\mbox{$\exists$ a closed path
connecting site $n$ to $\partial B(n,k)$ on
$\widetilde{\mathbb{Z}^2}$}\}.
\end{align*}
Note that a closed double circuit surrounding 0 must intersect some
site $n\in\mathbb{N}$.  Then by Proposition \ref{p6}, there exists a
closed circuit on $\widetilde{\mathbb{Z}^2}$ which surrounds 0 and
passes through $n$.

Denote by $\widetilde{\mathbf{P}}$ the probability measure for site
percolation on $\widetilde{\mathbb{Z}^2}$ with parameter $1/2$.  It
is well known that the critical probability of site percolation on
$\mathbb{Z}^2$, denoted by $p_c^{site}(\mathbb{Z}^2)$, is strictly
greater than that of bond percolation on $\mathbb{Z}^2$, denoted by
$p_c^{bond}(\mathbb{Z}^2)$ (see Theorem 3.28 in \cite{Gri99}).  Then
Kesten's result that $p_c^{bond}(\mathbb{Z}^2)=1/2$ (see e.g.
Theorem 11.11 in \cite{Gri99}) implies
$p_c^{site}(\mathbb{Z}^2)>1/2$.  Combining the above argument with
the exponential decay of the radius distribution result for
subcritical site percolation on $\mathbb{Z}^2$ (see e.g. Theorems 7
and 9 in \cite{BR06}), there exist $C_1,C_2>0$, such that for any
$k\geq 1$,
\begin{equation*}
\mathbf{P}[\mathcal {F}(k)]
\leq\sum_{n=1}^k\widetilde{\mathbf{P}}[\widetilde{\mathcal
{A}}(n;k)]+\sum_{n=k+1}^{\infty}\widetilde{\mathbf{P}}[\widetilde{\mathcal
{A}}(n;n)]\leq C_1k\exp(-C_2k).
\end{equation*}
From this one easily obtains that there exists $C_3>0$ such that for
any $k\in \mathbb{N}$,
\begin{equation}\label{e60}
\mathbf{P}[\mathcal {F}(k)]\leq\exp(-C_3k),
\end{equation}
which implies $\sum_{k=1}^{\infty}\mathbf{P}[\mathcal
{F}(k)]<\infty$.  By the Borel-Cantelli lemma, almost surely only
finitely many of the events $\mathcal {F}(k)$'s occur.  So almost
surely there is a random $K>0$, such that there exist no closed
double circuits surrounding $B(K)$.  Since there are infinitely many
open clusters surrounding $B(K)$ almost surely, by Proposition
\ref{p6}, they must belong to the unique infinite component
$\mathscr{C}$ of the cluster graph.

Now let us show (\ref{e91}) for cluster graph.  For
$k\in\mathbb{N}$, write
\begin{equation*}
\mathcal {E}(k):=\{\mbox{all the sites in $B(k)$ are open}\}.
\end{equation*}
Then we have
\begin{align*}
&\mathbf{P}[\mbox{there exist no closed double circuits surrounding
$B(k)$}]\\
&=\mathbf{P}[\mbox{there exist no closed double circuits surrounding
$B(k)$}|\mathcal {E}(k)]\\
&=\mathbf{P}[\mbox{the cluster
containing $B(k)$ belongs to $\mathscr{C}$}|\mathcal {E}(k)]\quad\mbox{by Proposition \ref{p6}}\\
&=\mathbf{P}[\mbox{there is a path from $\partial B(k)$ to $\infty$ without two consecutive sites being closed}|\mathcal {E}(k)]\\
&\leq \mathbf{P}[\mathscr{C}\cap B(k+2)\neq \emptyset].
\end{align*}
Combining the above inequality and (\ref{e60}), for all $k\geq 3$ we
get
\begin{align*}
\mathbf{P}[\dist(0,\mathscr{C})\geq
k]&\leq\mathbf{P}[\mbox{$\exists$ a closed double circuit
surrounding $B(k-2)$}]\\
&\leq \mathbf{P}[\mathcal {F}(k-1)]\leq \exp(-C_3(k-1)).
\end{align*}
From this we derive (\ref{e91}) for cluster graph easily.
\end{proof}
\subsection{Proof of Theorem \ref{t6}}
Before giving the proof of Theorem \ref{t6}, we need Proposition
\ref{p7} below on the geodesics of critical Bernoulli FPP.  We start
with the following definitions.

An infinite path $\gamma$ is called an \textbf{infinite geodesic} if
every subpath of $\gamma$ is a finite geodesic.  For a pair of
neighboring closed sites, if there exists an infinite geodesic from
0 to $\infty$ passing through both of them, we call them \textbf{bad
sites}.

Let us note that Proposition \ref{p7} allows us to derive that
cluster graph has a unique infinite component almost surely, which
has been proved in the last section, and allows us to get a
polynomially small upper bound for
$\mathbf{P}[\dist(0,\mathscr{C})\geq k]$, which is weaker than
(\ref{e91}).
\begin{proposition}\label{p7}
Consider critical Bernoulli FPP on $\mathbb{T}$.  The following
properties are valid with probability one.
\begin{itemize}
\item[(i)]  There exists an infinite geodesic from 0 to $\infty$.
Moreover, there exists a sequence of disjoint closed circuits
surrounding 0, such that for any infinite geodesic $\gamma$ starting
from 0, each closed site in $\gamma$ except 0 is in one of these
circuits, with different closed sites lying in different circuits.
\item[(ii)] The number of bad sites is finite.
\end{itemize}
\end{proposition}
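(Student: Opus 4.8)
The plan is to prove (i) first --- both the existence of an infinite geodesic and the circuit statement --- and then to deduce (ii) from (i) together with the six-arm bound (\ref{e67}); Theorem \ref{t4} is not needed. For the existence of an infinite geodesic I would argue by compactness: for each $n$ choose a path $\gamma_n$ from $0$ to $\partial B(n)$ that stays inside $B(n)$ and realizes $T(0,\partial B(n))$; since passage times are nonnegative, $\gamma_n$ is then a geodesic from $0$ to its endpoint, and every initial segment of $\gamma_n$ is a geodesic. The $\gamma_n$ are self-avoiding paths through $0$ in the locally finite graph $\mathbb{T}$, so by iterated pigeonhole (König's lemma) there is an infinite self-avoiding path $\gamma$ from $0$ each of whose finite initial segments coincides with an initial segment of $\gamma_n$ for infinitely many $n$; hence every finite subpath of $\gamma$ is a geodesic, so $\gamma$ is an infinite geodesic, and it is unbounded because a self-avoiding path in a locally finite graph leaves every ball.

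For the circuit statement I would use the blue circuits to cut the plane into annular pieces. Almost surely there are infinitely many pairwise disjoint blue circuits surrounding $0$; list them from inside out as $\mathcal{B}_1\prec\mathcal{B}_2\prec\cdots$, let $D_0$ be the region strictly enclosed by $\mathcal{B}_1$ and $\overline A_k$ the closed annular region between $\mathcal{B}_k$ and $\mathcal{B}_{k+1}$. Inside each $\overline A_k$ the argument of Proposition \ref{p3}(ii) (the shape of the annulus being immaterial) produces $\rho_k$ disjoint yellow circuits $\mathcal{Y}_{k,1}\prec\cdots\prec\mathcal{Y}_{k,\rho_k}$ surrounding $0$ such that any geodesic crossing $\overline A_k$ from $\mathcal{B}_k$ to $\mathcal{B}_{k+1}$ has its closed sites among them, distinct closed sites on distinct circuits; here $\rho_k$ is the number of disjoint yellow circuits surrounding $0$ in $A_k$, which by the analogue of Proposition \ref{p3}(i) equals the passage time across $\overline A_k$ between its boundary circuits. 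I would do the same in $\overline{D_0}$ for geodesics from $0$ to $\mathcal{B}_1$; if $D_0$ contains no yellow circuit surrounding $0$, self-duality of critical site percolation on $\mathbb{T}$ gives a blue, hence zero-cost, path from $0$ to $\mathcal{B}_1$, so $T(0,w)=0$ for every $w\in\mathcal{B}_1$. The required sequence is the union of all the $\mathcal{Y}_{k,m}$ and $\mathcal{Y}_{0,m}$; they are pairwise disjoint since circuits from distinct pieces are separated by a blue $\mathcal{B}_j$. To see they collect all closed sites, fix an infinite geodesic $\gamma$ and a closed $v\neq 0$ on it; $v$ lies strictly inside a unique piece (it cannot lie on a blue $\mathcal{B}_j$), and I would take the maximal block $\gamma_{[a',b']}$ around $v$ staying in the closure of that piece. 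Its endpoints lie on the bounding blue circuits (or $\gamma_{a'}=0$), and if both lie on the same circuit then a zero-cost detour along it forces $T(\gamma_{[a',b']})=0$, contradicting $t(v)=1$; so this block is a genuine crossing, and splicing it with arcs of the bounding blue circuits shows it is a geodesic crossing of the piece, whence $v$ lies on one of the $\mathcal{Y}$'s. Finally, a rerouting along $\mathcal{B}_k$ (which would save $\rho_k\geq 1$ units of passage time) shows $\gamma$ makes only one closed-site-carrying crossing of each piece, which gives injectivity; when $\rho_k=0$ there are no closed sites of $\gamma$ in $A_k$.

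To deduce (ii), let $(u,v)$ be a bad pair, witnessed by an infinite geodesic $\gamma$ passing through $u$ and $v$ consecutively, both closed. By (i) the closed sites of $\gamma$ lie on distinct circuits $\mathcal{Y}_i$, and since $\gamma$ steps from $u$ straight to $v$ while a circuit can be crossed only through its own hexagons, $u$ and $v$ lie on consecutive circuits, say $u\in\mathcal{Y}_i\prec\mathcal{Y}_{i+1}\ni v$. Because a circuit surrounding $0$ through a point at distance $r$ from it cannot remain inside a ball of radius $<r$ about that point, each of $\mathcal{Y}_i,\mathcal{Y}_{i+1}$ contributes two disjoint yellow arms emanating from the blob $\{u,v\}$ out to radius $\asymp|u|$; together with the two blue half-geodesics of $\gamma$ (one toward $\infty$, one toward $0$) these yield, in the right cyclic order, a polychromatic six-arm event of type $\mathcal{A}_6=\mathcal{A}_{(011011)}$ centered near $u$, from an $O(1)$ scale up to scale $\asymp|u|$ --- unless the closed site of $\gamma$ just before $u$ or just after $v$ lies within a small fixed fraction of $|u|$ from $u$, in which case it sits on $\mathcal{Y}_{i-1}$ or $\mathcal{Y}_{i+2}$ and supplies two more yellow arms, making the event only more demanding. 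Summing probabilities over the $\asymp 4^k$ positions of $u$ in $A(2^k,2^{k+1})$ and invoking (\ref{e67}), $\sum_k 4^k\cdot C\,2^{-(2+\varepsilon)k}<\infty$, so by Borel--Cantelli only finitely many bad pairs, hence only finitely many bad sites, occur almost surely.

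The step I expect to be the main obstacle is the six-arm count in (ii): one must check that six arms of the prescribed colours are genuinely present in the correct cyclic order from a bounded scale out to a fixed fraction of $|u|$ --- in particular the arm pointing toward the origin --- which is precisely why the ``consecutive circuits'' conclusion of (i) and the case split on the position of the neighbouring closed site are both needed. The topological bookkeeping in (i) --- handling excursions of $\gamma$ across the blue circuits, and upgrading Proposition \ref{p3} from round annuli to annuli bounded by blue circuits --- is routine but must be done with some care.
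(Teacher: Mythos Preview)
Your approach to (i) is essentially the paper's: both decompose the plane by the successive open clusters (your blue circuits $\mathcal B_k$) surrounding $0$, apply the analogue of Proposition~\ref{p3}(ii) in each annular piece to produce the yellow circuits, and observe that any infinite geodesic decomposes into geodesic crossings of those pieces. You supply more detail (K\"onig's lemma for existence, the rerouting arguments for the one-crossing-per-piece statement) than the paper, which simply writes $\gamma=0\gamma_{0,1}\gamma_1\gamma_{1,2}\gamma_2\cdots$ and cites Proposition~\ref{p3}(ii), but the substance is identical.

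For (ii) your strategy matches the paper's, but your case split leaves a gap. In your ``unless'' case, the two extra yellow arms from $\mathcal Y_{i-1}$ or $\mathcal Y_{i+2}$ emanate from the nearby closed site $w$ at some distance $r$ from $u$, not from $\{u,v\}$; they cross $A(u;r,c|u|)$ but not $A(u;O(1),c|u|)$. So one does \emph{not} obtain $\mathcal A_6(u;O(1),c|u|)$ in the full annulus --- there only five monochromatic arms are guaranteed (the four yellow ones from $\mathcal Y_i,\mathcal Y_{i+1}$ and at most one full-length blue geodesic half). What one actually gets is $\mathcal A_6(v;2,r)\cap\mathcal A_{(111111)}(v;r,2^K)$ for a \emph{random} transition scale $r$, and ``only more demanding'' is not a substitute for the required union over $r$. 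This is exactly how the paper proceeds: it sets $\mathcal F_v:=\{\exists\,r\le 2^K:\mathcal A_6(v;2,r)\cap\mathcal A_{(111111)}(v;r,2^K)\}$, shows $\{v\text{ bad}\}\subset\mathcal F_v$, bounds $\mathbf P[\mathcal F_v]$ by a dyadic sum over $r$, and invokes Lemma~\ref{l4} (the monochromatic six-arm probability is a factor $(r/2^K)^{\varepsilon}$ smaller than the polychromatic one) to make that sum geometric and recover $\mathbf P[\mathcal F_v]\le C\,\mathbf P[\mathcal A_6(1,|v|)]\le C|v|^{-2-\varepsilon}$. The union over the transition scale is the step your outline is missing.
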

\begin{proof}
RSW implies that almost surely there are infinitely many open
clusters surrounding 0, denoted by $\mathcal {C}_1,\mathcal
{C}_2,\ldots$ from inside to outside.  Then it is clear that almost
surely, there exists an infinite geodesic from 0 to $\infty$, and
each infinite geodesic $\gamma$ starting from 0 can be represented
by $0\gamma_{0,1}\gamma_1\gamma_{1,2}\gamma_2\ldots$, where for
$i\geq 1$, $\gamma_i$ is a path in $\mathcal {C}_i$ and
$\gamma_{i,i+1}$ is a geodesic between $\mathcal {C}_i$ to $\mathcal
{C}_{i+1}$, and $\gamma_{0,1}$ is a geodesic between 0 and $\mathcal
{C}_1$.  Similarly to item (ii) in Proposition \ref{p3}, for each
$i\geq 1$, there exist $T(\mathcal {C}_i,\mathcal {C}_{i+1})$
disjoint closed circuits surrounding 0 between $\mathcal {C}_i$ and
$\mathcal {C}_{i+1}$, such that each closed site in $\gamma_{i,i+1}$
is in one of these circuits, with different closed sites lying in
different circuits; there exist $T(0,\mathcal {C}_1)-t(0)$ disjoint
closed circuits surrounding 0 between 0 and $\mathcal {C}_1$, such
that each closed site in $\gamma_{0,1}$ is in one of these circuits,
with different closed sites lying in different circuits.  Then the
first item follows immediately.

Let us turn to the proof of the item (ii).  For each site
$v\in\mathbb{V}$ with $|v|\geq 4$, we let
$K=K(v)=\lfloor\log_2|v|\rfloor$, and define the event
\begin{equation*}
\mathcal {F}_v:=\{\exists~r\mbox{ such that $2\leq r\leq 2^K$ and
$\mathcal {A}_6(v;2,r)\cap\mathcal {A}_{(111111)}(v;r,2^K)$
occurs}\}.
\end{equation*}
Note that $\mathcal {A}_6(v;2,2^K)\subset\mathcal {F}_v$ since we
have set $\mathcal {A}_{(111111)}(v;2^K,2^K)=\Omega$.  Assume that
$|v|\geq 4$.  It is easy to see by item (i) in Proposition \ref{p7}
that if $v$ is a bad site, then $\mathcal {F}_v$ occurs.  Suppose
that the event $\mathcal {F}_v$ holds.   By considering the smallest
$r$ satisfying $\mathcal {F}_v$ with $2^i\leq r\leq 2^{i+1}$, there
exist universal constants $\varepsilon_0,C_1,C_2,C_3>0$ such that
\begin{align*}
\mathbf{P}[\mathcal {F}_v]&\leq \sum_{i=1}^{K-1}\mathbf{P}[\mathcal
{A}_6(2,2^i)]\mathbf{P}[\mathcal {A}_{(111111)}(2^{i+1},2^K)]\\
&\leq \sum_{i=1}^{K-1}C_1\mathbf{P}[\mathcal
{A}_6(2,2^i)]\mathbf{P}[\mathcal {A}_6(2^{i+1},2^K)]2^{-\varepsilon_0(K-i-1)}~~\mbox{by Lemma \ref{l4}}\\
&\leq \sum_{i=1}^{K-1}C_2\mathbf{P}[\mathcal
{A}_6(2,2^K)]2^{-\varepsilon_0(K-i-1)}~~\mbox{ by
quasi-multiplicativity}\\
&\leq C_3\mathbf{P}[\mathcal {A}_6(1,|v|)]~~\mbox{by extendability.}
\end{align*}
This together with (\ref{e67}) implies that, there exist
$\varepsilon,C>0$, such that for all sites $v$ with $|v|\geq 4$,
\begin{equation*}
\mathbf{P}[\mbox{$v$ is a bad site}]\leq \mathbf{P}[\mathcal
{F}_v]\leq C_3\mathbf{P}[\mathcal {A}_6(1,|v|)]\leq
C|v|^{-2-\varepsilon},
\end{equation*}
which gives
\begin{equation*}
\mathbf{E}[\mbox{the number of bad sites}]=\sum_{v\in \mathbb{V}}
\mathbf{P}[\mbox{$v$ is a bad site}]<\infty.
\end{equation*}
So the number of bad sites is finite with probability one.
\end{proof}

\begin{proof}[Proof of Theorem \ref{t6}]
By Proposition \ref{p5}, the cluster graph has a unique infinite
component $\mathscr{C}$ almost surely.  Therefore, there is a
constant $p_0\in (0,1)$ such that
\begin{equation*}
\mathbf{P}[\mathcal {C}_0\in \mathscr{C}]\geq p_0.
\end{equation*}
Conditioned on the event $\{\mathcal {C}_0\in \mathscr{C}\}$, it is
clear that almost surely $D(\mathcal {C}_0,\mathcal {C}_n)$ is
finite for all $n\in\mathbb{N}$, and similarly to item (i) in
Proposition \ref{p3}, the first-passage time $T(0,\mathcal {C}_n)$
is equal to the maximal disjoint closed circuits surrounding 0 in
the component of $\mathbb{T}\backslash \mathcal {C}_n$ containing 0.

By RSW, it is easy to see that infinitely many of the events
$\{\exists$ an open cluster surrounding 0 in
$A(2^k,2^{k+1})\}_{k\in\mathbb{N}}$ occur almost surely.  This fact
together with Proposition \ref{p7} implies that with probability one
there exists a random $k_0$ (depending on percolation configuration
$\omega$), such that $A(2^{k_0},2^{k_0+1})$ contains an open cluster
surrounding 0, there exist no bad sites outside $B(2^{k_0})$, and
for all $2^{k_0}\leq m\leq n$,
\begin{equation*}
D(\mathcal {C}_m,\mathcal {C}_n)=T(\mathcal {C}_m,\mathcal {C}_n).
\end{equation*}

Define the event
\begin{equation*}
\mathcal {E}_k:=\{T(0,\mathcal {C}_{2^k})-T(0,\partial B(2^k))\geq
k^{1/3}\}.
\end{equation*}
By Lemma \ref{l10} and RSW, it is standard to prove that there exist
$C_1,C_2>0$, such that for all large $k$,
\begin{align*}
&\mathbf{P}[\mathcal {E}_k]\leq \mathbf{P}[\mathcal
{C}_{2^k}\nsubseteq
A(2^k,2^{k+C_1k^{1/3}})]+\mathbf{P}[S(2^k,2^{k+C_1k^{1/3}})\geq
k^{1/3}]\leq \exp(-C_2k^{1/3}).
\end{align*}
Then by Borel-Cantelli lemma, almost surely only finitely many of
$\mathcal {E}_k$'s occur.  So with probability one there exists a
random $k_1$, such that for all $k\geq k_1$, the event $\mathcal
{E}_k$ does not occur.

The arguments above implies that, conditioned on the event
$\{\mathcal {C}_0\in \mathscr{C}\}$, almost surely for all $k\geq
\max\{k_0,k_1\}$,
\begin{equation}\label{e61}
D(\mathcal {C}_0,\mathcal {C}_{2^k})-T(0,\partial B(2^k))=D(\mathcal
{C}_0,\mathcal {C}_{2^{k_0}})+T(\mathcal {C}_{2^{k_0}},\mathcal
{C}_{2^k})-T(0,\partial B(2^k))\leq 2^{k_0+1}+k^{1/3}.
\end{equation}
It is obvious that conditioned on $\{\mathcal {C}_0\in
\mathscr{C}\}$, almost surely for all $k\geq 1$ and $2^k\leq n\leq
2^{k+1}$,
\begin{equation}\label{e62}
D(\mathcal {C}_0,\mathcal {C}_{2^k})\leq D(\mathcal {C}_0,\mathcal
{C}_n)\leq D(\mathcal {C}_0,\mathcal {C}_{2^{k+1}}).
\end{equation}
Combining (\ref{e61}), (\ref{e62}) and (\ref{t41}), we obtain
Theorem \ref{t6}.
\end{proof}

\section{Appendix: Proof of (\ref{e42})}
\begin{proof}[Proof of (\ref{e42})]
The proof is essentially the same as the proof of (2.24) in
\cite{KZ97}.  Recall that for $j\geq 0$,
\begin{equation*}
\widetilde{\Delta}_j(\omega):=\mathbf{E}_p[T(0,\partial
B(L(p)))\mid\widetilde{\mathscr{F}}_j]-\mathbf{E}_p[T(0,\partial
B(L(p)))\mid\widetilde{\mathscr{F}}_{j-1}].
\end{equation*}
First, let us show that for all $j\geq 0$,
\begin{align}
\widetilde{\Delta}_j(\omega)=&T(\widetilde{\mathcal
{C}}_{j-1}(\omega),\widetilde{\mathcal
{C}}_j(\omega))(\omega)+\mathbf{E}_p'[T(\widetilde{\mathcal
{C}}_j(\omega),\partial B(L(p)))(\omega')]\nonumber\\
&-\mathbf{E}_p'[T(\widetilde{\mathcal {C}}_{j-1}(\omega),\partial
B(L(p)))(\omega')].\label{e100}
\end{align}

For $j\geq 1$, observe that
\begin{equation}\label{e101}
T(0,\partial B(L(p)))=T(0,\widetilde{\mathcal
{C}}_{j-1})+T(\widetilde{\mathcal {C}}_{j-1},\widetilde{\mathcal
{C}}_j)+T(\widetilde{\mathcal {C}}_j,\partial B(L(p))).
\end{equation}
Note that $T(0,\widetilde{\mathcal {C}}_{j-1})$ depends only on the
sites in $\overline{\widetilde{\mathcal {C}}_{j-1}}$, and
$T(\widetilde{\mathcal {C}}_{j-1},\widetilde{\mathcal {C}}_j)$
depends only on the sites in $\overline{\widetilde{\mathcal
{C}}_j}\backslash\{\mbox{interior of }\widetilde{\mathcal
{C}}_{j-1}\}$.  So $T(0,\widetilde{\mathcal {C}}_{j-1})$ is
$\widetilde{\mathscr{F}}_{j-1}$-measurable, and
$T(\widetilde{\mathcal {C}}_{j-1},\widetilde{\mathcal {C}}_j)$ is
$\widetilde{\mathscr{F}}_j$-measurable.  Observe that once
$\widetilde{\mathcal {C}}_j$ is fixed, $T(\widetilde{\mathcal
{C}}_j,\partial B(L(p)))$ depends only on sites which lie in
$B(L(p))\backslash \overline{\widetilde{\mathcal {C}}_j}$; given the
configuration of the sites in $\overline{\widetilde{\mathcal
{C}}_j}$, the sites in $B(L(p))\backslash
\overline{\widetilde{\mathcal {C}}_j}$ are conditionally independent
of this configuration.  Therefore,
\begin{equation}\label{e102}
\mathbf{E}_p[T(\widetilde{\mathcal {C}}_j,\partial B(L(p)))\mid
\widetilde{\mathscr{F}}_j](\omega)=\mathbf{E}_p'[T(\widetilde{\mathcal
{C}}_j(\omega),\partial B(L(p)))(\omega')].
\end{equation}
This together with (\ref{e101}) and the preceding remarks gives
\begin{equation}\label{e103}
\mathbf{E}_p[T(0,\partial B(L(p)))\mid
\widetilde{\mathscr{F}}_j](\omega)=T(0,\widetilde{\mathcal
{C}}_{j-1})(\omega)+T(\widetilde{\mathcal
{C}}_{j-1},\widetilde{\mathcal
{C}}_j)(\omega)+\mathbf{E}_p'[T(\widetilde{\mathcal
{C}}_j(\omega),\partial B(L(p)))(\omega')].
\end{equation}
Similarly, we have
\begin{equation}\label{e104}
\mathbf{E}_p[T(0,\partial B(L(p)))\mid
\widetilde{\mathscr{F}}_{j-1}](\omega)=T(0,\widetilde{\mathcal
{C}}_{j-1})(\omega)+\mathbf{E}_p'[T(\widetilde{\mathcal
{C}}_{j-1}(\omega),\partial B(L(p)))(\omega')].
\end{equation}
Then for $j\geq 1$, (\ref{e100}) follows by subtracting (\ref{e104})
from (\ref{e103}).

It remains to show (\ref{e100}) for $j=0$.  Observe that
\begin{equation*}
T(0,\partial B(L(p)))=T(0,\widetilde{\mathcal
{C}}_1)+T(\widetilde{\mathcal {C}}_1,\partial B(L(p))).
\end{equation*}
Essentially the same proof as above shows that
\begin{equation}\label{e105}
\mathbf{E}_p[T(0,\partial B(L(p)))\mid
\widetilde{\mathscr{F}}_1](\omega)=T(0,\widetilde{\mathcal
{C}}_1)(\omega)+\mathbf{E}_p'[T(\widetilde{\mathcal
{C}}_1(\omega),\partial B(L(p)))(\omega')].
\end{equation}
It is clear that
\begin{equation}\label{e106}
\mathbf{E}_p[T(0,\partial B(L(p)))]=\mathbf{E}_p'[T(0,\partial
B(L(p)))].
\end{equation}
Then for $j=0$, (\ref{e100}) follows by subtracting (\ref{e106})
from (\ref{e105}).

Finally, note that for $j\geq 0$,
\begin{align}
&T(\widetilde{\mathcal {C}}_j(\omega),\partial B(L(p)))(\omega')=T(\widetilde{\mathcal {C}}_j(\omega),\widetilde{\mathcal {C}}_{l(j,\omega,\omega')}(\omega'))(\omega')+T(\widetilde{\mathcal {C}}_{l(j,\omega,\omega')}(\omega'),\partial B(L(p)))(\omega'),\label{e107}\\
&T(\widetilde{\mathcal {C}}_{j-1}(\omega),\partial
B(L(p)))(\omega')=T(\widetilde{\mathcal
{C}}_{j-1}(\omega),\widetilde{\mathcal
{C}}_{l(j,\omega,\omega')}(\omega'))(\omega')+T(\widetilde{\mathcal
{C}}_{l(j,\omega,\omega')}(\omega'),\partial
B(L(p)))(\omega').\label{e108}
\end{align}
Substitution of \ref{e107} and (\ref{e108}) into the right hand side
of (\ref{e100}) yields (\ref{e42}).
\end{proof}

\section*{Acknowledgements}
The author thanks Geoffrey Grimmett for an invitation to the
Statistical Laboratory in Cambridge University, and thanks the
hospitality of the Laboratory, where this work was completed.  The
author also thanks an anonymous referee for a detailed report that
contributed to a better presentation of this manuscript.  The author
was supported by the National Natural Science Foundation of China
(No. 11601505), an NSFC grant No. 11688101 and the Key Laboratory of
Random Complex Structures and Data Science, CAS (No. 2008DP173182).

\end{document}